\documentclass{elsarticle}

\usepackage{amsrefs}
\usepackage{amsmath, amssymb}
\usepackage{amsthm}
\usepackage[normalem]{ulem}
\usepackage[all,cmtip]{xy}
\usepackage{graphics}
\usepackage{caption}
\usepackage{subfig}
\usepackage{algorithm}
\usepackage{algorithmic}
\usepackage{booktabs}
\usepackage[usenames, dvipsnames]{color}
\usepackage{cancel}

\def\rmd{{\rm d}}
\def\rmT{{\text{T}}}

\newtheorem{theorem}{{\bf Theorem}}
\newtheorem{corollary}[theorem]{{\bf Corollary}}
\newtheorem{proposition}[theorem]{{\bf Proposition}}
\newtheorem{lemma}[theorem]{{\bf Lemma}}
\newtheorem{conjecture}[theorem]{{\bf Conjecture}}
\theoremstyle{definition}

\newtheorem{example}{{\bf Example}}

\def\bfb{{\boldsymbol{b}}}
\def\bfx{{\boldsymbol{x}}}

\def\bfc{{\boldsymbol{c}}}
\def\bft{{\boldsymbol{t}}}
\def\bfu{{\boldsymbol{u}}}
\def\bfv{{\boldsymbol{v}}}
\def\tbfx{{\tilde{\bfx}}}
\def\ta{{\tilde{a}}}
\def\tb{{\tilde{b}}}
\def\tx{{\bf x}}
\def\ty{{\bf y}}

\def\mA{{\boldsymbol{A}}}
\def\mB{{\boldsymbol{B}}}
\def\mC{{\boldsymbol{C}}}

\def\mM{{\boldsymbol{M}}}
\def\mQ{{\boldsymbol{Q}}}

\def\mI{{\boldsymbol{I}}}

\def\CC{\mathbb{C}}
\def\NN{\mathbb{N}}
\def\QQ{\mathbb{Q}}
\def\RR{\mathbb{R}}

\def\CCC{\mathcal{C}}
\def\PPP{\mathcal{P}}

\def\SSS{\mathcal{S}}

\def\rmd{\mathrm{d}}
\def\Res{\mathrm{Res}}
\def\sgn{\mathrm{sgn}}

\makeatletter
\def\ps@pprintTitle{%
  \let\@oddhead\@empty
  \let\@evenhead\@empty
  \def\@oddfoot{\reset@font\hfil\thepage\hfil}
  \let\@evenfoot\@oddfoot
}
\makeatother

\begin{document}

\begin{frontmatter}
\title{Similarity detection of rational space curves}

\author[a]{Juan Gerardo Alc\'azar\fnref{proy}}
\ead{juange.alcazar@uah.es}
\author[a]{Carlos Hermoso}
\ead{carlos.hermoso@uah.es}
\author[b]{Georg Muntingh}
\ead{georg.muntingh@sintef.no}

\address[a]{Departamento de F\'{\i}sica y Matem\'aticas, Universidad de Alcal\'a,
E-28871 Madrid, Spain}
\address[b]{SINTEF ICT, PO Box 124 Blindern, 0314 Oslo, Norway}

\fntext[proy]{Supported by the Spanish ``Ministerio de
Ciencia e Innovacion" under the Project MTM2014-54141-P. 
Member of the Research Group {\sc asynacs} (Ref. {\sc ccee2011/r34}) }

\begin{abstract}
We provide an algorithm to check whether two rational space curves are related by a similarity. The algorithm exploits the relationship between the curvatures and torsions of two similar curves, which is formulated in a computer algebra setting. Helical curves, where curvature and torsion are proportional, need to be distinguished as a special case. The algorithm is easy to implement, as it involves only standard computer algebra techniques, such as greatest common divisors and resultants, and Gr\"obner basis for the special case of helical curves. Details on the implementation and experimentation carried out using the computer algebra system Maple 18 are provided. 
\end{abstract}

\end{frontmatter}

\section{Introduction}\label{section-introduction}
\noindent Two objects are \emph{similar} when one of them is the result of applying an isometry and scaling to the other. Therefore, two similar objects have the same shape, although their position and size can be different. Because of this, recognizing similar objects is important in the field of Pattern Recognition, where one typically has a database of objects and wants to compare, up to a similarity, a given object with all the elements in the database. 

Three-dimensional similarity detection is also important in Computer Graphics and Computer Vision, and therefore it has been addressed in a long list of papers. Following the introduction of \cite{Chen}, the methods proposed in these papers can be grouped into two different categories: \emph{shape-based} and \emph{topology-based}. In the first category, one picks \emph{feature descriptors} for the objects to be checked, giving rise to feature vectors that are later compared using appropriate metrics; see for instance the survey \cite{B06} or the papers \cite{ank99, koertgen, osada}. In the second category, which has gained attention in recent years, a ``skeleton" is computed from each object, which is later used for comparison purposes; see \cite{H01, Sundar}. The aforementioned papers, and others that can be found in their bibliographies, focus on surfaces, upon which (almost) no structure is assumed. At most, some of these papers require the objects to be modeled by means of polyhedra, so that they are considered to be meshings of perhaps more complex shapes. Additionally, in these references similarity detection is usually considered only up to a certain tolerance, so that the criteria are approximate. 
	
Our approach is different. First, we deal with exact one-dimensional objects with a strong structure, namely rational space curves defined by rational parametrizations. Furthermore, we exploit the structure of the space curves to check, in a deterministic fashion, whether they are similar, and to explicitly compute the similarities between both curves in the affirmative case. In order to do this, we build on previous work on similarities of plane curves \cite{AHM14} and symmetries of plane and space curves \cite{AHM14-2, AHM15}. As in these papers, we exploit the rationality of the curves to reduce the problem to the parameter space. Analogously to the algorithm in \cite{AHM15}, the algorithm in this paper is based on comparing curvatures and torsions. However, similarity has the additional substantial difficulty of determining the scaling. Interestingly, this forces us to distinguish as a special case the \emph{helical curves}, i.e., space curves with proportional curvature and torsion.  

The basic steps in the algorithm are as follows. If the two given rational space curves are similar, then there exists a rational function relating the parameter spaces conforming to the similarity between the ambient spaces of the curves. Under the hypothesis that the parametrizations of the curves are proper, i.e., injective for almost all points, this rational function is a M\"obius transformation. In our algorithm one first computes candidates for the scaling constants and then candidates for the M\"obius transformations. After this, the similarities between the curves can be computed. If the input curves are non-helical, then we have two independent conditions involving the curvatures and torsions of the curves, and from these conditions the scaling constant can be found. If the input curves are helical, then these two conditions are no longer independent, and a different approach based on a procedure in \cite{AHM15} is provided.

As for plane curves \cite[\S 3.5]{AHM14}, the method can be adapted to the case of piecewise rational space curves. Moreover, for a space of properly parametrized curves satisfying affine invariance and uniqueness of the control polygon, we show that detecting similarity of such curve segments reduces to detecting similarity of the control polygons. This includes B\'ezier curves and, under certain conditions, B-spline curves and NURBS curves.

The structure of the paper is as follows. In Section \ref{sec-gen} we provide some background on isometries, similarities, differential invariants and helical curves, and we prove some results that are needed later in the paper. Section \ref{sec:detecting} describes the algorithm for solving the problem, separately considering the case of non-helical and helical curves. In Section \ref{sec:experimentation} we report on the experimentation with the algorithm, implemented in the computer algebra system Maple 18. In Section~\ref{segments} we briefly discuss similarity detection of curve segments. Finally, conclusions and future work are presented in Section \ref{sec:conclusion}. 

\section*{Acknowledgments}
\noindent 
This paper provides a more detailed presentation of the theoretical aspects and a new section on experimentation, compared to the 8-page abridged form published in the proceedings \cite{AHM16} of the ISSAC 2016 meeting, where these results were presented. We are grateful to the audience and referees for their valuable feedback.

\section{Background} \label{sec-gen}

\subsection{Similarities and isometries of Euclidean space}
\noindent A \emph{similarity} of Euclidean space is a linear affine map from the space to itself that preserves ratios of distances. Equivalently, a map $f: \RR^3 \longrightarrow \RR^3$ is a similarity if and only if
\begin{equation}\label{eq:similarity}
f(\tx) = \lambda \mQ \tx + \bfb,\quad 0 \neq \lambda \in \RR,\ \bfb\in \RR^3,\ \mQ\in \RR^{3\times 3},\ \mQ^\rmT\mQ = \mI,\ \det(\mQ) = 1,
\end{equation}
where the latter two conditions mean that $\mQ$ is a special orthogonal matrix, i.e., a rotation about a line. Equivalently, with $\|\tx\|$ denoting the Euclidean norm of $\tx$ and $d(\tx,\ty) := \|\tx - \ty\|$ the Euclidean distance,
\begin{equation}\label{eq:similarity2}
d\big(f(\tx),f(\ty)\big) = |\lambda|\cdot d(\tx,\ty),\qquad \tx, \ty\in \CC^3.
\end{equation}
We refer to $\lambda$ as the \emph{(signed) ratio} of the similarity. A similarity is said to \emph{preserve} the orientation if $\lambda > 0$, and \emph{reverse} the orientation if $\lambda < 0$.  The identity map $f(\tx) = \tx$ is called the \emph{trivial similarity}.

If $|\lambda| = 1$ then $f$ is an \emph{(affine) isometry}, i.e., $f$ preserves distances. The classification of nontrivial isometries includes reflections (in a plane), rotations (about an axis), and translations, and these combine in commutative pairs to form twists, glide reflections, and rotatory reflections. More precisely, a \emph{twist} is the composition of a rotation about an axis and a translation in the direction of a vector parallel to this axis, while a \emph{glide reflection} is the composition of a reflection in a plane and a translation in the direction of a vector parallel to this plane. A composition of three reflections in mutually perpendicular planes through a point $\tx$ yields a \emph{central inversion} (with respect to the point $\tx$). The particular case of rotation by an angle $\pi$ is of special interest, and it is called a \emph{half-turn}.

If $\lambda$ is not an eigenvalue of $\mQ$, then $f$ has a unique fixed point $\bfc := (\mI - \lambda \mQ)^{-1}\bfb$, called the \emph{center} of the similarity. In particular any similarity that is not an isometry has a center, because $\mQ$, being orthogonal, has eigenvalues of modulus equal to 1. A \emph{dilatation} is a special type of similarity, defined as a map that sends any line to a parallel line (which could be the original line). Any dilatation that is not a translation sends any point $\tx$ to $\bfc + \lambda(\tx - \bfc)$ and therefore takes the form $f(\tx) = \lambda \mI \tx + (1-\lambda)\bfc$. A \emph{dilative rotation} is a composition of a dilatation $f$ with center $\bfc$ with a rotation $\mQ$ about a line $\ell$ containing $\bfc$, which takes the form
\begin{equation}\label{eq:dilativerotation}
f(\mQ \tx) = \mQ f(\tx) = \lambda \mQ \tx + (1-\lambda)\bfc,\qquad \mQ\bfc = \bfc.
\end{equation}
We recall the following characterization of similarities from \cite[p. 103]{Coxeter}.

\begin{theorem}\label{thm:similarityclassification}
Any similarity is either an isometry or a dilative rotation.
\end{theorem}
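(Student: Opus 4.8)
The plan is to split the argument according to the value of $|\lambda|$ in the normal form \eqref{eq:similarity}. By the distance identity \eqref{eq:similarity2}, if $|\lambda| = 1$ then $f$ preserves distances and is therefore an isometry, so nothing is left to prove. Hence I would assume from now on that $|\lambda| \neq 1$ and aim to show that $f$ is a dilative rotation.

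First I would produce the center. Since $\mQ$ is orthogonal, all of its eigenvalues have modulus $1$, so the real number $\lambda$ with $|\lambda| \neq 1$ is not an eigenvalue of $\mQ$; consequently $\mI - \lambda\mQ$ is invertible and $f$ has the unique fixed point $\bfc = (\mI - \lambda\mQ)^{-1}\bfb$, exactly as recorded in the excerpt. Substituting $\bfb = (\mI - \lambda\mQ)\bfc$ back into \eqref{eq:similarity} and rearranging yields the centered identity $f(\tx) = \bfc + \lambda\mQ(\tx - \bfc)$, which I would take as the key step.

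Finally I would read off the dilative rotation structure from this identity. Because $\mQ$ is special orthogonal in $\RR^3$ it is a rotation about a line, i.e.\ there is a unit vector $\bfv$ with $\mQ\bfv = \bfv$; let $\ell$ be the line through $\bfc$ with direction $\bfv$. Then $f$ is the composition of the dilatation $\tx \mapsto \bfc + \lambda(\tx - \bfc)$ of center $\bfc$ and ratio $\lambda$ with the rotation of linear part $\mQ$ about $\ell$. I would verify that these two maps commute, most cleanly by translating $\bfc$ to the origin: there the dilatation becomes the scalar map $\tx \mapsto \lambda\tx$, which visibly commutes with the linear map $\mQ$, and the center lies on the axis so that \eqref{eq:dilativerotation} holds verbatim. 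This exhibits $f$ as a dilative rotation and completes the dichotomy.

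The main obstacle I anticipate is matching a general similarity to the normalized form \eqref{eq:dilativerotation}: the fixed point $\bfc$ need not lie on the rotation axis of $\mQ$ through the origin, so the commuting rotation must be taken about the parallel axis $\ell$ through $\bfc$ rather than about the linear axis of $\mQ$ itself. Passing to coordinates centered at $\bfc$ removes this friction, since there the center sits on the axis and the condition $\mQ\bfc = \bfc$ of \eqref{eq:dilativerotation} is satisfied trivially. I would also note that a negative ratio $\lambda < 0$ still defines a (orientation-reversing) dilatation, so this single argument covers the orientation-reversing case as well.
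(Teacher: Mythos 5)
Your proposal is correct. Be aware, however, that the paper contains no proof of Theorem~\ref{thm:similarityclassification} to compare against: it is recalled verbatim from Coxeter \cite[p.~103]{Coxeter}, where the argument is carried out synthetically within the classical theory of isometries and dilatations. Your argument is therefore genuinely different in character: it is a short, self-contained linear-algebra proof tailored to the paper's normal form \eqref{eq:similarity}. The dichotomy on $|\lambda|$ via \eqref{eq:similarity2}, the invertibility of $\mI - \lambda\mQ$ (the eigenvalues of the orthogonal matrix $\mQ$ have modulus $1$, so a real $\lambda$ with $|\lambda| \neq 1$ cannot be among them), and the centered identity $f(\tx) = \bfc + \lambda\mQ(\tx - \bfc)$ are all sound, and Euler's theorem (equivalently, the paper's own gloss that $\mQ$ with $\det(\mQ)=1$, $\mQ^\rmT\mQ = \mI$ is ``a rotation about a line'') supplies the axis direction $\bfv$ with $\mQ\bfv = \bfv$. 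You are also right to flag the one genuine friction point: the fixed point $\bfc = (\mI - \lambda\mQ)^{-1}\bfb$ need in general \emph{not} satisfy $\mQ\bfc = \bfc$, so \eqref{eq:dilativerotation} as printed is a normalized form, valid only once the origin is placed on the axis; your resolution --- taking the rotation to be the affine map $\tx \mapsto \bfc + \mQ(\tx - \bfc)$ about the parallel line $\ell$ through $\bfc$, and checking commutativity after translating $\bfc$ to the origin, where the factors have commuting linear parts $\lambda\mI$ and $\mQ$ --- is exactly what is needed. Two small points worth making explicit if this were written out in full: the degenerate case $\mQ = \mI$ yields a pure dilatation, which counts as a dilative rotation with trivial rotation (any line through $\bfc$ serves as axis), and, as you note, $\lambda < 0$ still gives a dilatation in the paper's sense, since a negative-ratio homothety maps every line to a parallel line; this matters because in the convention \eqref{eq:similarity} orientation reversal is carried by the sign of $\lambda$ rather than by $\det(\mQ)$. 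In sum, the citation keeps the paper lean, while your proof buys self-containedness, compatibility with the matrix formalism used throughout Section~\ref{sec-gen}, and uniform treatment of the orientation-reversing case.
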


Similarities form a group under composition, and isometries form a subgroup of this group.

\subsection{Similarities and symmetries of rational space curves} \label{sec-prelim}
\noindent Theoretical aspects of the equivalence problem for space curves can be traced back to \'Elie Cartan \cite{Cartan}. Here we consider this problem for two rational space curves $\CCC_1, \CCC_2 \subset \RR^3$, neither lines nor circles, assumed to be nonplanar unless specified otherwise. Such curves are irreducible and can be parametrized by rational maps
\begin{equation}\label{eq:parametrizations}
{\bfx}_j: \RR \dashrightarrow \CCC_j\subset \RR^3, \qquad \bfx_j(t) = \big( x_j(t), y_j(t), z_j(t) \big),\qquad j = 1,2.
\end{equation}
As the components $x_j, y_j, z_j$ of $\bfx_j$ are rational functions of $t$ with real coefficients, they are defined for all but a finite number of values of $t$. We assume that the parametrizations \eqref{eq:parametrizations} are \emph{proper}, i.e., birational or, equivalently, injective except for perhaps finitely many values of $t$. This can be assumed without loss of generality, since any rational curve can quickly be properly reparametrized. For these claims and other results on properness, the interested reader can consult \cite{SWPD} for plane curves and \cite[\S 3.1]{A12} for space curves. We also assume that the numerators and denominators of the components of $\bfx_j$ are relatively prime. 

This paper concerns algebraic space curves \eqref{eq:parametrizations} that are \emph{similar}, i.e., one is the image of the other under a similarity. We say that $\CCC_1$ and $\CCC_2$ are \emph{related by a similarity $f$} when $f(\CCC_1) = \CCC_2$. We first establish some basic properties. 

\begin{lemma}[See {\cite[Lemma 1]{AHM15}}]\label{lem:invariantisometries}
A rational space curve different from a line cannot be invariant under a translation, glide reflection, or twist.
\end{lemma}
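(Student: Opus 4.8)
The key observation is that each of these three isometries (translation, glide reflection, twist) contains a nontrivial translational component, and I want to show that a curve invariant under such a map would have to be unbounded in a way incompatible with being a rational curve different from a line. The strategy is to exploit the rational parametrization: if the curve $\CCC$ is invariant under an isometry $f$, then $f$ induces a reparametrization of $\CCC$, i.e., there is a rational function $\varphi$ (in fact a M\"obius transformation, by properness) with $f(\bfx(t)) = \bfx(\varphi(t))$ for almost all $t$. The plan is to iterate $f$ and derive a contradiction from the translational part.

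First I would treat the pure translation case, since glide reflections and twists both reduce to it. Suppose $f(\tx) = \tx + \bfv$ with $\bfv \neq \boldsymbol{0}$ and $f(\CCC) = \CCC$. Then for every positive integer $n$, the iterate $f^n(\tx) = \tx + n\bfv$ also leaves $\CCC$ invariant. Pick any point $\bfx(t_0)$ on the curve; the points $\bfx(t_0) + n\bfv$ for $n = 1, 2, 3, \ldots$ all lie on $\CCC$ and are distinct, so $\CCC$ contains infinitely many collinear points lying on the line $\bfx(t_0) + \RR\bfv$. But $\CCC$ is an irreducible algebraic curve, so if it meets a line in infinitely many points it must \emph{be} that line, contradicting the hypothesis that $\CCC$ is not a line. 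This handles translations cleanly.

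Next I would reduce the glide reflection and twist to the translation case by passing to $f^2$. A glide reflection is a reflection in a plane composed with a translation by a vector $\bfv$ parallel to that plane; a twist is a rotation about an axis composed with a translation by a vector $\bfv$ parallel to that axis. In both cases, the orthogonal part $\mQ$ fixes $\bfv$ (i.e. $\mQ\bfv = \bfv$), and squaring the map gives $f^2(\tx) = \mQ^2\tx + (\mQ + \mI)\bfv$. For the glide reflection $\mQ$ is a reflection so $\mQ^2 = \mI$ and $\mQ\bfv = \bfv$, whence $f^2(\tx) = \tx + 2\bfv$, a nontrivial translation. For the twist, $\mQ^2$ is again a rotation about the same axis and $(\mQ+\mI)\bfv = 2\bfv \neq \boldsymbol{0}$, so $f^2$ is a twist with translational part $2\bfv$; iterating $f^{2n}$ still pushes a fixed starting point arbitrarily far along the axis direction, and the same collinearity-with-a-line argument applies (the projections onto the axis of the points $f^{2n}(\bfx(t_0))$ are unbounded, so $\CCC$ is unbounded along the axis, yet an irreducible algebraic curve meeting infinitely many parallel planes forces the line argument through the translational invariance of $f^2$). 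Since $f(\CCC) = \CCC$ implies $f^2(\CCC) = \CCC$, invariance under $f$ yields invariance under a nontrivial translation, which is impossible by the first case.

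The main obstacle is making the ``unboundedness contradicts algebraicity'' step rigorous without hand-waving: I must argue that a real irreducible algebraic curve cannot contain an infinite arithmetic progression of collinear points unless it coincides with the line. The clean way is algebraic rather than metric: the translation (or $f^2$) induces a M\"obius automorphism $\varphi$ of the parameter line with $\bfx \circ \varphi = f \circ \bfx$, and iterating produces infinitely many distinct M\"obius transformations fixing the (finite) set of base points of $\bfx$ unless $\varphi$ has finite order; but a finite-order $\varphi$ would force $f$ to have finite order, contradicting that a nontrivial translation has infinite order. Balancing the parametrization bookkeeping against the geometric iteration is where the care is needed, but both routes terminate in the same contradiction.
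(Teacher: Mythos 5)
The paper itself does not prove this lemma but imports it verbatim from \cite{AHM15}*{Lemma 1}, so your attempt can only be judged on its own merits. Your first two cases are sound: for a translation $f(\tx)=\tx+\bfv$, the points $\bfx(t_0)+n\bfv$ are infinitely many distinct points of $\CCC$ on the line $\bfx(t_0)+\RR\bfv$, and an irreducible algebraic curve meeting a line in infinitely many points equals that line; and a glide reflection correctly reduces to this via $f^2(\tx)=\tx+2\bfv$. The genuine gap is the twist. For $f(\tx)=\mQ\tx+\bfv$ with $\mQ$ a rotation by angle $\theta$ about an axis parallel to $\bfv$, the orbit points $f^{2n}\big(\bfx(t_0)\big)=\mQ^{2n}\bfx(t_0)+2n\bfv$ are \emph{not} collinear unless $\bfx(t_0)$ lies on the axis; they lie on a circular cylinder around the axis. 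So ``the same collinearity-with-a-line argument applies'' is simply false, and your fallback observations yield no contradiction: being unbounded along the axis and meeting infinitely many parallel planes are properties of essentially every unbounded space curve, e.g.\ the twisted cubic $(t,t^2,t^3)$. Your reduction does succeed when $\theta$ is a rational multiple of $2\pi$, since then some power $f^{2m}$ is a pure translation by $2m\bfv\neq\boldsymbol{0}$; but the twist with $\theta/2\pi\notin\QQ$ --- which is the only genuinely hard case of the lemma --- is left unproved.

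Your closing M\"obius sketch does not close this gap either. The assertion that the iterates $\varphi^n$ must fix ``the finite set of base points of $\bfx$'' is unsupported, and even granting infinitely many distinct M\"obius transformations you never derive a contradiction from their existence: finiteness of the group of such $\varphi$ for a curve other than a line is essentially equivalent to what is being proved, so invoking it would be circular. (The one implication you do state, that $\varphi$ of finite order forces $f$ of finite order, is the easy and unneeded direction.) A concrete repair for the irrational-angle twist: the infinitely many orbit points force the irreducible curve $\CCC$ onto the cylinder through $\bfx(t_0)$, since an irreducible algebraic curve meeting an algebraic surface in infinitely many points is contained in it; one can then argue, e.g., that the Zariski closure of the infinite cyclic group generated by $f$ inside the one-parameter twist group is the whole group, so $\CCC$ would be invariant under a one-dimensional group acting with two-dimensional orbits, contradicting $\dim\CCC=1$. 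Alternatively, one can run your parameter-space idea properly: the functional equation $\bfx\circ\varphi=\mQ\,\bfx+\bfv$ confines the poles of the coordinate functions to the fixed points of $\varphi$; in the elliptic case the real orbits of $\varphi$ are dense in $\mathbb{P}^1(\RR)$, so the axis-coordinate values $h_0+nv$ would accumulate at a finite value, a contradiction, while in the parabolic and hyperbolic cases the resulting difference equations for (Laurent) polynomial coordinates force $\CCC$ to be a line. Either route needs to be carried out; as written, the twist case fails.
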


Therefore, reflections, rotations, and their combinations are the only isometries that leave a rational space curve different from a line invariant.

\begin{lemma} \label{center}
Let $f$ be a nontrivial similarity that is not an isometry, leaving an algebraic space curve $\CCC$ invariant. Then its center $\bfc$ is a point of $\CCC$. 
\end{lemma}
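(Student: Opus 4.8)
The plan is to exploit the contracting dynamics of $f$ near its center, combined with the fact that $\CCC$, being an algebraic set, is closed in the Euclidean topology. First I would invoke Theorem~\ref{thm:similarityclassification}: since $f$ is a nontrivial similarity that is not an isometry, it must be a dilative rotation, so by \eqref{eq:dilativerotation} it can be written as $f(\tx) = \lambda\mQ\tx + (1-\lambda)\bfc$ with $\mQ\bfc = \bfc$, $\mQ^\rmT\mQ = \mI$, and $|\lambda|\neq 1$. Because $f$ is an affine bijection of $\RR^3$ satisfying $f(\CCC) = \CCC$, we also have $f^{-1}(\CCC)=\CCC$; moreover $f^{-1}$ is again a dilative rotation with the same center $\bfc$ and ratio $\lambda^{-1}$. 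Hence, replacing $f$ by $f^{-1}$ if necessary, there is no loss of generality in assuming $|\lambda| < 1$.

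Next I would make the iteration explicit. Using $\mQ\bfc=\bfc$, a direct computation gives $f(\tx) - \bfc = \lambda\mQ(\tx - \bfc)$, and iterating yields $f^n(\tx) - \bfc = \lambda^n\mQ^n(\tx - \bfc)$ for every $n\ge 1$. Since $\mQ$ is orthogonal and hence norm-preserving, taking Euclidean norms produces the estimate $\|f^n(\tx) - \bfc\| = |\lambda|^n\,\|\tx - \bfc\|$. As $|\lambda|<1$, this shows that $f^n(\tx)\to\bfc$ as $n\to\infty$ for every starting point $\tx\in\RR^3$.

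Finally I would combine invariance with closedness. Pick any real point $P\in\CCC$; such a point exists because $\CCC$ is parametrized by a real rational map and therefore has infinitely many real points. Since $f(\CCC)=\CCC$, the entire orbit $\{f^n(P):n\ge 0\}$ lies on $\CCC$, and by the previous step $f^n(P)\to\bfc$. As $\CCC$ is an algebraic variety, its set of real points is closed in the Euclidean topology, so the limit of a convergent sequence of points of $\CCC$ again belongs to $\CCC$. Therefore $\bfc\in\CCC$, as claimed.

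The step I expect to require the most care is the last one: the conclusion rests on $\CCC$ being Euclidean-closed, which is exactly where the word \emph{algebraic} is essential. Were $\CCC$ merely the image of the rational parametrization \eqref{eq:parametrizations}, it could fail to contain finitely many of its limit points and the argument would break down; interpreting $\CCC$ as the closed real algebraic variety cut out by its implicit equations removes this difficulty, after which the remaining computations are routine.
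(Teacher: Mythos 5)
Your proof is correct and follows essentially the same route as the paper's: reduce to $|\lambda|<1$ by replacing $f$ with $f^{-1}$ if necessary, show that the orbit of any point of $\CCC$ contracts to the fixed point $\bfc$, and conclude from the Euclidean closedness of the algebraic curve. The only cosmetic difference is that you derive the contraction from the dilative-rotation normal form of Theorem~\ref{thm:similarityclassification}, whereas the paper obtains it directly from the metric identity \eqref{eq:similarity2} together with $f(\bfc)=\bfc$, making the appeal to the classification theorem unnecessary.
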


\begin{proof}
Since $f$ is not an isometry, $|\lambda| \neq 1$. If $|\lambda| > 1$ then $f^{-1}$ is a similarity with ratio $\lambda^{-1}$ satisfying $|\lambda ^{-1}| < 1$, also leaving $\CCC$ invariant. Therefore we can and will assume $|\lambda| < 1$. Let $\tx\in \CCC$. Since $f(\CCC)=\CCC$, the entire orbit $\{ \tx, f(\tx), f^2(\tx), \ldots \} \subset \CCC$. Using \eqref{eq:similarity2}, $|\lambda| < 1$, and $f(\bfc) = \bfc$, we have
\[ \lim_{k\to \infty} d\big(\bfc, f^k (\tx)\big) = \lim_{k\to \infty} |\lambda|^k d(\bfc, \tx) = 0,\]
and $f^k(\tx)$ approaches $\bfc$. Since $\CCC \subset \RR^3$ is closed, this limit must be a point of~$\CCC$.
\end{proof} 

In addition, note that $\bfc$ is not an isolated point, since it is the limit of a sequence of points of $\CCC$.

\begin{lemma}\label{lem-comp}
Let $f$ be a similarity that is not an isometry. Then for any positive integer $n$, the $n$-fold composition $f^n$ is not an isometry either.
\end{lemma}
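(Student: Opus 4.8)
The plan is to compute the ratio of the composition $f^n$ directly and show it cannot have modulus $1$. By \eqref{eq:similarity}, write $f(\tx) = \lambda \mQ \tx + \bfb$ with $0 \neq \lambda \in \RR$, $\mQ$ special orthogonal, and $|\lambda| \neq 1$ since $f$ is not an isometry. The key observation is the characterization \eqref{eq:similarity2}: a similarity has ratio (in absolute value) equal to the factor by which it scales all distances. Composing $f$ with itself scales distances by $|\lambda|$ each time, so $f^n$ scales distances by $|\lambda|^n$.

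Concretely, I would argue as follows. For any $\tx, \ty \in \CC^3$, applying \eqref{eq:similarity2} repeatedly gives
\[
d\big(f^n(\tx), f^n(\ty)\big) = |\lambda| \cdot d\big(f^{n-1}(\tx), f^{n-1}(\ty)\big) = \cdots = |\lambda|^n \cdot d(\tx, \ty).
\]
Since $f^n$ is again a similarity (similarities form a group under composition, as noted after Theorem~\ref{thm:similarityclassification}), its signed ratio $\mu$ satisfies $|\mu| = |\lambda|^n$ by the same characterization \eqref{eq:similarity2}. Because $f$ is not an isometry, $|\lambda| \neq 1$, and therefore $|\lambda|^n \neq 1$ for every positive integer $n$. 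Hence $|\mu| \neq 1$, which means $f^n$ is not an isometry.

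There is no real obstacle here: the statement follows immediately from the multiplicativity of the distance-scaling factor under composition, which is exactly what \eqref{eq:similarity2} encodes. The only point requiring a word of care is the justification that $f^n$ is itself a similarity, so that the notion of its ratio is well-defined and the characterization \eqref{eq:similarity2} applies to it; this is guaranteed by the group structure of similarities. Alternatively, one could verify directly from \eqref{eq:similarity} that the linear part of $f^n$ equals $\lambda^n \mQ^n$, with $\mQ^n$ again special orthogonal, so that $f^n$ has ratio $\lambda^n$ of modulus $|\lambda|^n \neq 1$.
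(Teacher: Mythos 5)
Your proof is correct and is essentially the paper's: the paper simply notes from \eqref{eq:similarity} that $f^n$ is a similarity of ratio $\lambda^n$, so $|\lambda|\neq 1$ gives $|\lambda^n|\neq 1$, which is exactly your closing ``alternative'' computation, while your main argument via iterating \eqref{eq:similarity2} is just the metric restatement of the same fact. Nothing is missing; both versions are complete one-line arguments.
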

\begin{proof} By \eqref{eq:similarity}, $f^n$ is a similarity of ratio $\lambda^n$, and $|\lambda|\neq 1$ implies $|\lambda^n|\neq 1$.
\end{proof}

\begin{lemma}\label{lem-inv}
Let $f$ be a similarity such that there exist distinct vectors $\tx,\ty$ with $d\big(f(\tx),f(\ty)\big)=d(\tx,\ty)$. Then $f$ is an isometry.
\end{lemma}

\begin{proof} Since $d(\tx, \ty) = d\big(f(\tx),f(\ty)\big)\neq 0$, Equation \eqref{eq:similarity2} implies $|\lambda| = 1$.
\end{proof}

Analogous to \cite[Proposition 2]{AHM14} for algebraic plane curves, the following theorem states that a self-similarity of an algebraic space curve is an isometry.

\begin{theorem} \label{preserve}
Let $f$ be a similarity that leaves an algebraic space curve $\CCC$, which is not a union of (possibly complex) concurrent lines, invariant. Then $f$ is an isometry.
\end{theorem}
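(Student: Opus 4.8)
The plan is to argue by contradiction: suppose $f$ is a similarity leaving $\CCC$ invariant that is \emph{not} an isometry, and derive that $\CCC$ must be a union of (possibly complex) concurrent lines. By \eqref{eq:similarity2} the ratio satisfies $|\lambda|\neq 1$, and replacing $f$ by $f^{-1}$ if necessary (which also leaves $\CCC$ invariant and, having ratio $\lambda^{-1}$, is again not an isometry) I may assume $|\lambda|<1$. By Theorem \ref{thm:similarityclassification} the map $f$ is a dilative rotation, and by Lemma \ref{center} its center $\bfc$ lies on $\CCC$; choosing coordinates with origin at $\bfc$, I can write $f(\tx)=\mA\tx$ with $\mA:=\lambda\mQ$. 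The decisive structural observation is that, since $\mQ\in\RR^{3\times3}$ is a rotation with eigenvalues $1,e^{\pm i\theta}$, the matrix $\mA$ has eigenvalues $\lambda,\lambda e^{\pm i\theta}$, \emph{all of the same modulus} $|\lambda|<1$; the same holds for every power $\mA^N$, whose eigenvalues are $\lambda^N,\lambda^N e^{\pm iN\theta}$.

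First I would reduce to studying the local behaviour of $\CCC$ at $\bfc$. Since $\CCC$ has finitely many irreducible components and $f$ permutes them, a suitable power $f^N$ fixes each component; by Lemma \ref{lem-comp} this power is still not an isometry. For any point $\tx$ on a component $\CCC_i$, the orbit $f^{Nk}(\tx)=\mA^{Nk}\tx$ converges to $\bfc$ because $|\lambda|<1$, and since $\CCC_i$ is closed this forces $\bfc\in\CCC_i$; thus every component passes through $\bfc$. Enlarging $N$ so that $f^N$ additionally fixes each local branch (place) of $\CCC$ at $\bfc$ (again possible, as $f$ permutes the finitely many branches there), I reduce the theorem to showing that every such branch lies on a line through $\bfc$.

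The heart of the argument is this local analysis. Fixing a branch with local parametrization $\gamma(t)$, $\gamma(0)=\bfc=\mathbf{0}$, the invariance gives $\mA^N\gamma(t)=\gamma(\phi(t))$ for some reparametrization $\phi(t)=\beta t+\cdots$ with $\beta\neq 0$. Decomposing $\gamma=\sum_e g_e(t)\bfv_e$ in an eigenbasis $\{\bfv_e\}$ of $\mA^N$ with eigenvalues $\rho_e$ yields the scalar relations $\rho_e\,g_e(t)=g_e(\phi(t))$. Comparing lowest-order terms, a nonzero component of order $q_e$ forces $\beta^{q_e}=\rho_e$, hence $|\beta|^{q_e}=|\lambda|^N$; since all $|\rho_e|$ coincide, every surviving component must have the \emph{same} order, and matching the leading coefficient vector with the eigenvector relation $\mA^N\bfa_k=\beta^k\bfa_k$ shows that, when the eigenvalues $\rho_e$ are distinct, only one component can be nonzero. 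Consequently $\gamma(t)=g_{e_0}(t)\bfv_{e_0}$ lies on the eigenline $\CC\bfv_{e_0}$ through $\bfc$ — a (possibly complex) line. The lone exception is when $\mA^N$ is a scalar matrix, i.e.\ $\mQ$ has finite order ($\theta$ a rational multiple of $\pi$): then $f^N$ is a homothety, and the cyclic orbit of any point of $\CCC$ is an infinite subset of the line joining it to $\bfc$, forcing that whole line into $\CCC$, so the conclusion holds directly.

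Finally I would globalize: a branch contained in a line $L$ meets $L$ in infinitely many points, so the irreducible component carrying it, being one-dimensional, must equal $L$. Hence every component of $\CCC$ is a (possibly complex) line through $\bfc$, so $\CCC$ is a union of concurrent lines, contradicting the hypothesis; therefore $f$ is an isometry. I expect the main obstacle to be precisely the local branch analysis — making the eigenvalue--modulus comparison rigorous for complex places and correctly isolating the homothety case $\mQ^N=\mI$. It is worth noting that the phrase ``possibly complex'' in the statement is exactly what accommodates the complex eigenlines $\CC\bfv_{\pm}$ produced by this argument.
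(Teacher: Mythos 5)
Your proposal is correct, but it takes a genuinely different route from the paper's proof. The paper reduces the space problem to its plane analogue \cite[Proposition 2]{AHM14}: writing $f$ as a dilative rotation about an axis $\ell$ through $\bfc$ (Theorem \ref{thm:similarityclassification}), it first handles planar non-line components by noting some power $f^n$ fixes such a component and restricts to a plane self-similarity of the plane it spans, and otherwise projects $\CCC$ orthogonally onto the plane $\Pi$ through $\bfc$ normal to $\ell$; since $f$ commutes with this projection, $f|_\Pi$ is a plane self-similarity of the projected curve $\CCC^\perp$, hence an isometry by the plane result, and $f$ itself is an isometry via Lemmas \ref{lem-inv} and \ref{lem-comp}. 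You instead give a self-contained dynamical argument at the fixed point: the contraction forces every component (indeed every branch) through $\bfc$, and your eigenvalue--modulus comparison on local branches --- all eigenvalues of $\mA^N = \lambda^N \mQ^N$ have the common modulus $|\lambda|^N \neq 1$, while a surviving eigencomponent of lowest order $q_e$ forces $\beta^{q_e} = \rho_e$, so all surviving orders coincide and distinctness of the $\rho_e$ kills all but one component --- pins each branch to a single (possibly complex) eigenline, which must then be an entire line component of $\CCC$. What your approach buys is independence from the plane-curve black box (your argument in fact reproves the plane case) and a sharper structural conclusion; what it costs is local analytic machinery (primitive branch parametrizations, the conjugating germ $\phi$, working in the complexification), which the paper's projection trick avoids entirely. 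Two small points to tighten: your dichotomy ``distinct eigenvalues versus scalar $\mA^N$'' is not exhaustive for a \emph{fixed} $N$, since $e^{iN\theta} = -1$ gives eigenvalues $\lambda^N, -\lambda^N, -\lambda^N$ with $\mA^N$ not scalar; but non-distinctness forces $e^{2iN\theta} = 1$, so $\mQ$ has finite order and replacing $N$ by a suitable multiple lands you in your scalar (homothety) case, consistent with your parenthetical remark. Also, your worry about needing a Koenigs-type linearization is unfounded: the leading-order comparison already shows all eigencomponents but one vanish \emph{identically}, so the branch lies exactly on the eigenline $\CC\bfv_{e_0}$ with no normalization of $\phi$ required.
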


\begin{proof}
Suppose $f$ is not an isometry. By Theorem \ref{thm:similarityclassification}, the similarity is a dilative rotation $f(\tx) = \lambda \mQ \tx + (1-\lambda)\bfc$, with $|\lambda|\neq 1$ and $\mQ$ a rotation about a line $\ell$ containing $\bfc$. Let $\Pi$ be the plane through $\bfc$ normal to $\ell$. 

Since $f$ maps lines through $\bfc$ to each other, we can assume without loss of generality that $\CCC$ has no such components. First consider the case that $\CCC$ has one or more planar irreducible components that are not a real or complex line. Since a similarity maps planes to planes, one of these components $\CCC'\subset \CCC$ satisfies $f^n(\CCC') = \CCC'$ for some integer $n \geq 1$. Since $\CCC'$ is not a line, it spans a plane $\Pi'$ and $f^n$ restricts to a plane similarity $f' := f^n|_{\Pi'}$ with $f'(\CCC') = \CCC'$, which is an isometry by \cite[Proposition 2]{AHM14}. Hence $f^n$ is an isometry by Lemma \ref{lem-inv} and $f$ is an isometry by Lemma \ref{lem-comp}.

It remains to show the case where $\CCC$ does not have any planar irreducible components besides lines. Supposing $f$ is not an isometry, $\CCC$ cannot contain a line $L$ parallel to $\ell$, because then it would also contain any parallel line $f^n(L),n\in \NN$, of which there are infinitely many since each has a different distance to $\ell$. Therefore the image $\CCC^\perp$ of the orthogonal projection $p: \CCC\longrightarrow \Pi$ is a plane curve. Since $\CCC$ does not have any planar components, $\CCC^\perp$ does not have any lines. Moreover,
\[ f (\CCC^\perp) = f\circ p (\CCC) = p\circ f(\CCC) = p\circ \CCC = \CCC^\perp, \]
showing that the restriction $f|_\Pi$ is a plane similarity that leaves $\CCC^\perp$ invariant. It follows that $f|_\Pi$ is an isometry by \cite[Proposition 2]{AHM14} and that $f$ is an isometry by Lemma \ref{lem-inv}.
\end{proof}

A nontrivial isometry $f$ leaving an algebraic space curve $\CCC$ invariant is called a \emph{symmetry} of $\CCC$. The curve $\CCC$ is called \emph{symmetric} if it has a symmetry. For a background on symmetries of rational space curves, see \cite{AHM14-2, AHM15}. Analogously, two curves $\CCC_1, \CCC_2$ are said to be \emph{similar} if there exists a similarity $f$ such that $f(\CCC_1) = \CCC_2$. 

Although we state and prove the following result in the irreducible setting, which is the case for the rational curves studied in this paper, an analogous statement holds for reducible curves.
\begin{corollary} \label{uniqeness}
Let $\CCC_1,\CCC_2$ be similar irreducible algebraic space curves, neither a line or a circle. There are finitely many similarities $f$ such that $f(\CCC_1) = \CCC_2$. Moreover, such a similarity $f$ is unique if and only if $\CCC_1,\CCC_2$ are not symmetric.
\end{corollary}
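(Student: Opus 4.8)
The plan is to reduce the statement to a question about the self-isometries of a single curve, for which Theorem~\ref{preserve} is tailor-made. Since $\CCC_1$ and $\CCC_2$ are similar, fix one similarity $f_0$ with $f_0(\CCC_1)=\CCC_2$. For any similarity $f$ with $f(\CCC_1)=\CCC_2$, the composition $g:=f_0^{-1}\circ f$ satisfies $g(\CCC_1)=\CCC_1$, so $g$ is a self-similarity of $\CCC_1$. As $\CCC_1$ is irreducible and not a line, it is not a union of concurrent lines, so Theorem~\ref{preserve} forces $g$ to be an isometry. Conversely, for any self-isometry $g$ of $\CCC_1$ the map $f_0\circ g$ is a similarity sending $\CCC_1$ to $\CCC_2$. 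Writing $G$ for the group of self-isometries of $\CCC_1$ (the identity together with the symmetries of $\CCC_1$), we conclude that the set of similarities carrying $\CCC_1$ to $\CCC_2$ is exactly the coset $f_0\cdot G$.

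The finiteness assertion is thus equivalent to the finiteness of $G$, i.e., to the finiteness of the symmetry group of $\CCC_1$, which is known for rational space curves that are neither lines nor circles \cite{AHM15}. For a self-contained justification, I would argue that $G$ is a closed subgroup of the isometry group of $\RR^3$ (stabilizers of closed sets are closed), which by Lemma~\ref{lem:invariantisometries} contains no translation, glide reflection, or twist; its elements are therefore isometries with fixed points. Were $G$ infinite it could not be discrete, and its identity component would supply a one-parameter group of rotations about a fixed axis $\ell$ leaving $\CCC_1$ invariant. The orbit under this group of any point of $\CCC_1$ off $\ell$ is a circle contained in $\CCC_1$, which by irreducibility forces $\CCC_1$ to equal that circle, contrary to hypothesis. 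Hence $G$ is finite and the coset $f_0\cdot G$ is finite.

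For the uniqueness criterion, observe that $|f_0\cdot G|=|G|$, so a unique similarity exists precisely when $G=\{\mathrm{id}\}$, that is, when $\CCC_1$ has no nontrivial symmetry. This condition is symmetric in the two curves: conjugation $g\mapsto f_0\circ g\circ f_0^{-1}$ is a bijection from the self-isometries of $\CCC_1$ to those of $\CCC_2$, since the conjugate has ratio $\lambda_0\cdot 1\cdot\lambda_0^{-1}=1$ and evidently preserves $\CCC_2$. Therefore $\CCC_1$ is symmetric if and only if $\CCC_2$ is, and the desired similarity is unique if and only if the curves are not symmetric.

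I expect the crux to be the finiteness of $G$. The reduction to self-isometries through Theorem~\ref{preserve} is immediate, but excluding a continuum of symmetries requires the dichotomy between a finite stabilizer and one of positive dimension: once translations and twists are removed by Lemma~\ref{lem:invariantisometries}, the only way to have infinitely many symmetries is to admit a one-parameter rotation subgroup, and turning this intuition into a rigorous Lie-theoretic statement---or invoking the corresponding finiteness result from \cite{AHM15}---is where the real work lies.
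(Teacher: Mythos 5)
Your core argument is the same as the paper's, just phrased in coset language: the paper takes two similarities $f_1,f_2$ with $f_i(\CCC_1)=\CCC_2$, observes that $f_1\circ f_2^{-1}$ is a self-similarity, hence by Theorem~\ref{preserve} a symmetry, and then invokes the known finiteness of the symmetry group of a space curve other than a line or circle (citing \cite{AHM14-2}, whereas you cite \cite{AHM15}; both are legitimate sources for that fact). Your packaging of the set of similarities as the coset $f_0\cdot G$ is a clean way to get both the finiteness and the uniqueness statements at once, and your conjugation argument $g\mapsto f_0\circ g\circ f_0^{-1}$, showing that $\CCC_1$ is symmetric if and only if $\CCC_2$ is, is a genuine (small) improvement in rigor: the paper's proof of the uniqueness clause only ever discusses symmetries of $\CCC_1$, leaving the equivalence with $\CCC_2$ implicit.

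One caveat on your optional ``self-contained'' finiteness argument: the step ``were $G$ infinite it could not be discrete'' is not justified and is exactly where the difficulty sits. Infinite discrete closed subgroups of the isometry group of $\RR^3$ do exist (e.g., the group generated by a single twist), so you must argue that a discrete group all of whose elements are elliptic (which is what Lemma~\ref{lem:invariantisometries} buys you) is necessarily finite --- for instance, by showing the rotation axes must pairwise intersect (else a product of two rotations is a twist or translation), then handling the concurrent and coplanar configurations to land in a compact stabilizer. As written, that sketch would not survive refereeing; but since you correctly flag this and offer the citation as the primary route --- which is precisely what the paper does --- your proof stands.
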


\begin{proof}
Assume there are distinct similarities $f_1, f_2$ with $f_1(\CCC_1) = \CCC_2 = f_2(\CCC_1)$. Then $f_1\circ f_2^{-1}$ is a nontrivial similarity transforming $\CCC_1$ into itself. By Theorem~\ref{preserve}, $f_1\circ f_2^{-1}$ is a nontrivial isometry, and therefore a symmetry of $\CCC_1$. Since the number of symmetries of a space curve different from a line or a circle is finite \cite{AHM14-2}, the first part follows. As for the second part, if $\CCC_1$ is not symmetric then $f_1\circ f_2^{-1}$ is the identity, and $f_1 = f_2$. Conversely, if $\CCC_1$ has a symmetry $f$, then $f_1\circ f$ is another similarity from $\CCC_1$ to $\CCC_2$.
\end{proof}

\begin{proposition} \label{lambda-unique}
Let $\CCC_1,\CCC_2$ be irreducible algebraic space curves, not a union of concurrent (or parallel) lines, for which there exist similarities $f_i(\tx) = \lambda_i \mQ_i \tx + \bfb_i$ such that $f_i(\CCC_1) = \CCC_2$, with $i=1,2$. Then $|\lambda_1| = |\lambda_2|$.
\end{proposition}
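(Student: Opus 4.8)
The plan is to reduce the statement to the self-similarity result of Theorem \ref{preserve}. Given the two similarities $f_1,f_2$ with $f_i(\CCC_1)=\CCC_2$, I would form the composition $g := f_2^{-1}\circ f_1$. Since $f_1(\CCC_1)=\CCC_2$ and $f_2(\CCC_1)=\CCC_2$, we have $f_2^{-1}(\CCC_2)=\CCC_1$, so that $g(\CCC_1)=f_2^{-1}\big(f_1(\CCC_1)\big)=f_2^{-1}(\CCC_2)=\CCC_1$. Thus $g$ is a self-similarity of $\CCC_1$, i.e.\ a similarity leaving $\CCC_1$ invariant.

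Next I would compute the ratio of $g$ from the normal form \eqref{eq:similarity}. The inverse of a similarity $f_i(\tx)=\lambda_i\mQ_i\tx+\bfb_i$ is again a similarity, namely $f_i^{-1}(\tx)=\lambda_i^{-1}\mQ_i^{\rmT}\tx-\lambda_i^{-1}\mQ_i^{\rmT}\bfb_i$, of ratio $\lambda_i^{-1}$, using that $\mQ_i^{\rmT}=\mQ_i^{-1}$ is again special orthogonal. Composing, $g(\tx)=(\lambda_1/\lambda_2)\,\mQ_2^{\rmT}\mQ_1\,\tx+\bfb$ for some $\bfb\in\RR^3$, where $\mQ_2^{\rmT}\mQ_1$ is special orthogonal; hence $g$ is a similarity of ratio $\lambda_1/\lambda_2$. (Equivalently, one reads this off directly from \eqref{eq:similarity2} applied to $f_1$ and then to $f_2^{-1}$.)

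Finally, I would invoke Theorem \ref{preserve}. By hypothesis $\CCC_1$ is not a union of (concurrent or parallel) lines, so in particular it is not a union of concurrent lines, and the theorem applies: the self-similarity $g$ must be an isometry. By \eqref{eq:similarity2} its ratio then satisfies $|\lambda_1/\lambda_2|=1$, which gives $|\lambda_1|=|\lambda_2|$, as desired.

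I do not expect a serious obstacle, since the argument is a short reduction to a result already in hand. The only points requiring care are the bookkeeping of ratios under inversion and composition — verifying that $f_i^{-1}$ has ratio $\lambda_i^{-1}$ and that composition multiplies ratios, so that only $|\lambda_1/\lambda_2|$ enters — and confirming that the proposition's exclusion of concurrent \emph{or} parallel lines indeed supplies the precise hypothesis needed to apply Theorem \ref{preserve} to $\CCC_1$.
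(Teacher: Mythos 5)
Your proposal is correct and matches the paper's own proof essentially step for step: both form $g = f_2^{-1}\circ f_1$, verify it is a similarity of ratio $\lambda_1/\lambda_2$ leaving $\CCC_1$ invariant (the paper writes out the same inverse formula and checks $\det(\mQ_2^\rmT\mQ_1)=1$ and orthogonality explicitly), and then apply Theorem \ref{preserve} to conclude $|\lambda_1/\lambda_2|=1$. Your closing remarks on the hypothesis are also on point, since irreducibility makes the ``union of concurrent lines'' exclusion of Theorem \ref{preserve} follow from the proposition's assumptions.
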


\begin{proof} One has $f_2^{-1}(\tx) = \lambda_2^{-1}\mQ_2^{-1}(\tx - \bfb_2)$. Then $(f_2^{-1}\circ f_1)(\tx) = \lambda\mQ\tx + \bfb$, with
\[\lambda := \frac{\lambda_1}{\lambda_2},\qquad \mQ := \mQ_2^\rmT \mQ_1, \qquad \bfb := \frac{1}{\lambda_2} \mQ_2^\rmT(\bfb_1 - \bfb_2),
\]
is a similarity since $0 \neq \lambda \in \RR$, $\det(\mQ) = \det(\mQ_2^\rmT) \det(\mQ_1) = 1$, and
\[ \mQ^\rmT\mQ = \mQ_1^\rmT \mQ_2 \mQ_2^\rmT \mQ_1 = \mQ_1^\rmT \mI \mQ_1 = \mI.\]
Since $f_2^{-1} \circ f_1$ leaves $\CCC_1$ invariant, Theorem \ref{preserve} implies that $f_2^{-1}\circ f_1$ is an isometry, implying $|\lambda_1/\lambda_2| = 1$ and therefore $|\lambda_1| = |\lambda_2|$.
\end{proof}

It is well known that the birational functions on the line are the \emph{M\"obius transformations} \cite{SWPD}, i.e., rational functions
\begin{equation}\label{eq:Moebius}
\varphi: \RR\dashrightarrow \RR,\qquad \varphi(t) = \frac{a t + b}{c t + d},\qquad \Delta := ad - bc \neq 0.
\end{equation}

The following result relates the similarity $f$ in space to a M\"obius transformation on the line. In \cite{AHM14} a proof was given for the case of plane curves, which generalizes \emph{mutatis mutandis} to the case of space curves.

\begin{theorem}\label{th-fund}
Let $\CCC_1, \CCC_2 \subset \RR^3$ be rational space curves with proper parametrizations $\bfx_1, \bfx_2: \RR\dashrightarrow \RR^3$. If $\CCC_1$, $\CCC_2$ are related by a similarity $f$, then there exists a unique M\"obius transformation $\varphi$ for which the diagram
\begin{equation}\label{eq:fundamentaldiagram}
\xymatrix{
\CCC_1 \ar[r]^{f} & \CCC_2 \\
\RR \ar@{-->}[u]^{\bfx_1} \ar@{-->}[r]_{\varphi} & \RR \ar@{-->}[u]_{\bfx_2}
}
\end{equation}
is commutative. 
\end{theorem}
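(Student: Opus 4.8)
The plan is to build $\varphi$ directly as a composition of rational maps and then to use properness to show that it is a birational self-map of the line, which by the cited characterization must be a M\"obius transformation. Since $\bfx_2$ is proper, it is birational onto $\CCC_2$ and thus admits a rational inverse $\bfx_2^{-1}\colon \CCC_2 \dashrightarrow \RR$. First I would define
\[ \varphi := \bfx_2^{-1} \circ f \circ \bfx_1, \]
a composition of rational maps $\RR \dashrightarrow \CCC_1 \dashrightarrow \CCC_2 \dashrightarrow \RR$, hence a rational function of $t$ defined off a finite set. Commutativity of \eqref{eq:fundamentaldiagram} is then immediate: as $\bfx_2$ is birational onto $\CCC_2$, the composition $\bfx_2 \circ \bfx_2^{-1}$ is the identity on $\CCC_2$ as a rational map, so $\bfx_2 \circ \varphi = \bfx_2 \circ \bfx_2^{-1} \circ f \circ \bfx_1 = f \circ \bfx_1$.

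The crux is to prove that $\varphi$ is birational, since then the fact that the birational self-maps of the line are exactly the M\"obius transformations \eqref{eq:Moebius} (see \cite{SWPD}) finishes this part. To this end I would observe that each factor of $\varphi$ is birational: $\bfx_1$ and $\bfx_2^{-1}$ by properness of the two parametrizations, and $f$ because it is an invertible affine (indeed polynomial) bijection of $\RR^3$ whose inverse $f^{-1}$ is again a similarity, so that $f$ restricts to a biregular isomorphism $f|_{\CCC_1}\colon \CCC_1 \to \CCC_2$ carrying $\CCC_1$ bijectively onto $\CCC_2$. A composition of birational maps is birational, so $\varphi\colon \RR \dashrightarrow \RR$ is birational and therefore of the form \eqref{eq:Moebius}.

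For uniqueness I would suppose $\psi$ is any M\"obius transformation making \eqref{eq:fundamentaldiagram} commute, so that $\bfx_2 \circ \psi = f \circ \bfx_1 = \bfx_2 \circ \varphi$. Because $\bfx_2$ is injective for all but finitely many parameter values, this forces $\psi(t) = \varphi(t)$ for all but finitely many $t$, and two rational functions agreeing at infinitely many points must coincide; hence $\psi = \varphi$.

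I expect the main obstacle to be the careful handling of the indeterminacy loci when composing rational maps — in particular, justifying the cancellation $\bfx_2 \circ \bfx_2^{-1} = \mathrm{id}$ and the birationality of the triple composition. This is exactly where the properness hypothesis does the essential work: it guarantees that $\bfx_1$ and $\bfx_2$ are invertible away from finite sets, so that the compositions are genuine birational maps and the closing identity of rational functions is legitimate.
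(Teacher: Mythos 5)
Your proposal is correct and takes essentially the same approach as the paper's: the paper defers to the plane-curve case in \cite{AHM14}, whose argument is precisely your construction $\varphi = \bfx_2^{-1}\circ f\circ \bfx_1$, with properness of $\bfx_1,\bfx_2$ supplying birationality, the classification of birational self-maps of the line (cited from \cite{SWPD}) yielding the M\"obius form, and generic injectivity of $\bfx_2$ giving uniqueness. No gaps to report.
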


Since the M\"obius transformation $\varphi$ maps the real line to itself, its coefficients can always be assumed to be real by dividing by a common complex number if necessary \cite[Lemma 3]{AHM15}. Notice that $s=\varphi(t)$ provides the $s$-value generating the image, in $\CCC_2$, under the similarity $f$, of the point generated by $t$ in $\CCC_1$.

\begin{corollary} \label{arclength}
Consider proper parametrizations $\bfx_j$, $j=1,2$, as in \eqref{eq:parametrizations}, a similarity $f$ as in \eqref{eq:similarity}, and a M\"obius transformation $\varphi$, related by \eqref{eq:fundamentaldiagram}. Then
\begin{equation} \label{eq:arc}
|\lambda| \cdot \left\| \bfx_1'(t)\right\| - \left\| (\bfx_2\circ \varphi)'(t)\right\| = 0,
\end{equation}
\end{corollary}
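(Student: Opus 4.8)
The plan is to read the commutativity of diagram \eqref{eq:fundamentaldiagram} as a pointwise identity and then differentiate it. Following the two paths from the lower-left copy of $\RR$ up to $\CCC_2$, commutativity says that $f\circ\bfx_1$ and $\bfx_2\circ\varphi$ coincide, i.e.
\[
f\big(\bfx_1(t)\big) = \big(\bfx_2\circ\varphi\big)(t)
\]
for all but finitely many $t\in\RR$ (those where $\bfx_1$, $\bfx_2$, or $\varphi$ fail to be defined).

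Next I would substitute the explicit form \eqref{eq:similarity} of the similarity, $f(\tx) = \lambda\mQ\tx + \bfb$, obtaining $\lambda\mQ\bfx_1(t) + \bfb = (\bfx_2\circ\varphi)(t)$. Differentiating both sides with respect to $t$, and treating $\lambda$, $\mQ$, and $\bfb$ as constants, annihilates the translation $\bfb$ and leaves the vector identity
\[
\lambda\mQ\,\bfx_1'(t) = (\bfx_2\circ\varphi)'(t).
\]

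Finally I would take Euclidean norms of both sides. Since $\mQ$ is special orthogonal, $\mQ^\rmT\mQ = \mI$, so $\|\mQ\bfv\| = \|\bfv\|$ for every real vector $\bfv$; applying this with $\bfv = \bfx_1'(t)$ gives $\|\lambda\mQ\,\bfx_1'(t)\| = |\lambda|\cdot\|\bfx_1'(t)\|$. Combined with the displayed vector identity, this yields precisely the claimed relation \eqref{eq:arc}.

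I do not expect any serious obstacle here: the statement is a direct differential consequence of the fundamental diagram of Theorem \ref{th-fund}. The only points needing a little care are bookkeeping ones, namely that the identity is valid only on the cofinite set of parameters where all maps and their derivatives are defined, and that in passing to norms the orthogonal factor $\mQ$ drops out while the scalar factor contributes $|\lambda|$ rather than $\lambda$. Both facts follow immediately from \eqref{eq:similarity}.
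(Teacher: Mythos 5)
Your proposal is correct and matches the paper's proof essentially verbatim: both write the commutative diagram as $\lambda\mQ\bfx_1(t)+\bfb=(\bfx_2\circ\varphi)(t)$, differentiate to eliminate $\bfb$, and take norms using the orthogonality of $\mQ$ to obtain \eqref{eq:arc}. Your extra remarks on the cofinite domain of validity are fine but not needed beyond what the paper states.
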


\begin{proof}
The commutative diagram \eqref{eq:fundamentaldiagram} has the corresponding equation
\[ \lambda \mQ \bfx_1(t)+\bfb=(\bfx_2 \circ \varphi)(t).\]
Differentiating and taking norms yields $\|\lambda \mQ \bfx_1'(t)\| = \|(\bfx_2 \circ \varphi)'(t)\|$,
which, using the orthogonality of $\mQ$, yields \eqref{eq:arc}.
\end{proof}

\subsection{Differential invariants}\label{sec:differentialinvariants}
\noindent The remainder of the section concerns the effect of a similarity and M\"obius transformation on the \emph{curvature} $\kappa$ and \emph{torsion} $\tau$ of a parametric curve $\bfx$, which are defined by 
\begin{equation} \label{kt}
\kappa = \kappa_\bfx := \frac{\| \bfx' \times \bfx''\|}{\| \bfx' \|^3},  \qquad \tau = \tau_\bfx := \frac{\langle \bfx'\times \bfx'', \bfx'''\rangle}{\| \bfx' \times \bfx''\|^2}
\end{equation}
Notice in particular that $\kappa \geq 0$, while $\tau$ can be positive, negative, or zero. Moreover, although $\tau$ and $\kappa^2$ are rational functions for any rational map $\bfx$, the curvature $\kappa$ is in general not rational.

\begin{lemma}\label{lem:kappatauf}
For a similarity $f(\tx) = \lambda \mQ \tx + \bfb$ and parametrization $\bfx$ as in \eqref{eq:parametrizations},
\[ |\lambda| \cdot \kappa_{f\circ \bfx} = \kappa_\bfx,\qquad  \lambda\cdot \tau_{f\circ \bfx} = \tau_\bfx. \]
\end{lemma}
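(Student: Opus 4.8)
The plan is to directly substitute $f\circ\bfx$ into the definitions of $\kappa$ and $\tau$ in \eqref{kt} and track how each factor transforms. Since $f(\tx) = \lambda\mQ\tx + \bfb$, the translation $\bfb$ disappears upon differentiation, so $(f\circ\bfx)^{(k)} = \lambda\mQ\,\bfx^{(k)}$ for every derivative order $k \geq 1$. Everything then reduces to understanding how the orthogonal scaling $\lambda\mQ$ interacts with the norm, cross product, and scalar triple product appearing in \eqref{kt}.

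The key algebraic facts I would establish first concern the action of $\mQ$ on cross products and inner products. Since $\mQ$ is orthogonal, it preserves inner products, $\langle \mQ\bfu,\mQ\bfv\rangle = \langle\bfu,\bfv\rangle$, and hence norms, $\|\mQ\bfu\| = \|\bfu\|$. For the cross product, the crucial identity is that for a $3\times 3$ matrix one has $(\mQ\bfu)\times(\mQ\bfv) = \det(\mQ)\,\mQ(\bfu\times\bfv)$; because $\det(\mQ)=1$ here, this simplifies to $(\mQ\bfu)\times(\mQ\bfv) = \mQ(\bfu\times\bfv)$. I would then compute
\[
(f\circ\bfx)'\times(f\circ\bfx)'' = (\lambda\mQ\bfx')\times(\lambda\mQ\bfx'') = \lambda^2\,\mQ(\bfx'\times\bfx''),
\]
so that $\|(f\circ\bfx)'\times(f\circ\bfx)''\| = \lambda^2\|\bfx'\times\bfx''\|$ and $\|(f\circ\bfx)'\|^3 = |\lambda|^3\|\bfx'\|^3$. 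Plugging into the curvature formula gives
\[
\kappa_{f\circ\bfx} = \frac{\lambda^2\|\bfx'\times\bfx''\|}{|\lambda|^3\|\bfx'\|^3} = \frac{1}{|\lambda|}\,\kappa_\bfx,
\]
which is the first claimed identity. For the torsion, the numerator is $\langle(f\circ\bfx)'\times(f\circ\bfx)'',(f\circ\bfx)'''\rangle = \langle\lambda^2\mQ(\bfx'\times\bfx''),\lambda\mQ\bfx'''\rangle = \lambda^3\langle\bfx'\times\bfx'',\bfx'''\rangle$ by orthogonality, while the denominator is $\|(f\circ\bfx)'\times(f\circ\bfx)''\|^2 = \lambda^4\|\bfx'\times\bfx''\|^2$, yielding $\tau_{f\circ\bfx} = \tfrac{1}{\lambda}\tau_\bfx$, the second identity.

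The calculation is essentially routine once the two orthogonality/determinant identities are in hand, so there is no deep obstacle. The one point requiring genuine care is the absolute values: the curvature picks up $|\lambda|$ because it involves the odd power $\|\bfx'\|^3$ under a norm (always nonnegative), whereas the torsion picks up the signed $\lambda$ because the $\lambda^3$ in the numerator competes against the $\lambda^4$ in the denominator, leaving a single signed factor of $\lambda$. Keeping the sign bookkeeping straight—and in particular remembering that $\det(\mQ)=1$ is what makes the cross product transform cleanly without an extra sign—is the main thing to verify, and it is precisely this sign distinction that makes $\kappa$ and $\tau$ behave differently under orientation-reversing similarities ($\lambda<0$).
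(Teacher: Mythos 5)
Your proof is correct and follows essentially the same route as the paper's: both reduce to $(f\circ \bfx)^{(n)} = \lambda \mQ \bfx^{(n)}$ and the cross-product identity $(\mQ\bfu)\times(\mQ\bfv) = \det(\mQ)\,\mQ(\bfu\times\bfv)$ (the paper states this as \eqref{eq:orthogonalcrossproduct} for a general invertible matrix $\mM$ and then specializes to the orthogonal $\mQ$ with $\det(\mQ)=1$), followed by direct substitution into \eqref{kt} with the same cancellation of $\lambda^2/|\lambda|^3$ for $\kappa$ and $\lambda^3/\lambda^4$ for $\tau$. Nothing further is needed.
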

\begin{proof}
A straightforward calculation yields, for any invertible matrix $\mM\in \RR^{3\times 3}$ and vectors $\bfu, \bfv\in \RR^3$, the identity 
\begin{equation}\label{eq:orthogonalcrossproduct}
(\mM\bfu) \times (\mM\bfv) = \det(\mM) (\mM^{-1})^{\rmT} (\bfu \times \bfv).
\end{equation}
Using $(f\circ \bfx)^{(n)} = \lambda \mQ \bfx^{(n)}$ for $n = 1,2,3$ and $\det(\mQ)=1$ with $\mQ$ orthogonal,
\[ |\lambda|\cdot \kappa_{f\circ \bfx}
 = \frac{\| (\mQ\bfx') \times (\mQ\bfx'')\|}{\| \mQ\bfx' \|^3}
 = \frac{\| \mQ(\bfx' \times \bfx'')\|}{\| \mQ\bfx' \|^3}
 = \frac{\| \bfx' \times \bfx''\|}{\| \bfx' \|^3}
 = \kappa_\bfx, \]
\[ \lambda \cdot \tau_{f\circ \bfx}
 = \frac{\langle (\mQ\bfx')\times (\mQ \bfx''), \mQ\bfx'''\rangle}{\| (\mQ\bfx') \times (\mQ\bfx'')\|^2}
 = \frac{\langle \mQ(\bfx' \times \bfx''), \mQ\bfx'''\rangle}{\|  \mQ(\bfx' \times \bfx'')\|^2} 
 = \tau_\bfx.
 \qedhere \]
\end{proof}

Next we recall a lemma from \cite{AHM15}, which describes the behavior of the curvature and torsion under reparametrization, for instance by a M\"obius transformation.
\begin{lemma}\label{lem:kappatauphi}
Let $\bfx$ be a rational parametrization \eqref{eq:parametrizations} and let $\phi\in C^3(U)$, with $U\subset \RR$ open. Then
\[ \kappa_{\bfx \circ \phi} = \kappa_\bfx \circ \phi,\qquad \tau_{\bfx \circ \phi} = \tau_\bfx \circ \phi, \]
whenever both sides are defined.
\end{lemma}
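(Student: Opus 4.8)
The plan is to reduce both identities to a direct computation with the chain rule, after which everything cancels. Writing $\bfy := \bfx\circ\phi$ and denoting derivatives with respect to the parameter by primes, I would first express the first three derivatives of $\bfy$ in terms of the derivatives of $\bfx$ (evaluated at $\phi$) and those of $\phi$, via the Fa\`a di Bruno formula:
\begin{align*}
\bfy'   &= \phi'\, \bfx'\circ\phi, \\
\bfy''  &= \phi''\, \bfx'\circ\phi + (\phi')^2\, \bfx''\circ\phi, \\
\bfy''' &= \phi'''\, \bfx'\circ\phi + 3\phi'\phi''\, \bfx''\circ\phi + (\phi')^3\, \bfx'''\circ\phi.
\end{align*}

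Next I would compute the cross product $\bfy'\times\bfy''$. Since $\bfx'\times\bfx' = 0$, the mixed $\phi'\phi''$ term drops out and one is left with the clean relation $\bfy'\times\bfy'' = (\phi')^3\,(\bfx'\times\bfx'')\circ\phi$. The curvature identity then follows at once: taking norms gives $\|\bfy'\times\bfy''\| = |\phi'|^3\,\|(\bfx'\times\bfx'')\circ\phi\|$ and $\|\bfy'\|^3 = |\phi'|^3\,\|\bfx'\circ\phi\|^3$, so the factor $|\phi'|^3$ cancels in the quotient \eqref{kt}, leaving $\kappa_\bfy = \kappa_\bfx\circ\phi$.

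For the torsion I would substitute the expressions for $\bfy'\times\bfy''$ and $\bfy'''$ into the numerator $\langle\bfy'\times\bfy'',\bfy'''\rangle$. The key simplification is that $(\bfx'\times\bfx'')\circ\phi$ is orthogonal to both $\bfx'\circ\phi$ and $\bfx''\circ\phi$, so the $\phi'''$ and $3\phi'\phi''$ terms of $\bfy'''$ contribute nothing, yielding $\langle\bfy'\times\bfy'',\bfy'''\rangle = (\phi')^6\,\langle\bfx'\times\bfx'',\bfx'''\rangle\circ\phi$. Since the denominator satisfies $\|\bfy'\times\bfy''\|^2 = (\phi')^6\,\|(\bfx'\times\bfx'')\circ\phi\|^2$, the factor $(\phi')^6$ again cancels and $\tau_\bfy = \tau_\bfx\circ\phi$.

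Neither identity relies on any monotonicity of $\phi$, so this simultaneously handles orientation-preserving and orientation-reversing reparametrizations (the curvature because it is built from norms, the torsion because $(\phi')^6$ is an even power). This computation is entirely routine; the only real obstacle is bookkeeping. I expect the main points requiring care to be, first, tracking that every derivative of $\bfx$ is evaluated at $\phi(t)$, so that the end results are genuinely $\kappa_\bfx\circ\phi$ and $\tau_\bfx\circ\phi$ rather than $\kappa_\bfx$ and $\tau_\bfx$; and second, the domain caveat, since all cancellations divide by $(\phi')^3$. The latter is legitimate precisely where the left-hand side is defined, i.e.\ where $\bfy' = \phi'\,\bfx'\circ\phi \neq 0$, at which parameters $\phi'\neq0$ and the right-hand side is defined as well — matching the hypothesis ``whenever both sides are defined.''
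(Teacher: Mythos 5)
Your computation is correct in every step: the Fa\`a di Bruno expansions of $\bfy''$ and $\bfy'''$ are right, the relation $\bfy'\times\bfy'' = (\phi')^3\,(\bfx'\times\bfx'')\circ\phi$ holds because the mixed $\phi'\phi''$ term contains $\bfx'\times\bfx' = 0$, and the orthogonality of $(\bfx'\times\bfx'')\circ\phi$ to $\bfx'\circ\phi$ and $\bfx''\circ\phi$ correctly kills the $\phi'''$ and $3\phi'\phi''$ contributions in the numerator of the torsion, leaving $(\phi')^6\,\langle\bfx'\times\bfx'',\bfx'''\rangle\circ\phi$, which cancels against $\|\bfy'\times\bfy''\|^2 = (\phi')^6\,\|(\bfx'\times\bfx'')\circ\phi\|^2$. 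There is, however, nothing in this paper to compare your argument with: Lemma \ref{lem:kappatauphi} is stated without proof, explicitly \emph{recalled} from \cite{AHM15}, where essentially this same direct chain-rule computation is carried out. So your proposal does not diverge from the paper so much as supply the argument the paper outsources to a citation. Beyond self-containedness, your write-up adds two worthwhile observations: first, that no monotonicity of $\phi$ is needed, the curvature identity surviving orientation reversal through the norms and the torsion identity through the even power $(\phi')^6$ --- a point with real content here, since under a similarity (Lemma \ref{lem:kappatauf}) the torsion \emph{does} pick up the sign of $\lambda$, whereas under reparametrization it does not; second, a correct treatment of the caveat ``whenever both sides are defined,'' noting that wherever the left-hand sides are defined one has $\bfy'\neq 0$ (respectively $\bfy'\times\bfy''\neq 0$), which forces $\phi'\neq 0$ and thereby legitimizes every division by a power of $\phi'$ and guarantees the right-hand sides are defined as well.
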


The following lemma relates the curvatures and torsions of similar curves.

\begin{lemma}\label{lem:curvaturetorsionsimilar}
Suppose $\bfx_1,\bfx_2$ define curves $\CCC_1, \CCC_2$ with $f(\CCC_1) = \CCC_2$ for a similarity~$f$ with ratio $\lambda$. Then there is a M\"obius transformation $\varphi$ such that
\begin{equation}\label{eq:curvaturetorsionsimilar}
   \kappa_{\bfx_2} \circ \varphi
 = \kappa_{\bfx_2 \circ \varphi}
 = \kappa_{f\circ \bfx_1}
 = \frac{1}{|\lambda|} \kappa_{\bfx_1},\qquad
\tau_{\bfx_2} \circ \varphi
 = \tau_{\bfx_2 \circ \varphi}
 = \tau_{f\circ \bfx_1}
 = \frac{1}{\lambda} \tau_{\bfx_1}. \qedhere
\end{equation}
\end{lemma}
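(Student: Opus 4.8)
The plan is to simply chain together the three lemmas already established, using Theorem~\ref{th-fund} to produce the M\"obius transformation $\varphi$ in the first place. The key observation is that the two chains of equalities in \eqref{eq:curvaturetorsionsimilar} each decompose into three links, and each link is exactly one of the prior results, read left to right.

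First I would invoke Theorem~\ref{th-fund}: since $\CCC_1,\CCC_2$ are related by the similarity $f$, there is a (unique) M\"obius transformation $\varphi$ making the diagram \eqref{eq:fundamentaldiagram} commute, which at the level of parametrizations reads $f\circ \bfx_1 = \bfx_2\circ\varphi$. This is the $\varphi$ claimed in the statement. Now read the curvature chain: the leftmost equality $\kappa_{\bfx_2}\circ\varphi = \kappa_{\bfx_2\circ\varphi}$ is precisely Lemma~\ref{lem:kappatauphi} applied with $\phi = \varphi$ (a M\"obius transformation is $C^3$ away from its pole, so the hypotheses hold wherever both sides are defined). The middle equality $\kappa_{\bfx_2\circ\varphi} = \kappa_{f\circ\bfx_1}$ is just the identity $\bfx_2\circ\varphi = f\circ\bfx_1$ from the commutative diagram, substituted into the curvature operator. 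The rightmost equality $\kappa_{f\circ\bfx_1} = \tfrac{1}{|\lambda|}\kappa_{\bfx_1}$ is Lemma~\ref{lem:kappatauf}, which gives $|\lambda|\cdot\kappa_{f\circ\bfx_1} = \kappa_{\bfx_1}$, i.e., $\kappa_{f\circ\bfx_1} = \tfrac{1}{|\lambda|}\kappa_{\bfx_1}$.

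The torsion chain is handled identically: Lemma~\ref{lem:kappatauphi} gives $\tau_{\bfx_2}\circ\varphi = \tau_{\bfx_2\circ\varphi}$; the commutative diagram gives $\tau_{\bfx_2\circ\varphi} = \tau_{f\circ\bfx_1}$; and Lemma~\ref{lem:kappatauf}, which asserts $\lambda\cdot\tau_{f\circ\bfx_1} = \tau_{\bfx_1}$, gives the final link $\tau_{f\circ\bfx_1} = \tfrac{1}{\lambda}\tau_{\bfx_1}$. The only point worth flagging is the distinction between the two scaling factors: curvature picks up $1/|\lambda|$ while torsion picks up $1/\lambda$, since $\kappa$ is defined as a norm (hence nonnegative and sensitive only to $|\lambda|$), whereas $\tau$ carries a sign that tracks orientation and is therefore sensitive to the sign of $\lambda$. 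This asymmetry is exactly what Lemma~\ref{lem:kappatauf} records, so no extra work is needed here.

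Since every step is a direct citation of an earlier result together with the substitution $\bfx_2\circ\varphi = f\circ\bfx_1$, there is no real obstacle; the proof is essentially bookkeeping. If anything, the one substantive matter is domain-of-definition: the equalities hold only where all the relevant derivatives and denominators are defined, i.e., away from the finitely many bad parameter values (the pole of $\varphi$, base points of the parametrizations, and points where $\bfx' \times \bfx''$ vanishes). I would state the conclusion as an identity of rational functions, valid wherever both sides are defined, which is how Lemma~\ref{lem:kappatauphi} is already phrased, so this causes no difficulty.
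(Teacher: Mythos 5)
Your proposal is correct and follows exactly the paper's own argument: invoke Theorem~\ref{th-fund} to obtain $\varphi$ with $f\circ\bfx_1 = \bfx_2\circ\varphi$, then chain Lemma~\ref{lem:kappatauphi} (reparametrization invariance) with Lemma~\ref{lem:kappatauf} (the $1/|\lambda|$ and $1/\lambda$ scalings). The paper states this in two lines; your version merely spells out each link, including the correct sign/absolute-value asymmetry between $\kappa$ and $\tau$ and the domain-of-definition caveat, both of which are consistent with the paper.
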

\begin{proof}
By Theorem \ref{th-fund}, there exist a M\"obius transformation $\varphi$ such that $f\circ \bfx_1 = \bfx_2 \circ \varphi$. The statement follows from Lemmas \ref{lem:kappatauf} and \ref{lem:kappatauphi}.
\end{proof}

\subsection{Helical curves} \label{subsec-hel}
\noindent Consider parametrizations $\bfx_i$, $i=1,2$, as in \eqref{eq:parametrizations} defining nonplanar curves. Then the torsion $\tau_{\bfx_i}$ is not identically zero, and we can consider the ratio
\[ \mu_i := \frac{\kappa_{\bfx_i}}{\tau_{\bfx_i}},\qquad i = 1,2. \]
Whenever this ratio is constant we refer to it as the \emph{proportionality constant}. Such nonplanar curves are called \emph{helical curves} \cite{Farouki, Fatma}, generalizing the familiar circular helix in which case not only the quotient of the curvature and torsion, but also the curvature and torsion themselves are constant.

\begin{lemma}\label{lem:nonzeromu}
Any rational helical curve $\bfx$ has proportionality constant $\mu\neq 0$.
\end{lemma}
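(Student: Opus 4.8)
The plan is to show that the proportionality constant cannot vanish by exhibiting a single parameter value at which the curvature is strictly positive while the torsion is finite and nonzero. Since $\mu = \kappa_\bfx/\tau_\bfx$ is by hypothesis constant, its value is determined by any such point, and a strictly positive numerator against a nonzero denominator forces $\mu \neq 0$.

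First I would invoke nonplanarity, exactly as recorded at the start of Section \ref{subsec-hel}: because $\bfx$ defines a nonplanar curve, the torsion $\tau_\bfx$ is not identically zero. Hence there exists a parameter value $t_0$, lying outside the finitely many poles and base points of the parametrization, at which $\tau_\bfx(t_0)$ is defined and nonzero. The choice of $t_0$ is automatic once $\tau_\bfx \not\equiv 0$, so this step presents no obstacle.

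Next I would extract positivity of the curvature at $t_0$ directly from the definitions in \eqref{kt}. The torsion $\tau_\bfx = \langle \bfx'\times\bfx'', \bfx'''\rangle / \|\bfx'\times\bfx''\|^2$ is only defined where its denominator $\|\bfx'\times\bfx''\|^2$ does not vanish; therefore $\tau_\bfx(t_0)$ being finite and nonzero forces $\bfx'(t_0)\times\bfx''(t_0)\neq \boldsymbol{0}$. Consequently $\kappa_\bfx(t_0)=\|\bfx'(t_0)\times\bfx''(t_0)\|/\|\bfx'(t_0)\|^3 > 0$, the denominator being nonzero since $\bfx$ is a regular parametrization.

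Finally I would conclude by evaluating the constant ratio at $t_0$, obtaining $\mu = \kappa_\bfx(t_0)/\tau_\bfx(t_0)$, a quotient of a strictly positive real by a nonzero real, whence $\mu\neq 0$. Equivalently, in contrapositive form, were $\mu = 0$ then $\kappa_\bfx = \mu\,\tau_\bfx \equiv 0$, forcing $\bfx$ to be a straight line and contradicting nonplanarity. There is no genuinely hard step here; the only point requiring a word of care is that the evaluation must be carried out at a parameter where both differential invariants are defined, which the choice of $t_0$ guarantees.
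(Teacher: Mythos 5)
Your proof is correct, and it takes a genuinely different route from the paper's. The paper argues by contradiction: if $\mu = 0$, then $\kappa_\bfx \equiv 0$, so after excluding $\bfx'\equiv 0$ and $\bfx''\equiv 0$ there is a nonzero function $\nu$ with $\bfx'' = \nu\,\bfx'$; integrating componentwise gives $\bfx'(t) = \bfx_0\cdot\exp\big(\int \nu(t)\,\rmd t\big)$, so the curve is a line, contradicting nonplanarity. You avoid this integration argument entirely by evaluating at one good parameter: since the denominator of $\tau_\bfx$ in \eqref{kt} is $\|\bfx'\times\bfx''\|^2$, any $t_0$ where $\tau_\bfx$ is defined and nonzero has $\bfx'(t_0)\times\bfx''(t_0)\neq \boldsymbol{0}$, hence $\kappa_\bfx(t_0) > 0$, and the constant $\mu = \kappa_\bfx(t_0)/\tau_\bfx(t_0)$ is a nonzero quotient. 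What your route buys is brevity and elementarity; what the paper's buys is the self-contained fact that vanishing curvature forces a line --- which, note, your closing contrapositive aside quietly presupposes, so that aside (unlike your main argument) is not free. Two small repairs to your wording: first, if $\tau_\bfx$ is viewed as a rational function extended across removable $0/0$ points, the inference from ``$\tau_\bfx(t_0)$ finite and nonzero'' to a nonvanishing cross product requires choosing $t_0$ outside the finitely many zeros of $\bfx'\times\bfx''$, which is possible because $\bfx'\times\bfx'' \not\equiv 0$ (otherwise $\tau_\bfx$ would be nowhere defined, contradicting $\tau_\bfx \not\equiv 0$); with your stated convention that $\tau_\bfx$ is defined only where its denominator is nonzero, the inference is immediate. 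Second, the appeal to regularity of $\bfx$ is both unavailable (rational parametrizations may have singular parameter values, and the paper does not assume regularity) and unneeded, since $\bfx'(t_0)\times\bfx''(t_0)\neq\boldsymbol{0}$ already forces $\bfx'(t_0)\neq\boldsymbol{0}$.
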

\begin{proof}
Suppose $\mu = 0$. If $\bfx'\equiv 0$ or $\bfx''\equiv 0$, then integrating would yield a point or a line, which are planar and therefore non-helical. Therefore, since $\kappa \equiv 0$, there exists a nonzero function $\nu$ such that $\bfx''=\nu \cdot \bfx'$. Writing $\bfx = (x, y, z)$, integrating $x''/x' = \nu$, $y''/y' = \nu$, $z''/z' = \nu$ and taking exponentials yields $\bfx'(t) = \bfx_0\cdot \exp\big(\int \nu(t)\rmd t \big)$ for some constant vector $\bfx_0$. Therefore $\bfx$ is a line, contradicting that $\bfx$ is helical. We conclude $\mu \neq 0$.
\end{proof}

\begin{proposition}\label{prop:helicalequalmu}
Suppose $\bfx_1,\bfx_2$ define helical curves $\CCC_1, \CCC_2$ with proportionality constants $\mu_1, \mu_2$ satisfying $f(\CCC_1) = \CCC_2$ for a similarity~$f$ with ratio $\lambda$. Then $\mu_2 = \sgn(\lambda)\cdot \mu_1$.
\end{proposition}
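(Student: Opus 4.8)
The plan is to reduce everything to the already-established relations between the curvatures and torsions of similar curves, and then simply take their quotient. The key input is Lemma \ref{lem:curvaturetorsionsimilar}: since $f(\CCC_1) = \CCC_2$ for a similarity $f$ of ratio $\lambda$, there is a M\"obius transformation $\varphi$ with
\[
\kappa_{\bfx_2} \circ \varphi = \frac{1}{|\lambda|}\kappa_{\bfx_1},\qquad
\tau_{\bfx_2} \circ \varphi = \frac{1}{\lambda}\tau_{\bfx_1}.
\]
Because $\CCC_1, \CCC_2$ are nonplanar, the torsions $\tau_{\bfx_1}, \tau_{\bfx_2}$ are not identically zero, so the ratios $\mu_i = \kappa_{\bfx_i}/\tau_{\bfx_i}$ are well defined, and by hypothesis each $\mu_i$ is a constant.

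First I would form the composition $\mu_2 \circ \varphi$ and rewrite it as a quotient of the two functions appearing above. Dividing the curvature relation by the torsion relation gives
\[
\mu_2 \circ \varphi
= \frac{\kappa_{\bfx_2} \circ \varphi}{\tau_{\bfx_2} \circ \varphi}
= \frac{\tfrac{1}{|\lambda|}\kappa_{\bfx_1}}{\tfrac{1}{\lambda}\tau_{\bfx_1}}
= \frac{\lambda}{|\lambda|}\cdot \frac{\kappa_{\bfx_1}}{\tau_{\bfx_1}}
= \sgn(\lambda)\cdot \mu_1,
\]
where I use $\lambda/|\lambda| = \sgn(\lambda)$ for the nonzero real ratio $\lambda$.

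Finally I would invoke the assumption that the curves are helical: $\mu_2$ is a constant, hence $\mu_2 \circ \varphi = \mu_2$ identically (precomposition with $\varphi$ does not affect a constant function), and likewise $\mu_1$ is the constant proportionality constant. The displayed chain of equalities therefore reads $\mu_2 = \sgn(\lambda)\cdot \mu_1$, which is the claim.

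There is essentially no hard step here: the proposition is a direct corollary of Lemma \ref{lem:curvaturetorsionsimilar}. The only points requiring a word of care are that the division of the two relations is legitimate, which holds because $\tau_{\bfx_1}$ and $\tau_{\bfx_2}$ are not identically zero for nonplanar curves, and that the constancy of $\mu_1, \mu_2$ lets one strip off the composition with $\varphi$. If anything counts as the main subtlety, it is recognizing that the factor $|\lambda|$ in the curvature law and the factor $\lambda$ in the torsion law do not cancel but instead combine into $\sgn(\lambda)$, which is exactly the source of the sign in the conclusion.
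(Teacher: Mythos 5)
Your proof is correct and essentially identical to the paper's: both take the quotient of the curvature and torsion relations from Lemma \ref{lem:curvaturetorsionsimilar}, use the constancy of $\mu_2$ to remove the composition with $\varphi$, and observe that the factors $1/|\lambda|$ and $1/\lambda$ combine into $\sgn(\lambda)$. Your added remarks on why the division is legitimate and why $\mu_2 \circ \varphi = \mu_2$ only make explicit what the paper leaves implicit.
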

\begin{proof}
Taking the quotient in \eqref{eq:curvaturetorsionsimilar} yields
\[ \mu_2
 = \frac{\kappa_{\bfx_2}}{\tau_{\bfx_2}} \circ \varphi
 = \frac{\kappa_{\bfx_2} \circ \varphi}{\tau_{\bfx_2} \circ \varphi}
 = \frac{\frac{1}{|\lambda|}}{\frac{1}{\lambda}}\cdot \frac{ \kappa_{\bfx_1}}{\tau_{\bfx_1}}
 = \sgn(\lambda)\cdot \mu_1. \qedhere\]
\end{proof}

This proposition provides a necessary condition for similarity of helical curves. The following example shows that the converse does not hold in general.
\begin{example}\label{quint}
The helical quintics $\CCC_1,\CCC_2$ parametrized by
\begin{align*}
\bfx_1(t) & = \left(\frac{3}{4}t^5 + \frac{3}{8}t^4+\frac{1}{4}t^3, \frac{4}{5}t^5+t^4\ , -\frac{3}{5}t^5+\frac{1}{2}t^4+\frac{1}{3}t^3 \right),\\
\bfx_2(t) & = \left(\frac{3}{2}t^5 + \frac{3}{4}t^4+t^3\ \ , \frac{6}{5}t^5+3t^4, -\frac{8}{5}t^5+t^4+\frac{4}{3}t^3\ \  \right).
\end{align*}
have proportionality constants $\mu_1=\mu_2=-4/3$. However, by applying Algorithm {\tt Similar3D} in Section \ref{sec:detecting}, one can show that $\CCC_1$ and $\CCC_2$ are not similar.
\end{example}

In order to check whether the condition in Proposition \ref{prop:helicalequalmu} is sufficient, we tried first several examples of helical cubics, following the method for constructing these curves presented in \cite{Farouki}. Interestingly, we could not find any counterexample with helical cubics, leaving us to conjecture that the converse of Proposition~\ref{prop:helicalequalmu} holds for helical cubics.

\begin{conjecture}
Any two cubic rational helical space curves with proportionality constants of equal modulus are similar.
\end{conjecture}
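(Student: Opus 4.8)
The plan is to classify the cubic helical curves up to similarity and to show that the only continuous invariant is the modulus $|\mu|$ of the proportionality constant. By Proposition \ref{prop:helicalequalmu} the condition $|\mu_1| = |\mu_2|$ is necessary, and an orientation-reversing similarity flips $\sgn(\mu)$, so it suffices to exhibit, for each value of $|\mu|$, a single similarity class. Because a M\"obius reparametrization affects neither $\CCC_j$ nor the notion of similarity (Theorem \ref{th-fund}, Lemma \ref{lem:kappatauphi}), I would first reduce to a polynomial parametrization $\bfx(t) = (X,Y,Z)$. Writing the helix axis as a unit vector $\bfa$ and the pitch angle as $\psi$, the defining relation $\langle \bfx',\bfa\rangle = \cos\psi\,\|\bfx'\|$ forces $\|\bfx'\|$ to be a polynomial (the left-hand side is, and $\cos\psi\neq 0$ since the curve is nonplanar). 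Thus every helical cubic is a spatial Pythagorean-hodograph (PH) cubic, and conversely \cite{Farouki}.

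Next I would invoke the classification of PH cubics. After a rotation sending $\bfa$ to $(0,0,1)$ and a translation, the relation above splits as $Z' = \cos\psi\,\|\bfx'\|$ and $\|(X',Y')\| = \sin\psi\,\|\bfx'\|$, so the planar projection $(X,Y)$ is a planar PH cubic. The key cubic-specific fact is that, up to a plane similarity, the Tschirnhausen cubic is the unique planar PH cubic \cite{Farouki}; hence the horizontal projection of any helical cubic can be placed in a single normal form by a rotation about the axis together with a scaling. It is precisely the appearance of genuine moduli among higher-degree planar PH curves that one expects to make the converse fail at degree five, as in Example \ref{quint}.

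Once $(X,Y)$ is normalized, the common speed $\|\bfx'\|$ is fixed, and $Z(t) = \cos\psi\int \|\bfx'\|\,\rmd t$ is then determined up to a vertical translation. Evaluating \eqref{kt} on this normal form, I expect $\mu = \kappa/\tau$ to be a fixed monotone function of $\psi$, so that prescribing $|\mu|$ determines $\psi$ up to the involution $\psi \mapsto \pi - \psi$; this involution flips both $\cos\psi$ and $\sgn(\mu)$ and corresponds exactly to composing with a reflection. Therefore two helical cubics with $|\mu_1| = |\mu_2|$ reduce to the same normal form after suitable rotations, a scaling, a translation, and possibly one reflection, and are consequently similar.

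The step I expect to be hardest is the global patching: verifying that the plane similarity normalizing $(X,Y)$ lifts consistently to a space similarity of the whole curve, in particular that the horizontal scaling is compatible with the vertical component, and confirming that no continuous invariant survives beyond $\psi$. In the quaternion model $\bfx' = \mathcal{A}\,\boldsymbol{i}\,\mathcal{A}^*$ with $\mathcal{A}(t)$ linear, this amounts to proving that the similarity and reparametrization groups act transitively on each level set $\{|\mu| = \text{const}\}$ of linear quaternion polynomials, together with a careful treatment of the degenerate configurations and of the genuinely rational (non-polynomial) case.
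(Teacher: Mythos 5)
You should first be aware that the paper does \emph{not} prove this statement: it is stated as a Conjecture and explicitly left open. The authors' only evidence is experimental --- they constructed cubic helices by the method of \cite{Farouki} and failed to find a counterexample, in contrast with the quintic counterexample of Example \ref{quint} --- so there is no proof in the paper to compare yours against, and your submission must stand on its own. As written, it does not: it is a research plan whose decisive steps are flagged with ``I expect'' and ``this amounts to proving,'' in particular the transitivity of the similarity-plus-reparametrization action on the level sets of $|\mu|$. One part of your outline can be tightened immediately: the hoped-for monotone dependence of $\mu$ on the pitch angle $\psi$ is not something to verify on a normal form --- it is Lancret's theorem, which says a curve has constant slope $\psi$ against a fixed axis precisely when $\kappa/\tau$ is constant, with $|\mu| = |\tan\psi|$; so prescribing $|\mu|$ does pin down $\psi$ up to $\psi \mapsto \pi - \psi$, and that involution is indeed realized by the reflection $Z \mapsto -Z$, which flips $\sgn(\mu)$ consistently with Proposition \ref{prop:helicalequalmu}.

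The genuine gap is your opening reduction. A M\"obius reparametrization leaves the curve $\CCC_j$ pointwise unchanged --- that is the content of Theorem \ref{th-fund} --- so it can never convert a rational non-polynomial parametrization into a polynomial one: polynomiality is a property of the curve itself (a single place at infinity), and the conjecture concerns \emph{rational} helical cubics. Since a similarity is affine and hence preserves the plane at infinity, the configuration at infinity is a similarity invariant, so if a rational helical cubic with two or three places at infinity exists, your normal form (helical $\Rightarrow$ PH $\Rightarrow$ projection along the axis is the Tschirnhausen cubic up to plane similarity) simply does not reach it; you would need to prove that every rational helical cubic is polynomially parametrizable, or classify the exceptional cases separately, and the sketch only acknowledges this at the very end without an idea for it. Beyond that, the ``global patching'' you correctly identify as hardest is where all the substance lies: the plane similarity normalizing $(X,Y)$ must lift to a space similarity with horizontal and vertical scalings matched, and the residual freedom after fixing the Tschirnhausen projection must be shown finite (it is plausibly the finite symmetry group of that cubic, but this needs an argument). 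Your quaternion dimension count --- eight parameters for linear $\mathcal{A}$, minus rotations, scaling, affine reparametrization, and the fiber $\mathcal{A} \mapsto \mathcal{A}\,\e^{\boldsymbol{i}\theta}$, leaving one modulus --- is consistent with the conjecture but cannot exclude disconnected level sets, which is exactly the failure mode exhibited in degree five by Example \ref{quint}. In short: a plausible and probably completable strategy for the \emph{polynomial} case via the PH-cubic classification, but not a proof of the stated conjecture, which the paper itself leaves open.
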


Helical rational curves with nonzero proportionality constants do exist. See \cite[\S 23]{Farouki} and \cite{Fatma} for more examples and properties that allow to construct rational curves of this type.

\section{Detecting and finding similarities of rational space curves}\label{sec:detecting}
\noindent Let $\CCC_1,\CCC_2$ be curves with parametrizations $\bfx_1, \bfx_2$ as in \eqref{eq:parametrizations}. In this section we first present a criterion for whether $f(\CCC_1) = \CCC_2$ for a similarity $f$ with a given ratio $\lambda_0$. Next, to determine the potential ratios $\lambda_0$, we develop separate methods for helical and non-helical curves. The section concludes with a method for finding the similarities with a given ratio $\lambda_0$.

We will use the following standard notions for multivariate polynomials $p\in \RR[x_1, \ldots, x_n]$, viewed as a polynomial in $x_n$ with coefficients in $\RR[x_1,\ldots, x_{n-1}]$. The \emph{leading term} of $p$ with respect to $x_n$ is the monomial of $p$ with highest degree in $x_n$, and its coefficient is called the \emph{leading coefficient}. Moreover, the \emph{content} of $p$ with respect to $x_n$ is the greatest common divisor of its coefficients, viewed as elements of $\RR[x_1,\ldots,x_{n-1}]$.

\subsection{A criterion and algorithm for detecting similarity}
\noindent Since $\kappa_{\bfx_i}^2$ and $\tau_{\bfx_i}$, with $i = 1,2$, are rational, we can write
\[\kappa_{\bfx_i}^2(t) =: \frac{A_i(t)}{B_i(t)}, \qquad \tau_{\bfx_i}(t) =: \frac{C_i(t)}{D_i(t)}, \qquad i = 1,2,\]
for coprime pairs $(A_i, B_i)$ and $(C_i, D_i)$, $i = 1,2$, of polynomials. Let
\begin{equation}\label{eq:KT}
\begin{aligned}
K_\lambda(t,s) :=\ & A_1(t)B_2(s) - \lambda^2 \cdot A_2(s)B_1(t),\\
T_\lambda(t,s) :=\ & C_1(t)D_2(s) - \lambda   \cdot C_2(s)D_1(t)
\end{aligned}
\end{equation}
be the result of clearing denominators in the expressions
$\kappa_{\bfx_1}^2(t) - \lambda^2 \kappa_{\bfx_2}^2 (s) = 0$
and $\tau_{\bfx_1}(t) - \lambda \tau_{\bfx_2}(s) = 0$. Note that $K_{-\lambda} = K_{\lambda}$. For a fixed $\lambda$, we consider the bivariate greatest common divisor and $s$-resultant

\begin{equation}\label{eq:GR}
G_\lambda := \gcd(K_\lambda, T_\lambda) ,\qquad R_\lambda := \Res_s(K_\lambda, T_\lambda). 
\end{equation}

To any M\"obius transformation $\varphi$ as in \eqref{eq:Moebius}, associate the \emph{M\"obius-like} polynomial
\begin{equation}\label{eq:F}
F(t, s) := (ct + d)s - (at + b),\qquad ad - bc\neq 0,
\end{equation}
as the result of clearing denominators in $s - \varphi(t) = 0$. Note that $F$ is irreducible since $ad - bc\neq 0$. 

The following theorem provides a criterion for similarity of $\CCC_1$ and $\CCC_2$ with a given ratio.

\begin{theorem} \label{funda-sym}
Let $\bfx_1,\bfx_2$ as in \eqref{eq:parametrizations} define curves $\CCC_1, \CCC_2$. There exists a similarity $f(\tx) = \lambda_0\mQ\tx + \bfb$ such that $f(\CCC_1) = \CCC_2$ if and only if there exists a polynomial $F$ of type \eqref{eq:F} dividing $G_{\lambda_0}$, associated with a M\"obius transformation $\varphi$ satisfying \eqref{eq:arc} with $\lambda = \lambda_0$.
\end{theorem}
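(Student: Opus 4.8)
The plan is to establish both implications, throughout using Theorem~\ref{th-fund} to translate between the spatial similarity $f$ and the M\"obius transformation $\varphi$ (encoded by its M\"obius-like polynomial $F$), and using the curvature--torsion relations of Lemma~\ref{lem:curvaturetorsionsimilar} to convert the geometric condition $f(\CCC_1)=\CCC_2$ into the algebraic divisibility $F\mid G_{\lambda_0}$. The two substantial tools I expect to invoke are the irreducibility of $F$, which upgrades pointwise vanishing to polynomial divisibility, and the classical fundamental theorem of space curves, which reconstructs a rigid motion from matching differential invariants.

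For the direct implication, assume $f(\tx)=\lambda_0\mQ\tx+\bfb$ with $f(\CCC_1)=\CCC_2$. Theorem~\ref{th-fund} yields a unique M\"obius transformation $\varphi$ (with real coefficients) satisfying $f\circ\bfx_1=\bfx_2\circ\varphi$; let $F$ be its M\"obius-like polynomial~\eqref{eq:F}. Lemma~\ref{lem:curvaturetorsionsimilar} gives the rational identities $\kappa_{\bfx_1}^2(t)=\lambda_0^2\,\kappa_{\bfx_2}^2(\varphi(t))$ and $\tau_{\bfx_1}(t)=\lambda_0\,\tau_{\bfx_2}(\varphi(t))$; clearing denominators, these state exactly that $K_{\lambda_0}(t,\varphi(t))\equiv 0$ and $T_{\lambda_0}(t,\varphi(t))\equiv 0$. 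Since $F$ has degree one in $s$, dividing $K_{\lambda_0}$ by $F$ in $\RR(t)[s]$ leaves a remainder lying in $\RR(t)$ and equal to $K_{\lambda_0}(t,\varphi(t))$, which vanishes; hence $F\mid K_{\lambda_0}$, and likewise $F\mid T_{\lambda_0}$, by Gauss's lemma. Therefore $F\mid\gcd(K_{\lambda_0},T_{\lambda_0})=G_{\lambda_0}$, and $\varphi$ satisfies~\eqref{eq:arc} with $\lambda=\lambda_0$ by Corollary~\ref{arclength}.

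For the converse, suppose $F\mid G_{\lambda_0}$ with $F$ associated to a M\"obius transformation $\varphi$ obeying~\eqref{eq:arc}. Then $F\mid K_{\lambda_0}$ and $F\mid T_{\lambda_0}$, and reversing the computation above (using $\kappa\ge 0$) yields $\kappa_{\bfx_1}=|\lambda_0|\,(\kappa_{\bfx_2}\circ\varphi)$ and $\tau_{\bfx_1}=\lambda_0\,(\tau_{\bfx_2}\circ\varphi)$. Put $\bfy:=\bfx_2\circ\varphi$ and $\boldsymbol{z}:=\lambda_0\bfx_1$. By Lemma~\ref{lem:kappatauphi} applied to $\bfy$ and Lemma~\ref{lem:kappatauf} applied to $\boldsymbol{z}$ (with $\mQ=\mI$ and ratio $\lambda_0$), one finds $\kappa_{\boldsymbol{z}}=\kappa_{\bfy}$ and $\tau_{\boldsymbol{z}}=\tau_{\bfy}$; moreover~\eqref{eq:arc} gives $\|\boldsymbol{z}'\|=|\lambda_0|\cdot\|\bfx_1'\|=\|\bfy'\|$, so $\boldsymbol{z}$ and $\bfy$ also have equal speed. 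On a dense interval where both are regular with $\kappa>0$, reparametrizing by the common arc length turns this into two unit-speed curves with identical curvature and signed torsion, so the fundamental theorem of space curves furnishes an orientation-preserving isometry $h(\tx)=\mR\tx+\bft$, $\mR\in SO(3)$, with $h\circ\boldsymbol{z}=\bfy$. Then $f(\tx):=\lambda_0\mR\tx+\bft$ is a similarity of ratio $\lambda_0$ with $f\circ\bfx_1=\bfx_2\circ\varphi$; since $\varphi$ is birational, the right-hand side parametrizes $\CCC_2$, and hence $f(\CCC_1)=\CCC_2$.

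I expect the converse to be the crux. The delicate point is the orientation bookkeeping in the fundamental theorem: it is precisely because curvature is nonnegative whereas torsion is signed that the equality $\tau_{\boldsymbol{z}}=\tau_{\bfy}$ forces $h$ to be a rotation rather than a reflection, so that composing with the scaling $\lambda_0\mI$ returns a genuine similarity. Equally essential is the arc-length condition~\eqref{eq:arc}: agreement of $\kappa$ and $\tau$ as functions of the parameter $t$ does not by itself determine a curve up to rigid motion, and one must match the speeds so that the induced arc-length correspondences coincide before the fundamental theorem can be applied. Finally, since every identity here is between rational maps with only finitely many poles and base points, the pointwise reasoning need hold only on a dense interval and then propagates to all of $\RR$ by the identity principle for rational functions.
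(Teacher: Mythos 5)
Your proposal is correct and takes essentially the same route as the paper: the forward direction via Theorem~\ref{th-fund}, Lemma~\ref{lem:curvaturetorsionsimilar}, and the irreducibility/linearity of $F$ to force $F \mid G_{\lambda_0}$ (the paper invokes B\'ezout's theorem where you use division in $\RR(t)[s]$ plus Gauss's lemma, an equivalent justification), with \eqref{eq:arc} following from orthogonality of $\mQ$; and the converse via matching curvature and torsion, arc-length reparametrization made possible by \eqref{eq:arc}, the fundamental theorem of space curves to produce the orientation-preserving isometry, and the identity principle to pass from an interval to $f(\CCC_1)=\CCC_2$. Your rescaling of $\bfx_1$ by $\lambda_0$ is the mirror image of the paper's rescaling $\tbfx_2 = \lambda_0^{-1}\bfx_2$, and your remarks on the sign of the torsion and on the necessity of the speed condition correctly identify the same delicate points the paper's proof handles.
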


\begin{proof}
``$\Longrightarrow$'': If $f(\CCC_1) = \CCC_2$ for some similarity $f(\tx)=\lambda_0\mQ\tx+\bfb$, by Theorem~\ref{th-fund} there exists a M\"obius 
transformation $\varphi$ such that $f \circ \bfx_1 = \bfx_2 \circ \varphi$. Let $F$ be the M\"obius-like polynomial associated with $\varphi$. The points $(t,s)$ for which $K_{\lambda_0}(t,s) = T_{\lambda_0}(t,s) = 0$ are the points satisfying $\kappa_{\bfx_1}(t) = |\lambda_0| \kappa_{\bfx_2} (s)$ and $\tau_{\bfx_1}(t) = \lambda_0 \tau_{\bfx_2}(s)$. By Lemma \ref{eq:curvaturetorsionsimilar}, this includes the zero set $\{(t,s) : s = \varphi(t)\}$ of $F(t,s)$. Since $F$ is irreducible, B\'ezout's theorem implies that $F$ divides $K_{\lambda_0}$ and $T_{\lambda_0}$, and therefore $G_{\lambda_0}$ as well. Moreover, since $\mQ$ is orthogonal,
\[ \|(\bfx_2 \circ \varphi)'\| = \|(f\circ \bfx_1)'\| = \| \lambda_0 \mQ \bfx_1'\| = |\lambda_0|\cdot \|\bfx_1'\|. \]

``$\Longleftarrow$'':
Let $\varphi$ be the transformation associated to $F$. Let $t_0\in I\subset \RR$ be such that $\bfx_1(t)$ is a regular point on $\CCC_1$ for every $t\in I$, and consider the arc length function
\[
s = s(t) := \int_{t_0}^t \| \bfx_1'(t)\|\rmd t,\qquad t\in I, \]
which (locally) has an infinitely differentiable inverse $t = t(s)$. For $\tbfx_2 := \lambda_0^{-1}\bfx_2$,
\[\left\|\frac{\rmd}{\rmd s} (\bfx_1 \circ t)\right\|
= \left\|\frac{\rmd \bfx_1}{\rmd t} \frac{\rmd t}{\rmd s}\right\|
= 1
= \frac{1}{|\lambda_0|}\left\|\frac{\rmd }{\rmd t} (\bfx_2 \circ \varphi) \frac{\rmd t}{\rmd s}\right\|
= \left\|\frac{\rmd }{\rmd s} (\tbfx_2 \circ \varphi \circ t)\right\|,
\]
by \eqref{eq:arc}, so that both $\bfx_1\circ t$ and $\tbfx_2 \circ \varphi \circ t$ are parametrized by arc length.

Since $F$ divides $G_{\lambda_0}$, any zero $\big(t, \varphi(t)\big)$ of $F$ is also a zero of $K_{\lambda_0}$ and $T_{\lambda_0}$, implying that $\kappa_{\bfx_1} = |\lambda_0|\cdot \kappa_{\bfx_2} \circ \varphi$ and $\tau_{\bfx_1} = \lambda_0 \tau_{\bfx_2} \circ \varphi$. Together with Lemmas~\ref{lem:kappatauf} and \ref{lem:kappatauphi}, this yields
\[ \kappa_{\bfx_1\circ t}
 = \kappa_{\bfx_1} \circ t
 = |\lambda_0|\cdot \kappa_{\bfx_2} \circ \varphi \circ t 
 = \kappa_{\tbfx_2} \circ \varphi \circ t 
 = \kappa_{\tbfx_2\circ \varphi \circ t}, \]
\[ \tau_{\bfx_1\circ t}
 = \tau_{\bfx_1} \circ t
 = \lambda_0\cdot \tau_{\bfx_2} \circ \varphi \circ t
 = \tau_{\tbfx_2} \circ \varphi \circ t
 = \tau_{\tbfx_2 \circ \varphi \circ t}. \]
The fundamental theorem of space curves \cite[\S 1--5]{Docarmo} then implies that there exists an isometry $\tilde{f}(\tx) = \mQ \tx + \bfb$, with $\det(\mQ) = 1$, such that $\tilde{f} \circ \bfx_1\circ t = \tbfx_2\circ \varphi\circ t$ on $s(I)$. In terms of the similarity $f(\tx) := \lambda_0 \tilde{f}(\tx)$, it follows that
\[ f\big(\bfx_1(t)\big) = \lambda_0 \tilde{f} \big(\bfx_1(t)\big) = \lambda_0\tbfx_2\big(\varphi(t)\big) = \bfx_2\big(\varphi(t)\big),\qquad t\in I. \]
Therefore the irreducible algebraic curves $f(\CCC_1)$ and $\CCC_2$ have infinitely many points in common, implying $f(\CCC_1) = \CCC_2$.
\end{proof} 

If some tentative values for $\lambda_0$ are known, similarity of curves can be quickly detected with this criterion, by checking if $G_{\lambda_0}$ has some M\"obius-like factor. In order to do this, taking into account that $\lambda_0$ might be an algebraic number, we can use techniques for factoring bivariate polynomials with coefficients in an algebraic number field. For instance, the command {\tt AFactor} in Maple 18 is fast and efficient. To illustrate this, it computes the factorization
\[
\left(s^2\sqrt{2}t-s^2t^2-\frac{1}{2}s^2+t^2\right)\cdot \left(st-\frac{1}{3}\sqrt{3}\right)\cdot \left(st+\frac{1}{3}\sqrt{3}\right)\cdot p(t,s),
\]
where $p(t,s)$ is a dense polynomial in $t,s$ of total degree 18, in 0.109 seconds using the machine described in Section \ref{sec:experimentation}. By Theorem \ref{funda-sym}, whenever the associated M\"obius transformation satisfies \eqref{eq:arc}, the existence of such a factor is equivalent to $\CCC_1$ and $\CCC_2$ being similar. 

Thus we arrive at Algorithm {\tt Similar3D} for checking whether $\CCC_1$ and $\CCC_2$ are similar. Note that by Theorem \ref{preserve}, any `self-similarity' of an irreducible algebraic space curve that is not a line is a symmetry. Therefore, for $\bfx_1 = \bfx_2$ one has $|\lambda| = 1$, and Algorithm \texttt{Similar3D} reduces to the algorithm presented in \cite{AHM15} for detecting symmetries of algebraic space curves.

\begin{algorithm}[h!]
\begin{algorithmic}[1]
\REQUIRE Two proper parametrizations $\bfx_1,\bfx_2$ of two space curves $\CCC_1,\CCC_2$.
\ENSURE Whether there exists a similarity $f(\tx) = \lambda\mQ\tx + \bfb$ with $f(\CCC_1) = \CCC_2$.
\STATE If $\CCC_1$ and $\CCC_2$ are both lines or both circles, return {\tt TRUE}. Otherwise:
\STATE If $\CCC_1$ or $\CCC_2$ is a circle or a line, return {\tt FALSE}.
\STATE Find the curvatures $\kappa_{\bfx_1},\kappa_{\bfx_2}$ and torsions $\tau_{\bfx_1},\tau_{\bfx_2}$ from \eqref{kt}. 
\STATE Find the polynomials $K_\lambda$ and $T_\lambda$ from \eqref{eq:KT}.
\STATE Find $\mu_1 := \kappa_{\bfx_1}/\tau_{\bfx_1}$ and $\mu_2 := \kappa_{\bfx_2}/\tau_{\bfx_2}$.
\STATE If only one among $\mu_1,\mu_2$ is constant, return {\tt FALSE}.
\STATE If $\mu_1, \mu_2$ are both constant (\emph{helical case}):
\begin{itemize}
\item[\footnotesize{7.1}]
If $|\mu_1|\neq|\mu_2|$  return {\tt False}. Otherwise:
\item[\footnotesize{7.2}] Let $G_{\lambda} := T_\lambda$.
\item[\footnotesize{7.3}] Choose $t_0\in \QQ$ such that the evaluation at $t=t_0$ of the leading coefficient of $G_{\lambda}(t,s)$ with respect to $s$ is not identically zero.
\item[\footnotesize{7.4}] Find the sets $\SSS_0, \SSS_1, \SSS_2$ of tentative $\lambda$ using the method in Section \ref{sec:detecthelical}.
\item[\footnotesize{7.5}] For each $\lambda \in \SSS_0\cup \SSS_1\cup \SSS_2$, check whether $G_\lambda$ contains a M\"obius-like factor $F$ for which the associated M\"obius transformation $\varphi$ satisfies \eqref{eq:arc}.\!\!\!\!
\item[\footnotesize{7.6}] If some $\lambda$ succeeds, return {\tt True}, otherwise return {\tt False}.
\end{itemize}
\STATE If $\mu_1, \mu_2$ are not constant (\emph{non-helical case}):
\begin{itemize}
\item[\footnotesize{8.1}] Find the resultant $R_\lambda = \Res_s (K_\lambda, T_\lambda)$.
\item[\footnotesize{8.2}] Find the set $\SSS_0$ of tentative $\lambda$ using the method in Section \ref{method-non-helical}.
\item[\footnotesize{8.3}] For each $\lambda\in \SSS_0$, check whether $G_\lambda$ contains a M\"obius-like factor $F$ for which the associated M\"obius transformation $\varphi$ satisfies \eqref{eq:arc}.
\item[\footnotesize{8.4}] In the affirmative case, return {\tt True}, otherwise return {\tt False}.
\end{itemize}
\end{algorithmic}
\caption*{{\bf Algorithm} {\tt Similar3D}}
\end{algorithm}

It remains to compute the sets $\SSS_0, \SSS_1, \SSS_2$ of tentative values for $\lambda_0$ in the next two sections, where it is necessary to distinguish between helical and non-helical curves.

\subsection{Finding the ratio for non-helical curves}\label{method-non-helical}
\noindent Assume $\bfx_1,\bfx_2$ define non-helical curves. By the following proposition, there are only finitely many nonzero $\lambda$ for which the resultant $R_\lambda$ is identically zero.

\begin{proposition} \label{prop:why}
The resultant $R_\lambda$ is identically zero if and only if $\CCC_1$, $\CCC_2$ are helical curves with proportionality constants $\mu_1,\mu_2$ satisfying $|\mu_1|=|\mu_2|$.
\end{proposition}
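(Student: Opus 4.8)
The plan is to reduce the statement to a common-factor assertion and then treat $\lambda$ as a free indeterminate. Writing $R_\lambda = \Res_s(K_\lambda, T_\lambda)$ as an element of $\RR[t,\lambda]$, I read ``identically zero'' as vanishing of this polynomial in \emph{both} $t$ and $\lambda$; this is the interpretation that makes the statement correct (see the last paragraph). Regarding $K_\lambda, T_\lambda$ as polynomials in $s$ over the field $\RR(t,\lambda)$, and noting that their leading coefficients in $s$ cannot cancel (the competing terms carry different powers of $\lambda$), one has $\deg_s K_\lambda = \deg \kappa_{\bfx_2}^2$ and $\deg_s T_\lambda = \deg \tau_{\bfx_2}$. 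After disposing by direct inspection of the degenerate cases in which $\kappa_{\bfx_2}^2$ or $\tau_{\bfx_2}$ (or the analogous quantities for $\CCC_1$) is constant, both degrees are positive, and the standard resultant criterion gives the equivalence: $R_\lambda = 0$ in $\RR[t,\lambda]$ if and only if $K_\lambda$ and $T_\lambda$ share a common root $s^\ast \in \overline{\RR(t,\lambda)}$.

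For the reverse implication I would use that helicality with $|\mu_1|=|\mu_2|$ means $\kappa_{\bfx_i}^2 = \mu_i^2\tau_{\bfx_i}^2$, so $A_i/B_i = \mu_i^2 C_i^2/D_i^2$. Coprimality of $(A_i,B_i)$ and of $(C_i,D_i)$ forces $A_i = c_i\mu_i^2 C_i^2$ and $B_i = c_i D_i^2$ for nonzero constants $c_i$. Substituting and using $\mu_1^2 = \mu_2^2 =: \mu^2$ yields the factorization
\[
K_\lambda = c_1 c_2\mu^2\,(C_1 D_2 - \lambda C_2 D_1)(C_1 D_2 + \lambda C_2 D_1) = c_1 c_2\mu^2\, T_\lambda\cdot(C_1 D_2 + \lambda C_2 D_1),
\]
so that $T_\lambda \mid K_\lambda$ and hence $R_\lambda = 0$. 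This direction is a short computation.

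The substantive direction is the forward one. Assuming $R_\lambda = 0$, take the common root $s^\ast$ of $K_\lambda,T_\lambda$. Clearing denominators (and checking via coprimality that none of $B_1(t),B_2(s^\ast),D_1(t),D_2(s^\ast)$ vanishes) turns $K_\lambda(t,s^\ast)=T_\lambda(t,s^\ast)=0$ into
\[
\kappa_{\bfx_1}^2(t) = \lambda^2\,\kappa_{\bfx_2}^2(s^\ast),\qquad \tau_{\bfx_1}(t) = \lambda\,\tau_{\bfx_2}(s^\ast).
\]
The crucial point is that $\lambda$ is free: solving the torsion relation gives $\lambda = \tau_{\bfx_1}(t)/\tau_{\bfx_2}(s^\ast)\in\RR(t,s^\ast)$ (here $\tau_{\bfx_2}(s^\ast)\neq 0$, for otherwise $\tau_{\bfx_1}(t)=0$, impossible since $\CCC_1$ is nonplanar and $t$ is transcendental). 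Hence $\RR(t,\lambda,s^\ast)=\RR(t,s^\ast)$; since $s^\ast$ is algebraic over $\RR(t,\lambda)$ this field has transcendence degree $2$ over $\RR$, so $t$ and $s^\ast$ are algebraically independent. Dividing the curvature relation by the square of the torsion relation gives
\[
\frac{\kappa_{\bfx_1}^2(t)}{\tau_{\bfx_1}^2(t)} = \frac{\kappa_{\bfx_2}^2(s^\ast)}{\tau_{\bfx_2}^2(s^\ast)},
\]
an element of $\RR(t)\cap\RR(s^\ast)$. Because $t$ and $s^\ast$ are algebraically independent, this intersection is $\RR$, so each side is the same constant $\mu^2$. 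Thus $\kappa_{\bfx_i}^2/\tau_{\bfx_i}^2$ is constant for $i=1,2$ and the two constants coincide; since $\mu_i = \kappa_{\bfx_i}/\tau_{\bfx_i}$ and $\kappa_{\bfx_i}\geq 0$ pins the sign of $\mu_i$ to that of $\tau_{\bfx_i}$, the curves are helical with $|\mu_1|=|\mu_2|$.

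The main obstacle is precisely the interpretation of ``identically zero'' together with the transcendence bookkeeping. If $\lambda$ were fixed rather than free, the conclusion would fail: for any pair of \emph{similar} curves the similarity ratio $\lambda_0$ already produces, via \eqref{eq:curvaturetorsionsimilar} and the B\'ezout argument of Theorem~\ref{funda-sym}, a common M\"obius-like factor of $K_{\lambda_0}$ and $T_{\lambda_0}$, whence $R_{\lambda_0}=0$ without the curves being helical. Treating $\lambda$ as an indeterminate is exactly what forces $t$ and $s^\ast$ to be algebraically independent and thereby upgrades the identity $\kappa_{\bfx_1}^2/\tau_{\bfx_1}^2 = \kappa_{\bfx_2}^2/\tau_{\bfx_2}^2$ to equality of \emph{constants}. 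A secondary, routine obstacle is the separate verification of the degenerate cases in which some $\kappa_{\bfx_i}^2$ or $\tau_{\bfx_i}$ is constant, where the resultant/common-factor equivalence and the division steps must be re-examined by hand.
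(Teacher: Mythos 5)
Your proof is correct, and its forward implication takes a genuinely different route from the paper's. Both arguments hinge on reading ``identically zero'' with $\lambda$ as an indeterminate, i.e., $R_\lambda = 0$ in $\RR[t,\lambda]$: the paper never says this explicitly (it defines $G_\lambda, R_\lambda$ ``for a fixed $\lambda$'' in \eqref{eq:GR}), but its proof manifestly treats $\lambda$ as a variable, and your remark that the fixed-$\lambda$ reading is false --- since any similar non-helical pair already forces $R_{\lambda_0}\equiv 0$ in $t$ via the M\"obius-like factor of Theorem~\ref{funda-sym} --- is exactly right. Your ``$\Longleftarrow$'' direction coincides with the paper's computation (with the constants $c_i$ from coprimality made explicit, which the paper suppresses). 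For ``$\Longrightarrow$'', the paper stays inside polynomial algebra: it shows every nonconstant factor of $K_\lambda$ or $T_\lambda$ must involve $\lambda$ (by specializing at $\lambda = 0$ against coprimality), concludes that $G_\lambda$ is a constant multiple of $T_\lambda$, uses $K_\lambda = K_{-\lambda}$ to obtain $K_\lambda = \nu\, T_\lambda T_{-\lambda}$, and compares coefficients of $\lambda^0$ and $\lambda^2$ to get $A_1B_2 = \nu C_1^2 D_2^2$ and $A_2B_1 = \nu C_2^2 D_1^2$; the resulting identity $\kappa_{\bfx_1}^2(t)/\tau_{\bfx_1}^2(t) = \kappa_{\bfx_2}^2(s)/\tau_{\bfx_2}^2(s)$ then involves $s$ as an honest indeterminate, so constancy is immediate. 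You instead pass to a common root $s^\ast \in \overline{\RR(t,\lambda)}$ and recover the algebraic independence of $t$ and $s^\ast$ from the observation $\lambda = \tau_{\bfx_1}(t)/\tau_{\bfx_2}(s^\ast)\in\RR(t,s^\ast)$ together with a transcendence-degree count --- a clean substitute that bypasses the factor analysis entirely, including the implicit point in the paper that $T_\lambda$, having $\lambda$-degree one and no $\lambda$-free factors, is irreducible up to constants. What the paper's route buys is the structural byproduct $\gcd(K_\lambda,T_\lambda) = T_\lambda$, invoked verbatim at the start of Section~\ref{sec:detecthelical} (step 7.2 of the algorithm sets $G_\lambda := T_\lambda$); note, however, that your reverse-direction factorization $K_\lambda = c_1c_2\mu^2\,T_\lambda T_{-\lambda}$ delivers $T_\lambda \mid K_\lambda$ and hence the same fact once helicality is established. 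Your deferral of the degenerate cases ($\kappa_{\bfx_2}^2$ or $\tau_{\bfx_2}$ constant, where $\deg_s K_\lambda$ or $\deg_s T_\lambda$ drops to zero) is acceptable: they are indeed routine --- a rational nonplanar curve cannot be helical with constant torsion, as that would force a circular helix, which is not algebraic --- and the paper does not treat them at all.
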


\begin{proof} ``$\Longleftarrow$'': Since the proportionality constants have the same absolute value $\mu:=|\mu_1|=|\mu_2|$,
\[ \frac{A_i(t)}{B_i(t)} = \kappa_{{\bfx}_i}^2(t) = \mu^2\cdot \tau_{\bfx_i}^2(t) = \mu^2\cdot \frac{C_i^2(t)}{D_i^2(t)}, \]
with $\mu\neq 0$ because of Lemma \ref{lem:nonzeromu}. Therefore
\begin{align*}
K_{\lambda}(t,s) & = \mu^2\cdot \big(C_1^2(t)D_2^2(s) - \lambda^2C_2^2(s)D_1^2(t)\big)\\
                 & = \mu^2\cdot \big(C_1(t)D_2(s) - \lambda C_2(s)D_1(t)\big) \cdot \big(C_1(t)D_2(s) + \lambda C_2(s)D_1(t) \big)\\
                 & = \mu^2\cdot T_{\lambda}(t,s) \cdot T_{-\lambda}(t,s).
\end{align*}
Hence $K_\lambda$ has a non-trivial factor, depending on $s$, in common with both $T_{\lambda}$ or $T_{-\lambda}$, since $K_\lambda = K_{-\lambda}$. It follows that $R_\lambda$ is identically zero.

``$\Longrightarrow$'': If $R_\lambda$ is identically zero then $K_\lambda$, $T_\lambda$ have nontrivial greatest common divisor $G_\lambda$. Suppose $T_\lambda$ has a factor $S$ not depending on $\lambda$. Then $S$ divides both $T_\lambda$ and $T_0(t,s) = C_1(t) D_2(s)$, and therefore also $C_2(s) D_1(t)$, contradicting that $C_1,D_1$ and $C_2,D_2$ are coprime. A similar argument shows that any nonconstant factor of $K_\lambda$ depends on $\lambda$. It follows that $G_\lambda$ is a linear polynomial in $\lambda$ in constant proportion with $T_\lambda$. 
Since $G_\lambda, G_{-\lambda}$ both divide $K_\lambda = K_{-\lambda}$, which is a quadratic polynomial in $\lambda$, it follows that
\[ K_{\lambda} = \nu \cdot T_\lambda \cdot T_{-\lambda}\]
for some nonzero constant $\nu$. Comparing coefficients it follows that 
\[ A_1(t) B_2(s) = \nu \cdot C_1^2(t) D_2^2(s), \qquad A_2(s) B_1(t) = \nu \cdot C_2^2(s) D_1^2(t). \]
Dividing these equations yields
\[ \frac{\kappa_{\bfx_1}^2(t)}{\kappa_{\bfx_2}^2(s)} = \frac{A_1(t) B_2(s)}{B_1(t)A_2(s)} = \frac{C_1^2(t) D_2^2(s)}{D_1^2(t) C_2^2(s)} = \frac{\tau_{\bfx_1}^2(t)}{\tau_{\bfx_2}^2(s)}, \]
or equivalently
\[ \frac{\kappa_{\bfx_1}^2(t)}{\tau_{\bfx_1}^2(t)} = \frac{\kappa_{\bfx_2}^2(s)}{\tau_{\bfx_2}^2(s)}, \]
which must be constant. After taking square roots the statement follows.
\end{proof}

Let $\Lambda^{\star}(\lambda)$ be the content of the resultant $\mbox{Res}_s(K_{\lambda},T_{\lambda})$, viewed as a polynomial in $t$ with coefficients depending on $\lambda$. Let $\mbox{lc}_s(K_{\lambda})$, $\mbox{lc}_s(T_{\lambda})$ be the leading coefficients with respect to $s$ of $K_{\lambda},T_{\lambda}$. 
Notice that whenever $\mbox{lc}_s(K_{\lambda})$, $\mbox{lc}_s(T_{\lambda})$ do not vanish identically and simultaneously for $\lambda = \lambda_0$, then $R_{\lambda_0}$ is the result of specializing $\mbox{Res}_s(K_{\lambda},T_{\lambda})$ at $\lambda=\lambda_0$ (see Lemma 4.3.1 of \cite{winkler}). Let $\Lambda(\lambda)$ be the product of $\Lambda^{\star}(\lambda)$ and the content of $\gcd(\mbox{lc}_s(K_{\lambda}),\mbox{lc}_s(T_{\lambda}))$ with respect to $t$. Let $\SSS_0$ be the nonzero real roots of $\Lambda$.

\begin{proposition}\label{howto}
Let $\bfx_1, \bfx_2$ define non-helical curves $\CCC_1, \CCC_2$ satisfying $f(\CCC_1) = \CCC_2$ for some similarity $f(\tx) = \lambda_0\mQ\tx + \bfb$. Then $\lambda_0\in \SSS_0$.
\end{proposition}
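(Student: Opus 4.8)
The plan is to show that $\Lambda(\lambda_0) = 0$; since $\lambda_0$ is a nonzero real number, being the ratio of a genuine similarity, this immediately places $\lambda_0$ in the set $\SSS_0$ of nonzero real roots of $\Lambda$. First I would invoke the hypothesis $f(\CCC_1) = \CCC_2$ together with the forward implication of Theorem~\ref{funda-sym} (equivalently, Lemma~\ref{lem:curvaturetorsionsimilar} combined with the B\'ezout argument in its proof) to produce a M\"obius-like polynomial $F(t,s) = (ct+d)s - (at+b)$, with $ad - bc \neq 0$, dividing both $K_{\lambda_0}$ and $T_{\lambda_0}$. The condition $ad - bc \neq 0$ forces $ct + d \not\equiv 0$, so $F$ has genuine degree $1$ in $s$; hence $K_{\lambda_0}$ and $T_{\lambda_0}$, viewed as polynomials in $s$ over the field $\RR(t)$, share a common factor of positive degree in $s$. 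Consequently their $s$-resultant vanishes, i.e.\ $R_{\lambda_0} = \Res_s(K_{\lambda_0}, T_{\lambda_0}) \equiv 0$ as a polynomial in $t$.

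The main obstacle is that this $R_{\lambda_0}$, the resultant of the \emph{already specialized} polynomials, need not coincide with the specialization at $\lambda = \lambda_0$ of the generic resultant $\Res_s(K_\lambda, T_\lambda)$: the two agree only when the leading coefficients do not both degenerate, as recorded in Lemma 4.3.1 of \cite{winkler}. I would therefore split into two cases according to whether $\mathrm{lc}_s(K_{\lambda_0})$ and $\mathrm{lc}_s(T_{\lambda_0})$ both vanish identically in $t$, which is precisely the dichotomy that the two factors defining $\Lambda$ are designed to handle.

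In the non-degenerate case, where at least one leading coefficient survives the specialization at $\lambda_0$, Lemma 4.3.1 of \cite{winkler} gives $\Res_s(K_\lambda, T_\lambda)\big|_{\lambda = \lambda_0} = R_{\lambda_0} \equiv 0$ in $t$. Writing $\Res_s(K_\lambda, T_\lambda) = \Lambda^{\star}(\lambda)\cdot P(t,\lambda)$ with $P$ its primitive part with respect to $t$, I would note that $P(t,\lambda_0) \not\equiv 0$: were it zero, every $t$-coefficient of $P$ would vanish at $\lambda_0$, hence be divisible by $(\lambda - \lambda_0)$, contradicting that the content of $P$ with respect to $t$ equals $1$. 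Therefore $\Lambda^{\star}(\lambda_0) = 0$, whence $\Lambda(\lambda_0) = 0$. In the degenerate case, $\mathrm{lc}_s(K_{\lambda_0}) \equiv \mathrm{lc}_s(T_{\lambda_0}) \equiv 0$ in $t$ means that $(\lambda - \lambda_0)$ divides both $\mathrm{lc}_s(K_\lambda)$ and $\mathrm{lc}_s(T_\lambda)$ in $\RR[t,\lambda]$, hence divides $\gcd\big(\mathrm{lc}_s(K_\lambda), \mathrm{lc}_s(T_\lambda)\big)$; writing this gcd as $(\lambda - \lambda_0)\cdot h$ shows that $(\lambda - \lambda_0)$ divides its content with respect to $t$, which is exactly the second factor of $\Lambda$, so again $\Lambda(\lambda_0) = 0$.

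In either case $\Lambda(\lambda_0) = 0$, and since $\lambda_0 \in \RR\setminus\{0\}$ we conclude $\lambda_0 \in \SSS_0$. I would finish by remarking that the non-helical hypothesis is used, via Proposition~\ref{prop:why}, to guarantee that $\Res_s(K_\lambda, T_\lambda)$ is not identically zero, so that the content/primitive-part decomposition is meaningful, $\Lambda$ is a genuinely nonzero polynomial in $\lambda$, and $\SSS_0$ is the \emph{finite} set of candidate ratios in which $\lambda_0$ has now been shown to lie.
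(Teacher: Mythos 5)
Your argument is correct and follows essentially the same route as the paper: produce the M\"obius-like factor $F$ of $K_{\lambda_0}$ and $T_{\lambda_0}$ via Lemma~\ref{lem:curvaturetorsionsimilar} (the paper phrases this as $F$ dividing $G_{\lambda_0}$), deduce that the specialized resultant vanishes identically, and conclude $\Lambda(\lambda_0)=0$. The paper compresses that final implication into the single clause ``which implies $\Lambda(\lambda_0)=0$,'' so your two-case analysis---Winkler's specialization lemma together with the primitive-part argument in the non-degenerate case, and divisibility of the content of $\gcd\big(\mathrm{lc}_s(K_\lambda),\mathrm{lc}_s(T_\lambda)\big)$ by $(\lambda-\lambda_0)$ in the degenerate case---is exactly the justification that the two-factor definition of $\Lambda$ was designed to supply, merely spelled out in full.
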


\begin{proof}
By Lemma \ref{lem:curvaturetorsionsimilar}, there exists a M\"obius transformation $\varphi$ such that
\[ K_{\lambda_0} \big(t, \varphi(t)\big) = T_{\lambda_0} \big(t, \varphi(t)\big) = 0,\ 
\text{and therefore }G_{\lambda_0}\big(t, \varphi(t)\big) = 0,\]
hold identically. Hence the M\"obius-like polynomial $F$ associated to $\varphi$ divides $G_{\lambda_0}$. Since the polynomial ring $\RR[t]$ is an integral domain, the bivariate polynomial $G_{\lambda_0}$ is non-constant precisely when the resultant $R_{\lambda_0}$ is identically zero, which implies $\Lambda(\lambda_0) = 0$.
\end{proof}

Proposition \ref{howto} provides tentative values of $\lambda_0$, which must be tested afterwards using Theorem \ref{funda-sym}.

\begin{example}\label{ex-crunodes}
Let $\CCC_1$ be the crunode parametrized by 
\[ \bfx_1(t) = \left(\frac{t}{t^4 + 1}, \frac{t^2}{t^4 + 1}, \frac{t^3}{t^4 + 1} \right). \]
Based on the reparametrization $\phi(t) = t+1$ and similarity
\begin{equation}\label{example2}
f(\tx)=\lambda\mQ \tx+\bfb,\qquad
\lambda = 2,\qquad
\mQ =
\begin{bmatrix}
\phantom{+}3/5 & 4/5 & 0\\
-4/5 & 3/5 & 0\\
 \phantom{+}0 & 0 & 1
\end{bmatrix},\qquad 
\bfb =
\begin{bmatrix} 0\\ 0\\ 2 \end{bmatrix},
\end{equation}
we define another crunode $\CCC_2 := f(\CCC_1)$ parametrized by $\bfx_2 = f\circ \bfx_1 \circ \phi$, i.e.,
\[
\bfx_2(t) = \left( 
\frac25 \frac{(t+1)(4t+7)}{(t+1)^4+1},
\frac25 \frac{(t+1)(3t-1)}{(t+1)^4+1},
        \frac{2(t+1)^3}{(t+1)^4+1} + 2\right).\]
The curves $\CCC_1$ and $\CCC_2$ are shown in Figure \ref{fig:crunodes}, together with the invariant sets of their symmetries, i.e., two planes of reflection and an axis of rotation.

One verifies that $\kappa^2_{\bfx_1}, \kappa^2_{\bfx_2}$ are rational functions where the numerators and denominators have degree $36$. Furthermore, the numerator and denominator of $\tau_{\bfx_1}, \tau_{\bfx_2}$ have degree 8. Therefore $K_\lambda(t,s)$ has bidegree $(36,36)$ and $T_\lambda(t,s)$ has bidegree $(8,8)$. After computing $R_\lambda = \Res_s(K_{\lambda},T_{\lambda})$, we get $\Lambda(\lambda)=\Lambda^{\star}(\lambda)=\lambda^2-4$, so $\SSS_0 = \{-2,2\}$ contains the tentative values for $\lambda$. 
For $\lambda_0=2$ we obtain $G_2(t,s) = (s+t+1)(s-t+1)$ and two corresponding M\"obius transformations satisfying \eqref{eq:arc}, namely $\varphi_1(t) = - t - 1$ and $\varphi_2(t) = t - 1$. For $\lambda_0 = -2$ we obtain $G_{-2}(t,s)=(st+t+1)(st+t-1)$ and two corresponding M\"obius transformations satisfying \eqref{eq:arc}, namely $\varphi_3(t)=-(t+1)/t$ and $\varphi_4(t)=(-t+1)/t$.
Therefore ${\mathcal C}_1$ and ${\mathcal C}_2$ are similar, and there are four different similarities mapping one to the other. 
\end{example} 

\begin{figure}
\begin{center}
\subfloat[]{\includegraphics[scale=0.18, clip = true, trim = 75 215 80 160]{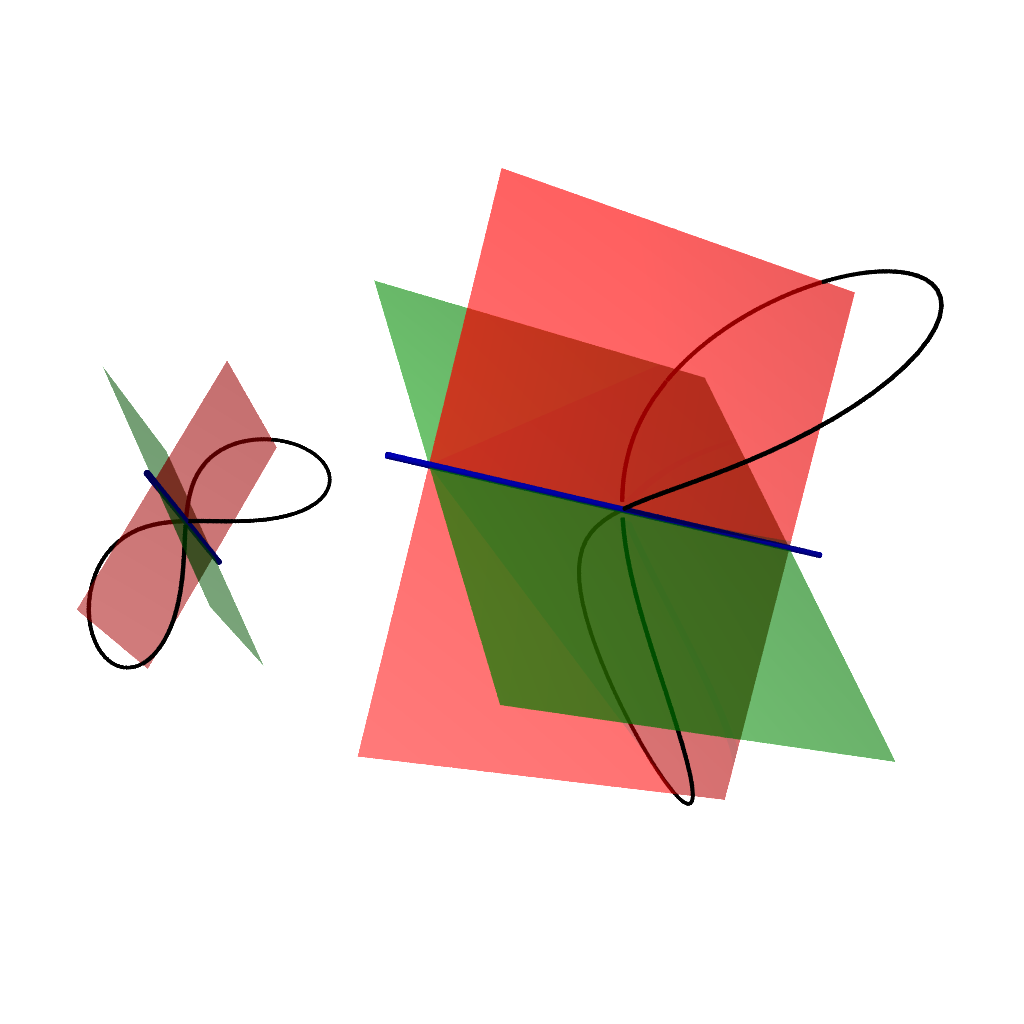}\label{fig:crunodes}}\qquad
\subfloat[]{\includegraphics[scale=0.18, clip = true, trim = 30 270 70 290]{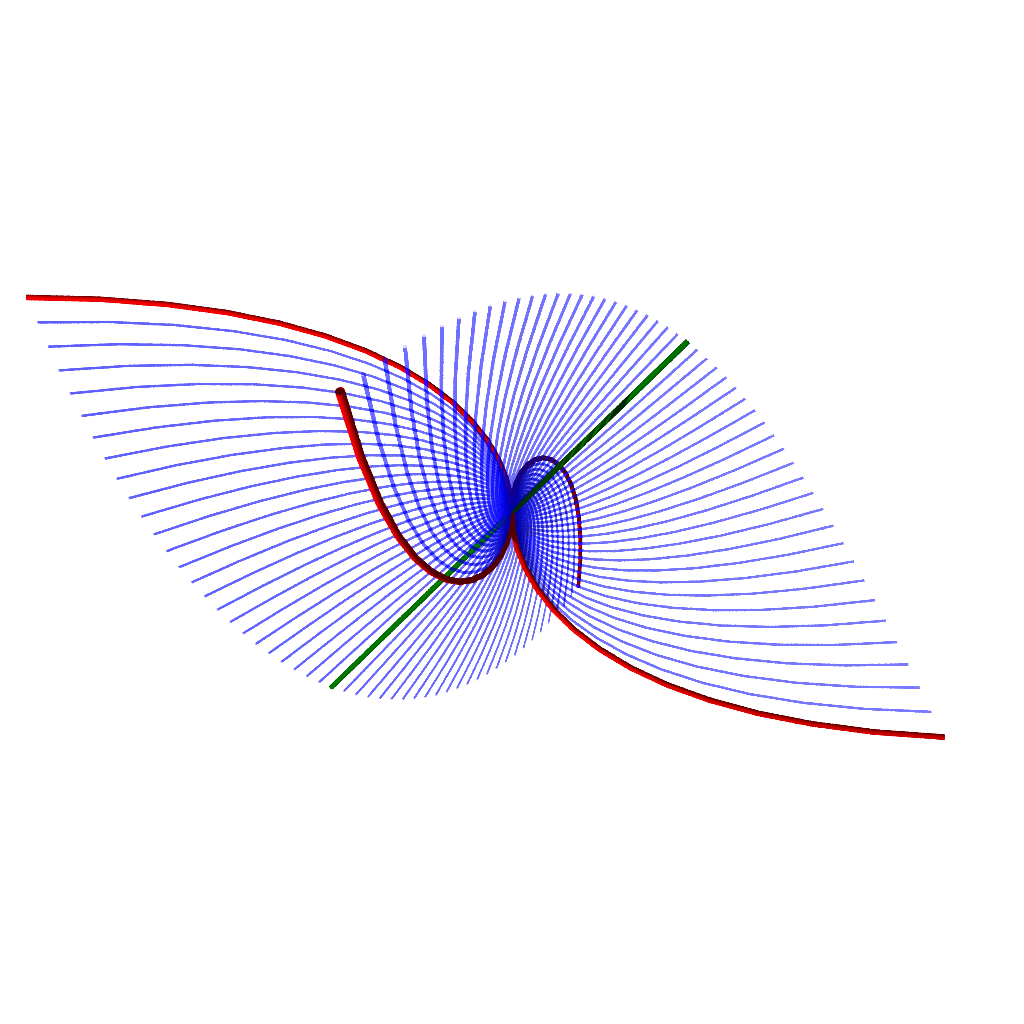}\label{fig:helicalcurves}}
\end{center}
\caption{Left: The similar crunode curves from Example \ref{ex-crunodes}. Right: The family of helical curves $\CCC_\alpha$, with $-1\leq \alpha\leq 1$, from Example \ref{ex:HelicalCurves}, with the curves $\CCC_{-1}, \CCC_0, \CCC_1$ emphasized.}\label{fig:spacecurves}
\end{figure}

\subsection{Finding the ratio for helical curves}\label{sec:detecthelical}
\noindent Assume that $\CCC_1, \CCC_2$ are similar helical curves. Then their proportionality ratios are equal up to a sign by Proposition \ref{prop:helicalequalmu}, and $\gcd(K_{\lambda},T_{\lambda})=T_{\lambda}$ for any $\lambda$ by the proof of Proposition \ref{prop:why}. Therefore $R_\lambda \equiv 0$, and we cannot use the method in Section \ref{method-non-helical} to find the potential ratios $\lambda$. However, since $G_{\lambda} = T_{\lambda}$ is known, we can directly apply Theorem \ref{funda-sym} to find the $\lambda$ values for which $G_{\lambda}$ has a M\"obius-like factor. In order to do this, we adapt the method in \cite[\S 3.2]{AHM15}, where the problem of directly computing the M\"obius-like factors of a bivariate polynomial is solved. The idea is that, if $\lambda_0$ is the ratio we are seeking, the M\"obius transformation $\varphi$ corresponding to the similarity is implicitly defined by $G_{\lambda_0}$, so that it can be reconstructed from its local data. 

\begin{lemma}
Let either $c = 0$ or $t_0\neq -d/c$, and consider the Taylor expansion
\begin{equation}\label{eq:MoebiusTaylor}
\varphi(t) = \frac{at + b}{ct + d} = s_0 + s_0'(t - t_0) + \frac12 s_0'' (t - t_0)^2  + \cdots
\end{equation}
Then, as homogeneous coordinates,
\begin{equation}\label{eq:abcd-homogeneous}
[a:b:c:d] = [2 (s_0')^2 - s_0 s_0'' : 2s_0s_0' + t_0s_0s_0'' - 2t_0(s_0')^2 : -s_0'' : 2s_0' + t_0 s_0''].
\end{equation}
\end{lemma}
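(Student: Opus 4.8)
The plan is to prove the identity by a direct computation: read off the local data $s_0, s_0', s_0''$ from the closed form of $\varphi$, and then invert those three relations to recover the coefficients up to scale.

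First I would record the derivatives of $\varphi(t) = (at+b)/(ct+d)$. Writing $\Delta := ad - bc \neq 0$ and applying the quotient rule,
\[ \varphi'(t) = \frac{\Delta}{(ct+d)^2}, \qquad \varphi''(t) = \frac{-2c\,\Delta}{(ct+d)^3}. \]
The hypothesis (either $c=0$, or $t_0 \neq -d/c$) is exactly what guarantees $w := ct_0 + d \neq 0$, so that $\varphi$ is analytic at $t_0$ and the Taylor coefficients in \eqref{eq:MoebiusTaylor} are given by
\[ s_0 = \frac{at_0 + b}{w}, \qquad s_0' = \frac{\Delta}{w^2}, \qquad s_0'' = \frac{-2c\,\Delta}{w^3}. \]
In particular $s_0' \neq 0$, since $\Delta \neq 0$ and $w \neq 0$.

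Next I would eliminate the auxiliary quantity $w$, which plays the role of the projective scale factor because it scales together with $[a:b:c:d]$. Forming $s_0''/s_0' = -2c/w$ expresses $c$ through $w$ and the local data; the relation $d = w - ct_0$ then gives $d$; and the two defining equations $at_0 + b = s_0 w$ and $ad - bc = s_0' w^2$ form a small linear system for $a,b$. Using the simplification $d + t_0 c = w$, this system solves to $a = s_0' w + s_0 c$ and $b = s_0 w - t_0 a$. Substituting the expression for $c$ throughout and collecting terms, each of $a,b,c,d$ emerges as the single common factor $w/(2s_0')$ times the corresponding entry of \eqref{eq:abcd-homogeneous}. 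Since $w \neq 0$ and $s_0' \neq 0$, this factor is nonzero and may be divided out, yielding the asserted equality of homogeneous coordinates.

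The computation is elementary, so I do not expect a serious obstacle; the only point that requires care is the projective bookkeeping. One must check that the \emph{same} scalar $w/(2s_0')$ is the proportionality factor for all four coordinates simultaneously — this is precisely what makes the right-hand side of \eqref{eq:abcd-homogeneous} a legitimate representative rather than four independently rescaled quantities — and that the recovered vector is nonzero. The latter is automatic: the original $a,b,c,d$ are not all zero and equal a nonzero multiple of the entries, so the entries cannot all vanish. The case split in the hypothesis is used solely to secure $w\neq 0$: when $c=0$ every $t_0$ is a regular point, and when $c\neq 0$ the condition $t_0\neq -d/c$ is literally the statement $w\neq 0$.
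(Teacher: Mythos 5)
Your proposal is correct and takes essentially the same route as the paper: differentiate the closed form of $\varphi$ to obtain $s_0 = (at_0+b)/(ct_0+d)$, $s_0' = \Delta/(ct_0+d)^2$, $s_0'' = -2c\Delta/(ct_0+d)^3$, and then invert these relations to recover $[a:b:c:d]$. The only difference is that you carry out explicitly, with the common scalar $w/(2s_0')$, the inversion that the paper leaves as a ``straightforward calculation,'' and your algebra checks out.
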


\begin{proof}
Differentiating \eqref{eq:MoebiusTaylor} and evaluating at $t = t_0$ yields
\[
s_0   = \varphi  (t_0) = \frac{at_0 + b}{ct_0 + d},\quad
s_0'  = \varphi' (t_0) = \frac{\Delta}{(ct_0 + d)^2},\quad
s_0'' = \varphi''(t_0) = \frac{-2c\Delta}{(c t_0 + d)^3}.
\]
The statement follows from a straightforward calculation.
\end{proof}

Let $t_0\in\QQ$ be such that the evaluation at $t=t_0$ of the leading coefficient $\mathrm{lc}_s(G_{\lambda}(t,s))$ of the polynomial $G_{\lambda}(t,s)$ with respect to $s$ is not identically zero. Let $L(\lambda)$ be $\mathrm{lc}_s(G_{\lambda}(t,s))$ evaluated at $t_0$. In order to detect M\"obius-like factors of $G_\lambda$ using the implicit function theorem, we need to exclude any $\lambda$ from $\SSS_1 \cup \SSS_2$, with $\SSS_1 := \{0\neq \lambda\in \RR\,:\,L(\lambda) = 0\}$ and
\[\SSS_2 := \left\{0\neq \lambda \in \RR\,:\, G_{\lambda}(t_0,s)=0,\mbox{ }\frac{\partial G_\lambda}{\partial s} (t_0, s) = 0 \text{ for some } s\in \CC\right\}.\]
The elements of $\SSS_2$ can be found by eliminating the variable $s$ from the bivariate polynomial system $G_{\lambda}(t_0,s) = \frac{\partial G_\lambda}{\partial s} (t_0, s) = 0$ in $\lambda, s$, for instance using the Sylvester resultant.	

Suppose $\lambda \notin \SSS_1 \cup \SSS_2$. With the dependency on the variable $\lambda$ understood, write $G = G_\lambda$ and $G_t, G_s, G_{tt}, G_{ts}, G_{ss}$ for the first and second order partial derivatives of $G$. Suppose $G$ has a M\"obius-like factor $F$, and let $s_0$ be a variable required to satisfy $F(t_0, s_0) = 0$. Since $G(t_0, s_0) = 0$ and $G_s(t_0, s_0) \neq 0$ one has $\frac{\partial F}{\partial s}(t_0, s_0)\neq 0$, and the equation $F(t,s) = 0$ implicitly defines a function $s = \varphi(t)$ in a neighborhood of $t_0$ with $s_0 = \varphi(t_0)$ as in \eqref{eq:MoebiusTaylor}.

In order to determine $F$, we find expressions for $s_0', s_0''$ in terms of $s_0,\lambda$, using that $\varphi(t)$ is also implicitly defined by $G(t,s) = 0$, because $F$ divides $G$ and $G_s (t_0, s_0)\neq 0$. Differentiating once and twice the identity $G\big(t,\varphi(t)\big)=0$ with respect to $t$, solving for $\varphi', \varphi''$, and evaluating at $t_0$ expresses 
\begin{align}
s'_0  & = \varphi'(t_0) = - \frac{G_t}{G_s}(t_0, s_0),\\
s''_0  & = \varphi''(t_0) = - \frac{G_s^2 G_{tt} - 2G_tG_sG_{ts} + G_t^2 G_{ss} }{G_s^3} (t_0,s_0)
\end{align}
in terms of the unknown $s_0$. Substituting these expressions into \eqref{eq:abcd-homogeneous} and multiplying by $-G_s^3(t_0,s_0)$ yields polynomial expressions for the coefficients of $\varphi$ in terms of $s_0$,
\begin{equation}\label{eq:abcdG}
\begin{aligned}
a(s_0, \lambda) & = - \big(G_s^2G_{tt} - 2 G_t G_s G_{ts} + G_t^2 G_{ss}\big) s_0 - 2G_t^2 G_s ,\\
b(s_0, \lambda) & = + \big(G_s^2G_{tt} - 2 G_t G_s G_{ts} + G_t^2 G_{ss}\big) t_0 s_0 + 2s_0 G_t G_s^2 + 2t_0 G_t^2 G_s,\!\!\!\!\\
c(s_0, \lambda) & = - \big(G_s^2G_{tt} - 2 G_t G_s G_{ts} + G_t^2 G_{ss}\big),\\
d(s_0, \lambda) & = + \big(G_s^2G_{tt} - 2 G_t G_s G_{ts} + G_t^2 G_{ss}\big) t_0 + 2G_tG_s^2,
\end{aligned}
\end{equation}
where these expressions are understood to be evaluated at $(t_0, s_0)$.

The polynomial $F$ divides $G$ if and only if the resultant $\Res_s(F,G)$ is identically zero, or equivalently precisely when
\begin{equation} \label{eq:G}
 0 = G\big(t, \varphi(t)\big) = G\left(t, \frac{a(s_0,\lambda)t + b(s_0,\lambda)}{c(s_0,\lambda)t + d(s_0,\lambda)}\right) 
\end{equation}
holds identically. Clearing denominators yields a polynomial $P(t)$, whose coefficients are polynomials $P_i(s_0,\lambda)$. Then $F$ divides $G$ if and only if there exist $s_0\in \RR$ and $0\neq \lambda\in \RR$ for which the $P_i$ are simultaneously zero, i.e., when there is such a point $(s_0, \lambda)$ on the real variety generated by the ideal $\langle P_i \rangle_i \subset \RR[s,\lambda]$. Using Gr\"obner bases, one eliminates the variable $s$ from this ideal, resulting in a principal ideal $\langle\Lambda \rangle \subset \RR[\lambda]$. Let $\SSS_0 := \{0\neq \lambda\in \RR\,:\,\Lambda(\lambda) = 0\}$. We have shown:

\begin{theorem}
Suppose $f(\CCC_1) = \CCC_2$ for a similarity $f$ with ratio $\lambda$.
Let $t_0$ be such that $\mathrm{lc}_s(G_{\lambda}(t,s))$ does not vanish identically at $t=t_0$. Then $\lambda\in \SSS_0\cup\SSS_1\cup\SSS_2$. 
\end{theorem}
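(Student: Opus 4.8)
The plan is to split the argument into two cases according to whether $\lambda$ already lies in $\SSS_1\cup\SSS_2$, and to show that otherwise the implicit-function-theorem reconstruction developed above forces $\lambda\in\SSS_0$.

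First I would produce the Möbius-like factor. Since $f(\CCC_1)=\CCC_2$ with ratio $\lambda$, Lemma \ref{lem:curvaturetorsionsimilar} supplies a Möbius transformation $\varphi$ with $\tau_{\bfx_1}=\lambda\cdot\tau_{\bfx_2}\circ\varphi$, so that $T_\lambda\big(t,\varphi(t)\big)=0$ holds identically. Exactly as in the proof of Theorem \ref{funda-sym}, the associated Möbius-like polynomial $F$ of \eqref{eq:F} is irreducible and divides $G_\lambda=T_\lambda$. If $\lambda\in\SSS_1\cup\SSS_2$ there is nothing further to prove, so from now on I assume $\lambda\notin\SSS_1\cup\SSS_2$.

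Next I would check that the hypotheses underlying \eqref{eq:abcdG} are met at $t_0$. Writing $G_\lambda=F\cdot H$ and taking leading coefficients with respect to $s$ gives $\mathrm{lc}_s(G_\lambda)=(ct+d)\cdot\mathrm{lc}_s(H)$, so evaluating at $t_0$ yields $L(\lambda)=(ct_0+d)\cdot\mathrm{lc}_s(H)(t_0)$. Since $\lambda\notin\SSS_1$ means $L(\lambda)\neq0$, I obtain $ct_0+d\neq0$; hence $\varphi$ is defined at $t_0$ and $s_0:=\varphi(t_0)$ is a genuine finite root of $G_\lambda(t_0,\cdot)$. Because $\lambda\notin\SSS_2$, this root satisfies $G_s(t_0,s_0)\neq0$, so the implicit function theorem applies and $G_\lambda(t,s)=0$ implicitly defines $\varphi$ near $t_0$ with the Taylor data $s_0',s_0''$ recorded above. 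Substituting these into \eqref{eq:abcd-homogeneous} produces the polynomial coefficient expressions \eqref{eq:abcdG}, and clearing denominators in \eqref{eq:G} yields the polynomials $P_i(s_0,\lambda)$.

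Finally I would close the logical loop. Because the factor $F$ genuinely exists, the pair $(s_0,\lambda)$ is a common real zero of all the $P_i$, hence of every element of the ideal $\langle P_i\rangle\subset\RR[s,\lambda]$. In particular, since the generator $\Lambda$ of the elimination ideal $\langle P_i\rangle\cap\RR[\lambda]$ lies in $\langle P_i\rangle$, it vanishes at $(s_0,\lambda)$; as $\Lambda$ depends only on $\lambda$, this gives $\Lambda(\lambda)=0$, i.e.\ $\lambda\in\SSS_0$. Combining the two cases yields $\lambda\in\SSS_0\cup\SSS_1\cup\SSS_2$. The delicate step is the third paragraph: one must translate the two exclusions into precisely the nonvanishing conditions $ct_0+d\neq0$ and $G_s(t_0,s_0)\neq0$ that legitimize the implicit-function reconstruction, confirming through the factorization $G_\lambda=F\cdot H$ that no root is lost to a degree drop in $s$ at $t_0$ and that $s_0=\varphi(t_0)$ is the correct local branch from which the coefficients \eqref{eq:abcdG} are recovered.
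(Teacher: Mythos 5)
Your proof is correct and takes essentially the same route as the paper, whose own justification is the derivation preceding the theorem: after excluding $\lambda\in\SSS_1\cup\SSS_2$, reconstruct $\varphi$ from $G_\lambda$ via the implicit function theorem at $(t_0,s_0)$, obtain the polynomials $P_i(s_0,\lambda)$, and conclude $\Lambda(\lambda)=0$, hence $\lambda\in\SSS_0$, from the elimination ideal. Your explicit check that $\lambda\notin\SSS_1$ forces $ct_0+d\neq 0$ (via $\mathrm{lc}_s(G_\lambda)=(ct+d)\cdot\mathrm{lc}_s(H)$, so that $s_0=\varphi(t_0)$ is a genuine finite root and no root is lost to a degree drop in $s$) is a detail the paper leaves implicit, and is a welcome clarification rather than a deviation.
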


Therefore, $f(\CCC_1) = \CCC_2$ for a similarity $f$ with ratio $\lambda_0$ if and only if
\begin{enumerate}
\item[(i)]  $\lambda_0\in \SSS_1\cup\SSS_2$ and $G_{\lambda_0}$ has a M\"obius-like factor, or
\item[(ii)] $\lambda_0\in \SSS_0$ and the polynomials $P_i$, after substituting $\lambda_0$, have a common real root $s_0$,
\end{enumerate}
for which, in either case, the corresponding M\"obius transformation satisfies \eqref{eq:arc}. 

\begin{example}\label{ex:HelicalCurves}
Consider the family of curves $\{\CCC_\alpha\}_\alpha$ defined by the parametrizations
\[ \bfx_\alpha(t) = \left(-\frac{1}{3}t^3+\alpha^2t,\frac{2}{3}t^3+\alpha t^2,\frac{2}{3}t^3-\alpha t^2\right),\qquad \alpha\in \RR. \]
These curves are shown in Figure \ref{fig:helicalcurves} for parameters $-1\leq \alpha \leq 1$, with the curves $\CCC_{-1}, \CCC_0, \CCC_1$ emphasized. Except for the line $\CCC_0$, each curve $\CCC_\alpha$ is a cubic helical curve with proportionality constant $\mu_\alpha$ satisfying $|\mu_\alpha| = \sqrt{2}$, since
\[ \kappa_{\bfx_\alpha} = \frac{2|\alpha|\sqrt{2}}{(\alpha^2+3t^2)^2},\qquad \tau_{\bfx_\alpha}=\frac{2\alpha}{(\alpha^2+3t^2)^2}. \]
In order to determine if the curves $\CCC_1, \CCC_{-1}$ are similar, we compute
\[G_{\lambda}(t,s)= 9\lambda t^4 + 9s^4 + 6\lambda t^2 + 6s^2 + \lambda + 1.\]
Letting $t_0 = 1$, one has $G_{\lambda}(t_0,s) = 9s^4 + 6s^2 + 16\lambda + 1$ with constant leading coefficient $L(\lambda) = 9$, implying $\SSS_1 = \emptyset$. Moreover, $\frac{\partial G_{\lambda}}{\partial s}(t_0,s) = 36s^3 + 12s$, so $\SSS_2=\{-1/16\}$. Since
\[ s'_0  = \frac{-4\lambda}{s_0(3s_0^2+1)},\qquad
   s''_0 = \frac{-2\lambda (45s_0^6+30s_0^4+72\lambda s_0^2+5s_0^2 + 8\lambda)}{s_0^3(3s_0^2+1)^3}, \]
we have, after scaling by a common factor,
\begin{equation}\label{eq:exhel}
\begin{aligned}
a(s_0, \lambda) & = -s_0(45s_0^6 + 30s_0^4 + 120\lambda s_0^2 + 5s_0^2 + 24\lambda),\\
b(s_0, \lambda) & = 3s_0(27s_0^6 + 18s_0^4 +  40\lambda s_0^2 + 3s_0^2 +  8\lambda),\\
c(s_0, \lambda) & = -(45s_0^6 + 30s_0^4 + 72\lambda s_0^2 + 5 s_0^2 + 8\lambda),\\
d(s_0, \lambda) & = 81s_0^6 + 54s_0^4 + 72\lambda s_0^2 + 9 s_0^2 + 8\lambda.
\end{aligned}
\end{equation}
Substituting \eqref{eq:exhel} into \eqref{eq:G} and clearing denominators, we get a polynomial $P(t)$ whose coefficients are polynomials $P_i(s_0,\lambda)$. Eliminating the variable $s_0$ from the ideal $\langle P_i \rangle_i$, we obtain the generator $\Lambda(\lambda) = \lambda^5(\lambda + 1)$ and $\SSS_0 = \{-1\}$. Then 
\[G_{-1}(t,s) = 3(s - t)(s + t)(3s^2 + 3t^2 + 2), \]
and the corresponding M\"obius transformations $\varphi_1(t) = t$ and $\varphi_2(t) = -t$ satisfy \eqref{eq:arc}. Since $\lambda=-1$ succeeds, one does not need to try $\lambda=-1/16 \in \SSS_2$ by Proposition \ref{lambda-unique}. We conclude that $\CCC_1$ and $\CCC_2$ are similar under two similarities. 
\end{example}

\subsection{Finding the similarities} \label{sec:finding}
\noindent Suppose that using Algorithm {\tt Similar3D} we have determined that $f(\CCC_1) = \CCC_2$ for a similarity $f(\tx)=\lambda_0\mQ\tx + \bfb$. Then we have computed the associated M\"obius transformation $\varphi$ and the ratio $\lambda_0$, and we would like to find $\mQ$ and $\bfb$. For this purpose, we adapt to our problem the discussion in \cite[\S 4]{AHM15}. By Theorem~\ref{th-fund}, 
\begin{equation} \label{fund-equality}
\lambda_0 \cdot \mQ \bfx_1(t)+\bfb=\bfx_2\big(\varphi(t)\big).
\end{equation}
Once $\mQ$ is determined, one finds $\bfb$ by evaluating \eqref{fund-equality} at $t = 0$.

Without loss of generality, we assume that $\bfx_1(t)$, and therefore any of its derivatives, is well defined at $t=0$ and that $\bfx_1'(0),\bfx_1''(0)$ are well defined, nonzero, and not parallel. This is equivalent to requiring that the curvature $\kappa_{\bfx_1}(0)$ is well defined and nonzero, which can always be achieved by a reparametrization of type $t \longmapsto t+\alpha$. 

To determine $\mQ$, we consider separately the cases when the coefficient $d$ of the M\"obius transformation $\varphi$ satisfies $d\neq 0$ or $d=0$. If $d=0$, then $0\neq \Delta = -bc$ implies $c\neq 0$, and Equation \eqref{fund-equality} becomes
\[ \lambda_0 \cdot \mQ\bfx_1(t) + \bfb = \bfx_2\big(\varphi(t)\big) = \bfx_2\big(\ta / t + \tb \big),\qquad
\ta := \frac{b}{c},\qquad \tb := \frac{a}{c}.\]
Writing $\tbfx_2(t) := \bfx_2(1/t)$, we obtain
\begin{equation} \label{spec}
 \lambda_0 \cdot \mQ\bfx_1(t) + \bfb =  \tbfx_2\big(\ta t + \tb\big).
\end{equation}
Evaluating \eqref{spec} at $t = 0$ yields
\begin{equation}\label{eq:evaldzero}
\lambda_0\cdot \mQ\bfx_1(0) + \bfb = \tbfx_2(\tb),
\end{equation}
while differentiating \eqref{spec} once and twice and evaluating at $t=0$ yields
\begin{equation}\label{eq:diff12dzero}
\lambda_0\cdot \mQ\bfx_1'(0) =  \tbfx_2'(\tb)\cdot \ta,\qquad \lambda_0\cdot \mQ\bfx_1''(0) =  \tbfx_2''(\tb)\cdot \ta^2.
\end{equation}
Taking the cross product in \eqref{eq:diff12dzero} and using \eqref{eq:orthogonalcrossproduct} with $\det(\mQ) = 1$ and $\mM = \mQ$ orthogonal,
\begin{equation}\label{eq:diff1xdiff2dzero}
\lambda_0^2 \cdot \mQ\big( \bfx_1'(0)\times \bfx_1''(0) \big) = \tbfx_2'(\tb) \times \tbfx_2''(\tb) \cdot \ta^3.
\end{equation}
Combining \eqref{eq:diff12dzero} and \eqref{eq:diff1xdiff2dzero}, with
\begin{equation}\label{eq:B}
\mB :=[\lambda_0\cdot \bfx_1'(0), \lambda_0\cdot \bfx_1''(0), \lambda_0^2\cdot \bfx_1'(0) \times \bfx_1''(0)],
\end{equation}
yields
\begin{equation}\label{eq:C}
\mQ\mB = \mC := \big[\tbfx_2'(\tb)\cdot \ta, \tbfx_2''(\tb)\cdot \ta^2, \tbfx_2'(\tb) \times \tbfx_2''(\tb) \cdot \ta^3\big],
\end{equation}
and $\mQ = \mC\mB^{-1}$.

Now let us address the case $d\neq 0$. Differentiating \eqref{fund-equality} once and twice, yields
\begin{align}
\lambda_0 \cdot \mQ\bfx_1'(t)  & = \bfx_2'\big(\varphi(t)\big)\cdot\varphi'(t) = \bfx_2'\left(\frac{a t + b}{c t + d}\right)\frac{\Delta}{(c t + d)^2}, \label{eq:first}\\
\lambda_0 \cdot \mQ\bfx_1''(t) & = \bfx_2''\big(\varphi(t)\big)\big(\varphi'(t)\big)^2 + \bfx_2'\big(\varphi(t)\big)\varphi''(t) \label{eq:second}\\
           & =\displaystyle{\bfx_2''\left(\frac{a t + b }{c t + d }\right) \frac{\Delta^2}{(c t + d )^4} - 2\bfx_2'\left(\frac{a t + b}{c t + d}\right) \frac{c\cdot \Delta}{(c t + d)^3}}, \notag
\end{align}
where $\Delta=ad-bc$. Evaluating \eqref{eq:first} and \eqref{eq:second} at $t = 0$ yields
\begin{align}
\lambda_0 \cdot \mQ \bfx_1'(0)  & = \bfx_2'(b/d)\cdot \Delta/d^2, \label{eq:diff1}\\
\lambda_0 \cdot \mQ \bfx_1''(0) & = \bfx_2''(b/d)\cdot \Delta^2/d^4 - 2 \bfx_2'(b/d)\cdot c\cdot \Delta/d^3. \label{eq:diff2}
\end{align}
Taking the cross product and using \eqref{eq:orthogonalcrossproduct} with $\mM = \mQ$ orthogonal yields
\begin{equation}\label{eq:diff1xdiff2}
\lambda_0^2 \cdot \mQ\big(\bfx_1'(0) \times \bfx_1''(0) \big)  = (\Delta^3/d^6)\cdot \big(\bfx_2'(b/d) \times \bfx_2''(b/d) \big).
\end{equation}
Since $\lambda_0$ and $\varphi$ are known, the matrix $\mQ$ can again be determined from its action on $\bfx_1'(0), \bfx_1''(0)$, and $\bfx_1'(0) \times \bfx_1''(0)$, which is given by Equations \eqref{eq:diff1}--\eqref{eq:diff1xdiff2}.

\begin{example}
Let us find the similarity between the crunode curves $\CCC_1,\CCC_2$ in Example~\ref{ex-crunodes}, corresponding to $\lambda_0=2$, $\varphi(t)=t-1$. Then $\varphi$ has coefficients $a=1$, $b=-1$, $c=0$, $d=1$, and therefore $\Delta=1$. From \eqref{eq:B}, \eqref{eq:C} one obtains 
\[
\mB = \begin{bmatrix}  2 & 0 & 0\\ 0  & 4 & 0\\ 0 & 0 &  8 \end{bmatrix},\quad
\mC = \begin{bmatrix}  6/5 & 16/5 & 0\\ -8/5  & 12/5 & 0\\ 0 & 0 & 8 \end{bmatrix},\quad
\mQ = \mC\mB^{-1} =
\begin{bmatrix} 3/5 & 4/5 & 0\\ -4/5 & 3/5 & 0\\ 0 & 0 & 1 \end{bmatrix}
\]
Substituting $t=0$ in \eqref{fund-equality} yields $\bfb=\bfx_2(-1)- 2\mQ\bfx_1(0)=[0,0,2]^\rmT$, consistent with Example~\ref{ex-crunodes}.
\end{example}

\subsection{An alternative method} 
\noindent Suppose that $\bfx_1,\bfx_2$ as in \eqref{eq:parametrizations} define curves $\CCC_1,\CCC_2$ related by a similarity $f$ corresponding to a M\"obius transformation $\varphi$ with associated M\"obius-like polynomial~$F$. Setting \eqref{eq:KT} to zero and eliminating $\lambda$, it follows that $F$ must be a factor of the polynomial 
\[ H(t,s) := A_1(t)B_2(s)C_2^2(s)D_1^2(t) - A_2(s)B_1(t)C_1^2(t)D_2^2(s). \]

Let us consider the non-helical case, for which the polynomial $H$ is not identically zero. As an alternative to the method presented in this section, one could first compute the M\"obius-like factors $F$ of $H$, and then find the similarity ratio $\lambda_0$, if it exists, as the (constant) quotient
\[\frac{C_1(t)\cdot D_2\big(\varphi(t)\big)}{C_2\big(\varphi(t)\big)\cdot D_1(t)}.\]
One can then apply Theorem \ref{funda-sym} whenever its condition holds for the pair $(F,\lambda_0)$.

The advantage of this strategy is that it avoids the resultant computation to find $\lambda_0$. However, since this computation is replaced by the factorization of the high-degree polynomial $H$ over the real (irrational) numbers, the computation time is not necessarily better. In fact, as the degree of the curves grows, numerical tests indicate that the performance is worse than the method presented before. The reason seems to be that, in contrast to the degree of $H$, the degree of $G_{\lambda_0}$ tends to stay low, as it is the result of a gcd computation. However, for curves of low degree the method presented in this subsection is a simple and viable approach to detecting similarities.

\section{Experimentation and practical performance}\label{sec:experimentation}
\noindent Algorithm {\tt Similar3D} was implemented in the computer algebra system Maple 18, and was tested on an Intel Core i7 laptop, with 2.9 GHz processor and 8~GB RAM. In this section we present tables with timings corresponding to different groups of examples.

\subsection{Random rational non-helical curves}
\noindent For similar non-helical rational curves with ratio $\lambda \in \QQ$, the bottleneck of Algorithm {\tt Similar3D} is the computation of the resultant $R_\lambda = \Res_s (K_\lambda, T_\lambda)$. In fact, we avoided the direct computation of this resultant. Instead, we computed for various values of $t_0$ the specialized \emph{bivariate} resultants $\Res_s (K_\lambda(s,t_0),$ $T_\lambda(s,t_0))$ and computed their greatest common divisor; this yields a finite list of tentative values of $\lambda$. However, even the computation of these bivariate resultants is time-consuming as the bitsizes of the coefficients or degrees grow. 

Table \ref{tab:sizedegree} lists timings for random rational non-helical parametrizations with various degrees $m$ and coefficients with bitsizes at most $\tau$. The degree of the parametrization corresponds to the highest degree in the numerators and denominators of the components. Similarly, the bitsize of the parametrization corresponds to the largest bitsize of the coefficients of the numerators and denominators of the components. In order to generate these examples, we randomly created curves $\CCC$ with given degree $m$ and bitsize $\tau$, and we ran the algorithm with $\CCC_1\equiv \CCC$ and $\CCC_2\equiv f(\CCC)$, with $f$ the similarity of \eqref{example2}. We observed that in practice almost all the time was consumed computing the tentative values of $\lambda$. 

\begin{table}[t!]
\begin{tabular*}{\columnwidth}{c @{\extracolsep{\stretch{1}}}*{4}{r}@{ }}
\toprule
CPU time &  $\tau = 4$ &  $\tau = 8$ & $\tau = 16$ & $\tau = 32$ \\ \midrule 
$m =  3$ &       0.327 &       0.375 &       0.577 &       0.842 \\ 
$m =  4$ &       0.655 &       1.170 &       1.497 &       3.120 \\ 
$m =  5$ &       1.263 &       1.700 &       3.292 &       7.098 \\ 
$m =  6$ &       1.716 &       3.900 &       6.880 &      15.288 \\ 
$m =  7$ &       4.336 &       6.896 &      14.290 &      27.659 \\ 
$m =  8$ &       8.253 &      12.683 &      22.168 &      35.927 \\ 
$m =  9$ &      11.762 &      10.998 &      21.466 &      57.424 \\ 
$m = 10$ &      12.340 &      23.509 &      46.519 &      90.746 \\ 
\bottomrule
\end{tabular*}
\caption{CPU time (seconds) for random rational parametrizations of various degrees $m$ and coefficients with bitsize bounded by $\tau$.}\label{tab:sizedegree}
\end{table}

\subsection{A family of daisies}
\noindent Table~\ref{tab:spacerose} lists timings for a family of daisies of increasing degree $m = 4j + 4$, parametrically given by
\begin{equation}\label{eq:daisies}
\hspace{-0.1em}\bfx(t) = \left(
u\sum_{i=0}^j (-1)^i {2j\choose 2i} u^{2j - 2i} v^{2i},
v\sum_{i=0}^j (-1)^i {2j\choose 2i} u^{2j - 2i} v^{2i},
\frac{1 - t^{4j + 4}}{1 + t^{4j + 4}}
\right),\hspace{-0.28em}
\end{equation}
where
\[ u = \frac{1-t^2}{1+t^2},\qquad v = \frac{2t}{1+t^2}, \qquad j = 0, 1, \ldots \]
In each case we tested Algorithm {\tt Similar3D} with ${\mathcal C}_1\equiv {\mathcal C}$ and ${\mathcal C}_2\equiv f({\mathcal C})$, with $f$ again the similarity of \eqref{example2}. 

\begin{table}
\begin{tabular*}{\columnwidth}{@{ }l@{\extracolsep{\stretch{1}}}*{1}{ccccc}@{ }}
\toprule
& \includegraphics[scale=0.08,clip=true,trim = 190 100 190 100]{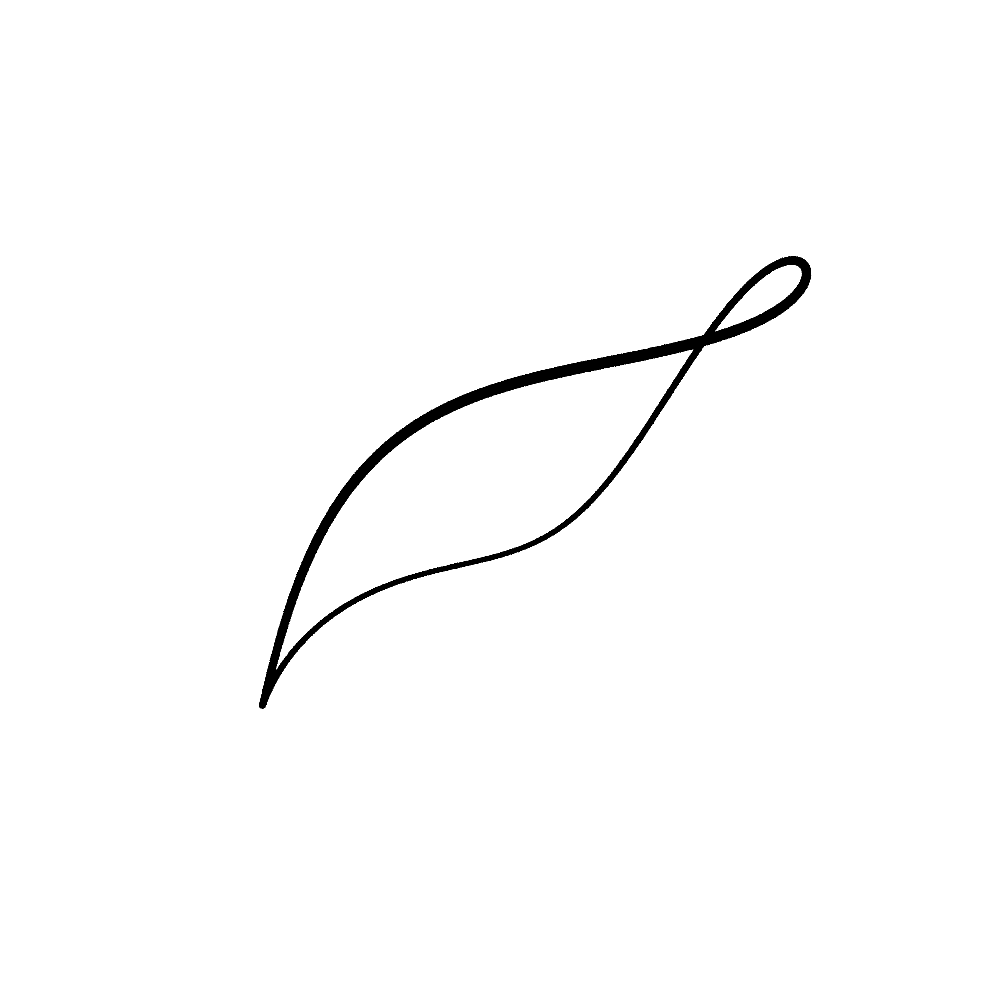} &
  \includegraphics[scale=0.08,clip=true,trim = 190 100 190 100]{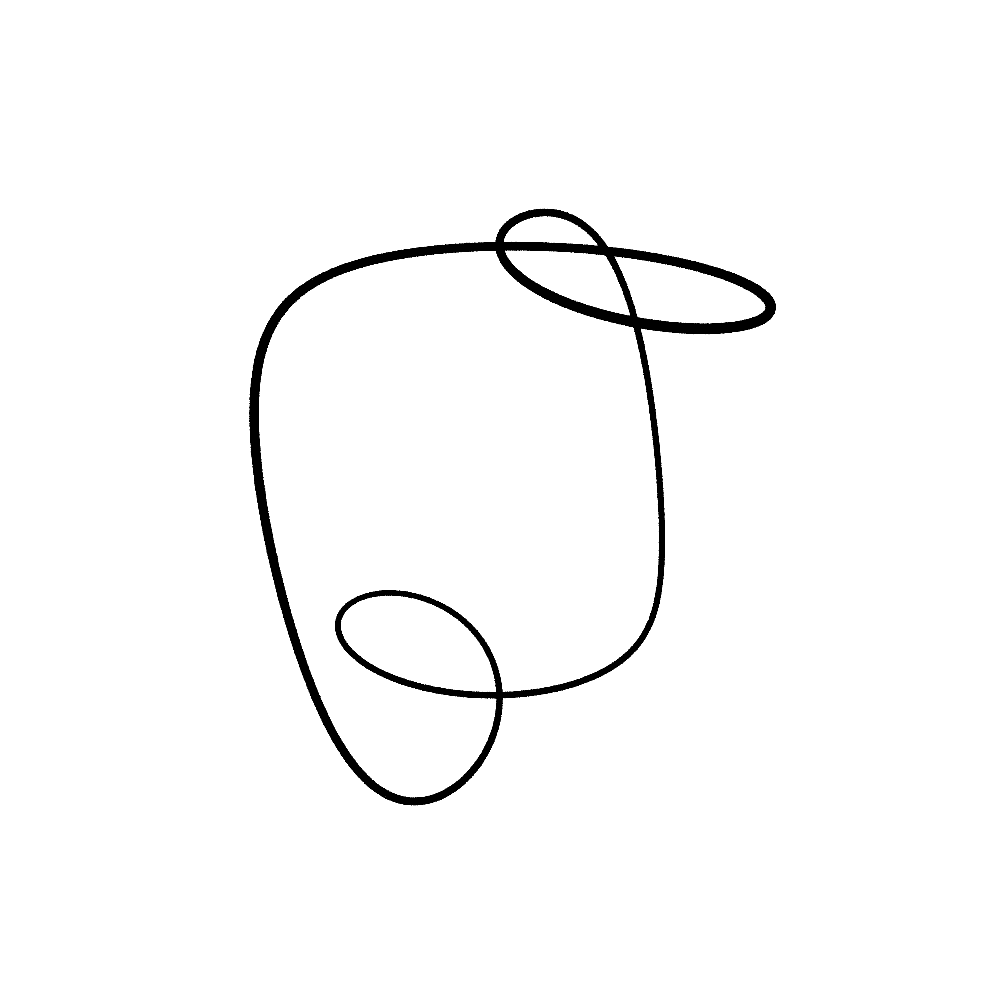} &
  \includegraphics[scale=0.08,clip=true,trim = 190 100 190 100]{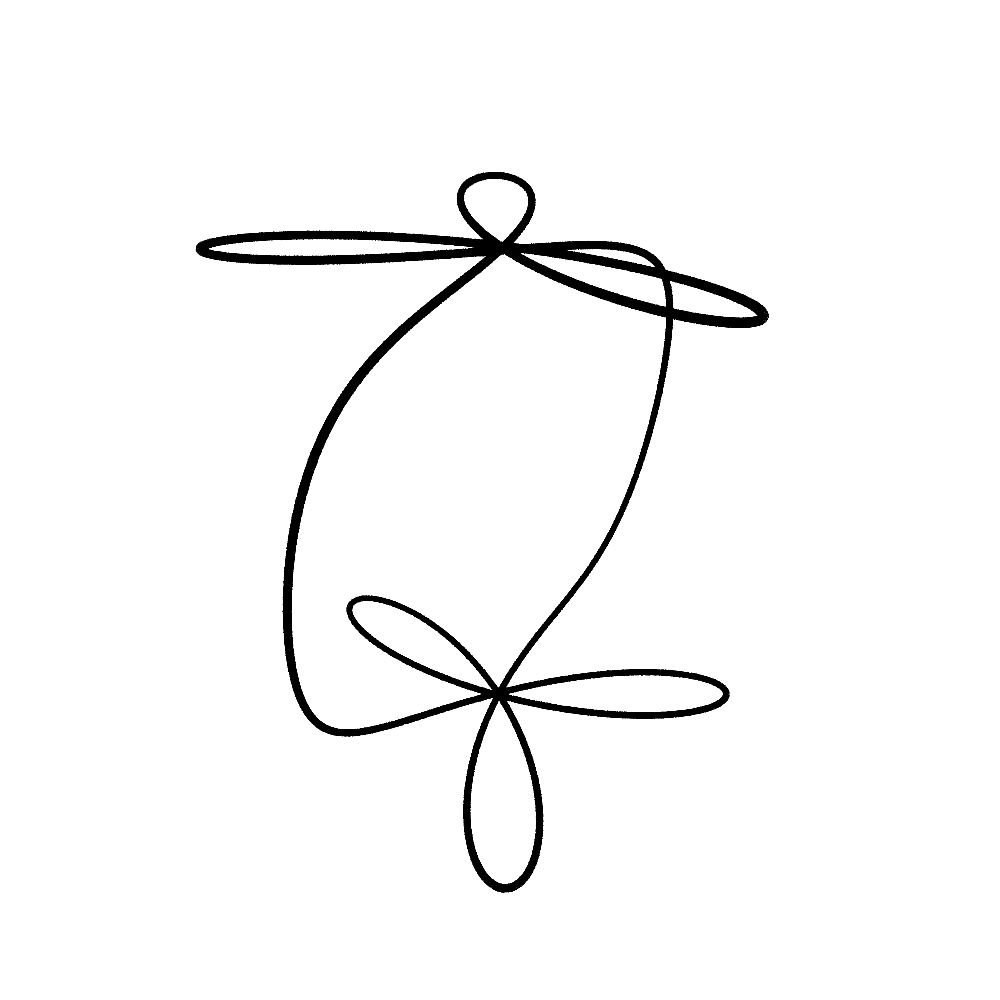} &
  \includegraphics[scale=0.08,clip=true,trim = 190 100 190 100]{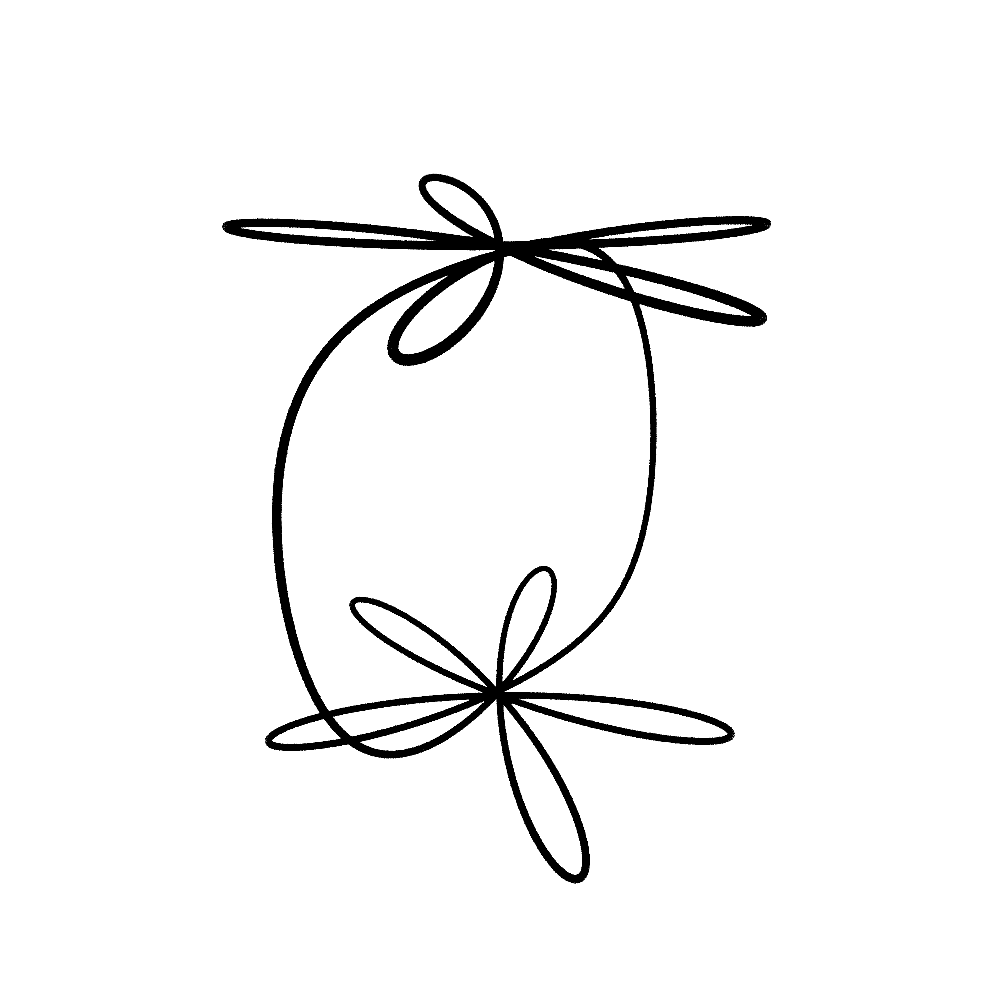} &
  \includegraphics[scale=0.08,clip=true,trim = 190 100 190 100]{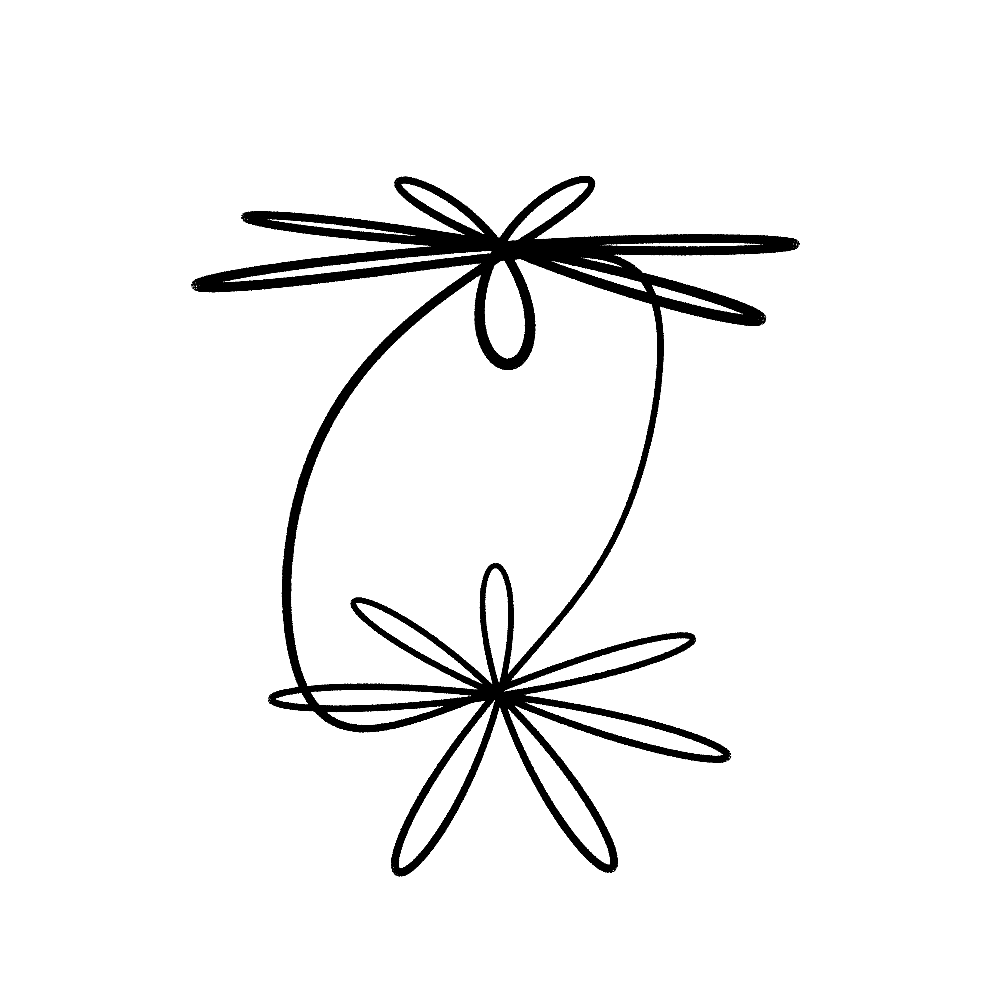} \\
degree   &  $m = 4$ & $m = 8$ & $m = 12$ & $m = 16$ & $m = 20$ \\ 
CPU time &    0.858 &   3.182 &   19.603 &   72.166 &  209.541 \\
\bottomrule
\end{tabular*}
\caption{CPU time (seconds) of Algorithm {\tt Similar3D} applied to daisies of various degrees.}\label{tab:spacerose}
\end{table}

\subsection{Similarities with irrational ratio}
\noindent In the above examples $\lambda$ is rational. In order to test the algorithm in the case where $\lambda \notin \QQ$, we next consider the family of curves 
\begin{equation*}
\bfx_1^n(t) := \left(
\frac{t^{2n+1}}{t^{2n}+1},\frac{t^{2n+3}}{t^{2n}+1},\frac{t^{2n+5}}{t^{2n}+1}
\right),\qquad n = 1, 2, \ldots
\end{equation*}
With the homothety $f({\bf x})=\sqrt{2}\cdot {\bf x}$ and change of parameter $\varphi(t) = t/\sqrt{2}$, for every $n$ the transformed curve
\begin{equation*}
\bfx_2^n(t) := f\circ \bfx_1^n \circ \varphi^{-1} (t) = \left(
\frac{2^{n+1}\cdot t^{2n+1}}{2^n\cdot t^{2n}+1},\frac{2^{n+2}\cdot t^{2n+3}}{2^n\cdot t^{2n}+1},\frac{2^{n+3}\cdot t^{2n+5}}{2^n\cdot t^{2n}+1}
\right)
\end{equation*}
is similar to $\bfx_1^n$ and has rational coefficients as well.

Notice that $\bfx_1^n$ and $\bfx_2^n$ have degree $m = 2n+5$ and coefficients with bitsize ${\mathcal O}(n)$. 
Table \ref{tab:sizedegree2} lists timings for Algorithm {\tt Similar3D} applied to the pairs $(\bfx_1^n, \bfx_2^n)$ for several values of $n$, decomposed as $t = t_\lambda + t_\varphi$ into the timing $t_\lambda$ for the computation of $\lambda$ and $t_\varphi$ for the computation of the tentative M\"obius transformations $\varphi$. Since $\lambda \notin \QQ$, it is necessary to work in an algebraic extension field. Therefore computing the M\"obius transformations $\varphi$, by factoring the gcd $G_{\lambda}$ in \eqref{eq:GR}, requires more time than computing the ratio $\lambda$, which is reflected in Table \ref{tab:sizedegree2}.

Interestingly, the timings $t_\varphi$ are mostly spent not on the M\"obius transformation corresponding to the ratio $\lambda=\sqrt{2}$, but in confirming that other potential, irrational ratios $\lambda$, which are real roots of polynomials of high degree, do not give rise to a similarity. For example, for $n=1$ we need to check a $\lambda$-value which is a root of an irreducible polynomial with rational coefficients of degree 38; for $n=5$, the polynomial has degree 52. Since by Proposition \ref{lambda-unique} the value of $\lambda$ is unique, except perhaps for the sign, after confirming that $\lambda=\sqrt{2}$ gives rise to a similarity between the curves, we can skip the computation for other $\lambda$-values. However, we opted to include all those computations in the table, in order to give an idea of what might happen in other examples with values of the similarity ratio whose minimal polynomial has high degree.

\begin{table}[t!]
\begin{tabular*}{\columnwidth}{@{ }c @{\extracolsep{\stretch{1}}}*{5}{c}@{ }}
\toprule
degree                      & $m=7$ & $m=9$ & $m=11$ & $m=13$ & $m=15$\\ \midrule
$t_\lambda$                 & 0.265 & 0.452 &  0.639 &  1.545 &  2.433\\
$t_\varphi$                 & 1.887 & 1.076 &  2.200 &  2.433 &  7.535\\ \midrule
$t = t_\lambda + t_\varphi$ & 2.152 & 1.528 &  2.839 &  3.978 &  9.968\\
\bottomrule
\end{tabular*}

\caption{CPU time $t$ (seconds) of Algorithm {\tt Similar3D} applied to the pairs $(\bfx_1^n, \bfx_2^n)$ of degree $m = 2n+5$, with $n = 1,2,3,4,5$, decomposed as $t = t_\lambda + t_\varphi$ into the timing $t_\lambda$ for the computation of $\lambda$ and $t_\varphi$ for the computation of the tentative M\"obius transformations $\varphi$.}\label{tab:sizedegree2}
\end{table}

\subsection{Helical curves}
\noindent We also tested Algorithm {\tt Similar3D} for several helical curves. We created these examples (including Example \ref{quint}) by using the results on the generation of cubic and quintic polynomial helices in \cite[\S 23]{Farouki}, as well as the algorithm in \cite{Fatma} for generating general rational helices of any degree. The timings corresponding to these examples are shown in Table~\ref{tab:helical}. These curves are polynomial helices of degree $m\leq 7$.

Table \ref{tab:helical} includes two different pairs of polynomial curves with degree 5: the first one corresponds to the two helical space curves in Example \ref{quint}, which have equal proportionality constants but nevertheless are not similar; the second one corresponds to two similar helical space curves. Although the method in \cite{Fatma} can produce rational, non-polynomial helices, Algorithm {\tt Similar3D} took a long time for even the simplest examples of those. In the case of helical curves, and again with $\lambda \in \QQ$, we observed that the bottleneck of Algorithm {\tt Similar3D} is the use of Gr\"obner bases for eliminating the variable $s$ from the ideal $\langle P_i \rangle_i \subset \RR[s,\lambda]$. 

\begin{table}
\begin{center}
\begin{tabular*}{\columnwidth}{@{ }l@{\extracolsep{\stretch{1}}}*{1}{cccccc}@{ }}
\toprule
degree   & $m = 3$ & $m = 4$ & $m = 5$ & $m = 5$  & $m = 6$ & $m = 7$\\
CPU time &   0.390 &   0.655 &   0.312 &      0.452 &  15.460 &  63.228\\
note     & Ex. \ref{ex:HelicalCurves} & & Ex. \ref{quint}\\
\bottomrule
\end{tabular*}
\caption{CPU time (seconds) of Algorithm {\tt Similar3D} applied to helical polynomial curves of various degrees $m$.}\label{tab:helical}
\end{center}
\end{table}

\section{Similarity of curve segments} \label{segments} 
\noindent In the previous sections we considered \emph{global} rational curves, i.e., curves defined by means of a rational parametrization where the parameter moves over the entire real line (except for some poles). Now let us consider two \emph{curve segments}, i.e., the images of parametrizations of the form
\begin{equation}\label{eq:CurveSegment}
\bfx: I = [a, b] \longrightarrow \RR^3.
\end{equation}
In case these parametrizations are rational, similarity can be detected analogously to the method described in \cite[\S 3.5]{AHM14} for detecting similarity of plane curve segments. We refer the reader to this publication for further detail.

However, in Computer Aided Design the representation \eqref{eq:CurveSegment} is rarely used. Instead, B\'ezier curves, B-spline curves, and NURBS are ubiquitous; see \cite{Patrik, Goldman} for details on the definitions and properties of such curves. These are examples of classes of \emph{curve segments} $\CCC$ with parametrizations taking the form
\begin{equation}\label{eq:CurveSegment2}
\bfx: I \longrightarrow \CCC\subset \RR^3,\qquad \bfx(t) = \sum_{i=0}^\ell \bfc_i B_{i,m}(t),\qquad t\in I = [a, b].
\end{equation}
for certain linearly independent \emph{basis functions} $B_{0,m}, \ldots, B_{\ell,m}$ (with $m$ representing the degree) and \emph{control points} $\bfc_0,\ldots, \bfc_\ell\in \RR^3$. A corresponding \emph{control polygon} $\PPP_\mC$ is represented by the sequence $\mC = (\bfc_0,\ldots, \bfc_\ell)$, where we identify sequences that are reverse to each other, i.e., 
$(\bfc_0,\ldots, \bfc_\ell) \sim (\bfc_\ell,\ldots, \bfc_0)$. It can be visualized as the 1-dimensional piecewise linear subset of $\RR^3$ formed by connecting the consecutive control points by line segments. Notice that for a \emph{fixed} degree $m$ and set $\{B_{0,m}, \ldots, B_{\ell,m}\}$ of linearly independent basis functions, and as a consequence of the linear independence of such functions, the control polygon is unique.

The following properties of \eqref{eq:CurveSegment2} are key to our discussion:
\begin{enumerate}
\item \emph{Properness}. The parametrization $\bfx$ is injective, except perhaps for finitely many parameter values.
\item \emph{Linear independence}. The basis functions $B_{0,m}, \ldots, B_{\ell,m}$ are linearly independent.
\item \emph{Partition of unity}. The basis functions satisfy $B_{0,m}(t) + \cdots + B_{\ell,m}(t) = 1$.
\end{enumerate}

Under the assumption that the basis functions form a partition of unity, the linear nature of the representation \eqref{eq:CurveSegment2} implies
\[
f\circ \bfx (t) = \sum_{i=0}^\ell \mA \bfc_i B_{i,m}(t) + \bfb \sum_{i=0}^\ell B_{i,m}(t) = \sum_{i=0}^\ell f(\bfc_i) B_{i,m}(t)
\] 
for any affine transformation $f(\tx) = \mA \tx + \bfb$. Therefore the following property holds as well.
\begin{enumerate}
\item[4.] \emph{Affine invariance}. For any affine transformation $f$ and parametrized curve segment as in \eqref{eq:CurveSegment2}, one has $f(\PPP_\mC) = \PPP_{f(\mC)}$.
\end{enumerate}

\begin{theorem}\label{thm:similarity-segments}
Let $\bfx_1,\bfx_2$ be two proper parametrizations as in \eqref{eq:CurveSegment2}, where $B_{0,m}, \ldots, B_{\ell,m}$ forms a fixed set of (linearly independent) basis functions, with fixed $m$, forming a partition of unity. Then $\bfx_1,\bfx_2$ are related by a similarity $f$ if and only if the corresponding control polygons $\PPP_{\mC_1},\PPP_{\mC_2}$ are related by $f$.
\end{theorem}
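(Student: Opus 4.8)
The plan is to prove both implications by exploiting the affine invariance property (property 4) together with properness and linear independence of the basis functions. The key observation is that a similarity $f$ is in particular an affine transformation, so affine invariance applies directly to it.

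\medskip

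\noindent\textbf{Proof sketch.}

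\medskip

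First I would prove the easy direction ``$\Longleftarrow$''. Suppose the control polygons satisfy $f(\PPP_{\mC_1}) = \PPP_{\mC_2}$. Since the sequence of control points determines the control polygon and we identify a sequence with its reverse, this means that $f$ maps the ordered tuple $\mC_1 = (\bfc_0^{(1)}, \ldots, \bfc_\ell^{(1)})$ to $\mC_2 = (\bfc_0^{(2)}, \ldots, \bfc_\ell^{(2)})$, possibly after reversing the order. Assume first the orders agree, so that $f(\bfc_i^{(1)}) = \bfc_i^{(2)}$ for each $i$. Then by the affine invariance computation already displayed before the statement,
\[
f \circ \bfx_1(t) = \sum_{i=0}^\ell f(\bfc_i^{(1)}) B_{i,m}(t) = \sum_{i=0}^\ell \bfc_i^{(2)} B_{i,m}(t) = \bfx_2(t),
\]
so the images coincide and $\bfx_1, \bfx_2$ are related by $f$. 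If instead the reversed order matches, the same computation applies after reindexing $i \mapsto \ell - i$, and one relates $\bfx_1$ to a reparametrization of $\bfx_2$ tracing the same point set, which again yields $f(\CCC_1) = \CCC_2$.

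\medskip

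For the harder direction ``$\Longrightarrow$'', suppose $\bfx_1, \bfx_2$ are related by a similarity $f$, so that $f$ maps the curve segment of $\bfx_1$ onto that of $\bfx_2$. By affine invariance, $f(\PPP_{\mC_1}) = \PPP_{f(\mC_1)}$, where $f(\mC_1) = (f(\bfc_0^{(1)}), \ldots, f(\bfc_\ell^{(1)}))$. Thus it suffices to show that $\PPP_{f(\mC_1)} = \PPP_{\mC_2}$, i.e., that the control polygon of $f \circ \bfx_1$ agrees with that of $\bfx_2$ up to the allowed reversal. Now $f \circ \bfx_1$ and $\bfx_2$ parametrize the same curve segment $\CCC_2$ (possibly with reversed orientation). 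The crux is to pass from equality of the \emph{point sets} to equality of the \emph{control polygons}. This is where properness and linear independence must be invoked: because both parametrizations are proper, they are injective up to finitely many parameters, and two proper parametrizations of the same curve segment in the same fixed basis $\{B_{i,m}\}$ of degree $m$ must agree up to the reparametrization induced by orientation reversal of the interval $I = [a,b]$. Under such a reparametrization the basis functions permute in the reversing manner, and linear independence then forces the control points of $f \circ \bfx_1$ to equal those of $\bfx_2$ as sequences (up to reversal). Since by property 2 the control polygon is \emph{uniquely} determined by the parametrization for fixed $m$ and fixed basis, we conclude $\PPP_{f(\mC_1)} = \PPP_{\mC_2}$, and therefore $f(\PPP_{\mC_1}) = \PPP_{\mC_2}$ as required.

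\medskip

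\noindent\emph{Main obstacle.} The delicate step is the ``$\Longrightarrow$'' argument that two proper parametrizations of the same point set, in the same basis, must share the same control polygon. The subtlety is that \emph{a priori} $f \circ \bfx_1$ and $\bfx_2$ need only agree as sets, not pointwise in the parameter; one must rule out that a nontrivial reparametrization of $I$ other than the orientation-reversing one could relate them. Here I would argue that because the basis $\{B_{i,m}\}$ and degree $m$ are fixed and the curve is not a line or a point, the only reparametrizations of the interval $[a,b]$ preserving the span of these basis functions (and hence keeping the parametrization in the same representation class) are the identity and the affine reversal $t \mapsto a + b - t$; both yield the same control polygon after the agreed identification. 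Establishing this rigidity of admissible reparametrizations is the heart of the matter, and it is exactly what the combination of properness, linear independence, and uniqueness of the control polygon is designed to deliver.
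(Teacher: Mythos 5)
Your ``$\Longleftarrow$'' direction is correct and coincides with the paper's one-line argument: affine invariance gives $f(\PPP_{\mC_1}) = \PPP_{f(\mC_1)}$, and uniqueness of the coefficients (linear independence) identifies $f\circ\bfx_1$ with $\bfx_2$. (In the reversed case you tacitly use the symmetry $B_{i,m}(a+b-t) = B_{\ell-i,m}(t)$ of the basis under reversal of $I$; this holds for the Bernstein basis and for symmetric knot vectors, and the paper glosses over it as well.)

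In the ``$\Longrightarrow$'' direction there is a genuine gap, and you say so yourself: the rigidity claim --- that the only reparametrizations of $I$ compatible with the fixed class are the identity and $t\mapsto a+b-t$ --- is declared ``the heart of the matter'' but never proved, so your proposal reduces the theorem to an unestablished lemma. Moreover, the route you sketch toward that lemma does not work as stated. First, the relevant condition is not that a reparametrization ``preserve the span of the basis functions''; it need only carry the single map $\bfx_2$ (equivalently $f\circ\bfx_1$) back into that span, which is a much weaker requirement that span-rigidity considerations do not control. Second, and more seriously, if ``proper'' is read on the interval --- injective on $I$ up to finitely many values, the literal reading of Property 1 for a segment --- then the rigidity claim is simply false, even for curves that are not lines: $\bfx_1(t) = (t, t^3, 0)$ and $\bfx_2(t) = (t^2, t^6, 0)$ on $I = [0,1]$ are both injective there, both lie in the degree-$6$ Bernstein class, and have the same image (an arc of $y = x^3$), yet their control polygons (with $x$-coordinates $i/6$ versus $i(i-1)/30$) are neither equal nor reverses of each other; the relating reparametrization $t \mapsto t^2$ is injective on $I$ but is not a M\"obius transformation. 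To make your argument run one needs the global notion of properness from Section \ref{sec-prelim} (birationality of the underlying rational parametrization), after which Theorem \ref{th-fund} forces any relating reparametrization to be M\"obius; one must then still argue that the M\"obius map carries $I$ onto $I$ and that a non-affine M\"obius map takes the parametrization out of the fixed polynomial or spline class. None of these steps appears in your sketch.

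By contrast, the paper does not attempt this rigidity at all: its proof of ``$\Longrightarrow$'' is immediate from affine invariance together with uniqueness of the control polygon within the fixed representation class, i.e., the control polygon is treated as an invariant of the curve segment in that class. You set out to prove what the paper invokes --- a worthwhile strengthening, as the example above shows --- but the central step is missing, and the justification you gesture at for it would fail.
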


\begin{proof}
``$\Longleftarrow$'': By the hypothesis and affine invariance, $\PPP_{\mC_2} = f(\PPP_{\mC_1}) = \PPP_{f(\mC_1)}$, implying $\bfx_2 = f\circ \bfx_1$ by the uniqueness of the control polygon.

``$\Longrightarrow$'': By the hypothesis, $f\circ \bfx_1$ and $\bfx_2$ have identical image segments. By affine invariance and uniqueness of the control polygon, it therefore follows that $f(\PPP_{\mC_1}) = \PPP_{f(\mC_1)} = \PPP_{\mC_2}$.
\end{proof}

Notice that checking whether or not two polygons in $\RR^3$ are similar is straightforward. Hence, the characterization in Theorem \ref{thm:similarity-segments} can be tested easily. Next we consider the three examples mentioned at the beginning of this section. In each example it is well known that the basis functions are linearly independent and form a partition of unity.

\begin{example}[B\'ezier curves]
If, in \eqref{eq:CurveSegment2}, we take $\ell = m$ and
\[ B_{i,m}(t) := {m\choose i} t^i (1-t)^{m-i},\qquad i = 0,\ldots,m \]
the \emph{Bernstein polynomials} of degree $m$, we obtain the class of \emph{B\'ezier curves} of degree $m$.
Symmetries of B\'ezier curves are studied in \cite{SanchezReyes}.

\end{example}

\begin{example}[B-spline curves]
Suppose $\bft =(t_0,t_1,\ldots, t_{m+\ell+1})$ is a fixed \emph{open knot vector}, i.e., $\ell \ge m$ and $\bft$ takes the form
\[ a = t_0 = \cdots = t_m < t_{m+1} \leq \cdots \leq t_\ell < t_{\ell+1} = \cdots = t_{m+\ell+1} = b,\]
where $t_i < t_{i+m+1}$ for $i = 0, \ldots, \ell$, or more generally an \emph{$(m+1)$-extended} knot vector \cite{Curry.Schoenberg66}. Consider the B-spline basis functions defined recursively by
\[
B_{i,0}(t) = \left\{
\begin{array}{ll}
1, & t_i\leq t< t_{i+1},\\
0, & \text{otherwise},
\end{array}
\right.
\]
and
\[ B_{i,m}(t) = \frac{t - t_i}{t_{i+m} - t_i} B_{i,m-1}(t) + \frac{t_{i+1+m} - t}{t_{i+1+m} - t_i} B_{i+1,m-1}(t), \quad m\geq 1,\ 0\leq i\leq \ell. \]
with the convention that each coefficient is zero when its numerator is zero (also when the denominator is zero). Then \eqref{eq:CurveSegment2} defines a B-spline curve segment on the interval $I = [t_0, t_{m+\ell+1}]$.
\end{example}

\begin{example}[NURBS curves]
Next, let $B_{0,m},\ldots, B_{\ell,m}$ be the B-splines defined above for a given open knot vector $\bft =(t_0,t_1,\ldots, t_{m+\ell+1})$, and let $\omega_0,\ldots, \omega_\ell > 0$ be certain corresponding \emph{weights}. Substituting ``$B_{i,m}$'' in \eqref{eq:CurveSegment2} by the \emph{non-uniform rational B-splines (NURBS)}
\begin{equation}\label{eq:NURBS}
\frac{\omega_j B_{i,m}}{\sum_{j = 0}^\ell \omega_j B_{j,m}},\qquad 0\leq i\leq \ell,
\end{equation}
yields a NURBS curve segment. While such curves are in general rational, the polynomial B-spline segments appear as the special case $\omega_0 = \cdots = \omega_\ell = 1$.
\end{example}

However, Theorem \ref{thm:similarity-segments} cannot be applied when we want to compare two B-spline or NURBS curves with \emph{different} knot vectors or \emph{different} weights. In such a more general setting, we are unaware of a simple characterization of similarity in terms of the control polygons, the knot vectors and the weights; therefore, we pose it here as an open problem.

\section{Conclusion}\label{sec:conclusion}
\noindent We have presented a deterministic algorithm for deciding whether any two rational space curves are related by a similarity, and for determining the similarity in this case. The algorithm exploits the relationship between the curvatures and torsions of two similar space curves and extends the results of \cite{AHM15}, where the problem of detecting the symmetries of rational space curves was addressed. Interestingly, unlike for symmetry detection, it is necessary to distinguish the cases of non-helical and helical curves. In the first case, the experimentation performed so far shows that the algorithm is useful for curves of medium degrees or bitsizes. In the second case, the algorithm is useful for polynomial helices of low degree. 

When the similarity ratio is a rational number, the bottleneck of the algorithm, both for non-helical and helical curves, is the computation of the similarity ratio. This operation depends on the computation of certain resultants in the non-helical case, and on Gr\"obner bases elimination in the helical case, which becomes time-consuming as the degrees or bitsizes grow. When the similarity ratio is irrational, we need to work in an algebraic extension field, and then the computation of the M\"obius transformations may take more time than the computation of the similarity ratio. 

After concluding our discussion on global curves, we briefly considered the similarity of curve segments. For a space of properly parametrized curves satisfying affine invariance and uniqueness of the control polygon, we show that detecting similarity of such curve segments reduces to detecting similarity of the control polygons. We end with three examples of such curve segments, namely B\'ezier curves, B-spline curves of \emph {fixed} degree $m$ for a \emph{fixed} $(m+1)$-extended knot vector, and NURBS for such a knot vector and \emph{fixed} positive weights.

Future work includes seeking alternatives for finding the similarity ratio, as well as detecting symmetries and similarities of implicitly defined algebraic space curves. Additionally, it would be interesting to find alternatives for the special, but important, case of bounded space curves. Finally, the problem of finding a simple condition for the similarity of B-spline curves or of NURBS curves in the more general setting, i.e., with possibly different knot vectors or different weights,is also left as a pending question.

\begin{bibdiv}
\begin{biblist}

\bib{A12}{article}{
   author={Alc{\'a}zar, Juan Gerardo},
   title={Computing the shapes arising in a family of space rational curves
   depending on one parameter},
   journal={Comput. Aided Geom. Design},
   volume={29},
   date={2012},
   number={6},
   pages={315--331},
   issn={0167-8396},
}

\bib{AHM14}{article}{
   author={Alc{\'a}zar, Juan Gerardo},
   author={Hermoso, Carlos},
   author={Muntingh, Georg},
   title={Detecting similarity of rational plane curves},
   journal={J. Comput. Appl. Math.},
   volume={269},
   date={2014},
   pages={1--13},
   issn={0377-0427},
}

\bib{AHM14-2}{article}{
   author={Alc{\'a}zar, Juan Gerardo},
   author={Hermoso, Carlos},
   author={Muntingh, Georg},
   title={Detecting symmetries of rational plane and space curves},
   journal={Comput. Aided Geom. Design},
   volume={31},
   date={2014},
   number={3-4},
   pages={199--209},
   issn={0167-8396},
}

\bib{AHM15}{article}{
   author={Alc{\'a}zar, Juan Gerardo},
   author={Hermoso, Carlos},
   author={Muntingh, Georg},
   title={Symmetry detection of rational space curves from their curvature
   and torsion},
   journal={Comput. Aided Geom. Design},
   volume={33},
   date={2015},
   pages={51--65},
   issn={0167-8396},
}

\bib{AHM16}{article}{
   author={Alc{\'a}zar, Juan Gerardo},
   author={Hermoso, Carlos},
   author={Muntingh, Georg},
   title={Detecting similarities of rational space curves},
   book={
      title={Proceedings of the ACM on International Symposium on Symbolic and Algebraic Computation},
      series={ISSAC '16},
      publisher = {ACM},
      isbn = {978-1-4503-4380-0},
   },
   year = {2016},
   pages = {23--30},
   url = {http://doi.acm.org/10.1145/2930889.2930892},
}

\bib{ank99}{article}{
  author={Ankerst, Mihael},
  author={Kastenm{\"u}ller, Gabi},
  author={Kriegel, Hans-Peter},
  author={Seidl, Thomas},
  title={3D shape histograms for similarity search and classification in spatial databases},
  conference={
      title={6th International Symposium on Spatial Databases (SSD 99)},
      address={Hong Kong},
      date={July 1999},
  },
  book={
      title={Advances in Spatial Databases},
      publisher={Springer},
      series={Lecture Notes in Computer Science},
      volume={1651},
  },
  pages={207--226},
  date={1999},
}

\bib{B06}{article}{
  author={Bustos, Benjamin},
  author={Keim, Daniel A.},
  author={Saupe, Dietmar},
  author={Schreck, Tobias},
  author={Vrani\'{c}, Dejan V.},
  title={Feature-based Similarity Search in 3D Object Databases},
  journal = {ACM Comput. Surv.},
  date={December 2005},
  volume = {37},
  number = {4},
  pages = {345--387},
}

\bib{Cartan}{book}{
   author={Cartan, \'Elie},
   title={La m\'ethode de rep\'ere mobile, la th\'eorie des groupes continus et les espaces g\'en\'eralis\'ees},
   series={Expos\'es de ge\'eom\'etrie, 5; Actualit\'es scientifiques et industrielles, No. 194},	
   place={Paris},
   publisher={Hermann et Cie, editeurs},
   date={1935},
}

\bib{Chen}{article}{
  author={Chen, Ding-Yun},
  author={Tian, Xiao-Pei},
  author={Shen, Yu-Te},
  author={Ouhyoung, Ming},
  title={On Visual Similarity Based 3D Model Retrieval},
  journal = {Computer Graphics Forum},
  volume = {22},
  number = {3},
  publisher = {Blackwell Publishing, Inc},
  issn = {1467-8659},
  url = {http://dx.doi.org/10.1111/1467-8659.00669},
  pages = {223--232},
  date = {2003},
}

\bib{Coxeter}{book}{
   author={Coxeter, Harold S.M.},
   title={Introduction to geometry},
   edition={2},
   publisher={John Wiley \& Sons, Inc., New York-London-Sydney},
   date={1969},
   pages={xviii+469},
}

\bib{Curry.Schoenberg66}{article}{
   author={Curry, H.B.},
   author={Schoenberg, I.J.},
   title={On P\'olya frequency functions IV: The fundamental spline functions and their limits},
   journal={J. Analyse Math.},
   volume={17},
   date={1966},
   pages={71--107}
}

\bib{Docarmo}{book}{
   author={do Carmo, Manfredo P.},
   title={Differential geometry of curves and surfaces},
   note={Translated from the Portuguese},
   publisher={Prentice-Hall, Inc., Englewood Cliffs, N.J.},
   date={1976},
}

\bib{Farouki}{book}{
   author={Farouki, Rida T.},
   title={Pythagorean-hodograph curves: algebra and geometry inseparable},
   series={Geometry and Computing},
   volume={1},
   publisher={Springer, Berlin},
   date={2008},
   pages={xvi+728},
}

\bib{Goldman}{book}{
   author={Goldman, Ron},
   title={An integrated introduction to Computer Graphics and Geometric Modeling},
   publisher={CRC Press},
   date={2009},
}

\bib{H01}{article}{
  author={Hilaga, Masaki},
  author={Shinagawa, Yoshihisa},
  author={Kohmura, Taku},
  author={Kunii, Tosiyasu L.},
  title={Topology Matching for Fully Automatic Similarity Estimation of 3D Shapes},
  conference={
      title={},
      address={},
      date={},
  },
  book={
      title={Proceedings of the 28th Annual Conference on Computer Graphics and Interactive Techniques},
      publisher={ACM},
      series={SIGGRAPH '01},
  },
  pages={203--212},
  date={2001},
}

\bib{koertgen}{collection.article}{
  author={K\"ortgen, Marcel},
  author={Novotni, Marcin},
  author={Klein, Reinhard},
  title={3D shape matching with 3D shape contexts},
  conference={
      title={The 7th Central European Seminar on Computer Graphics},
      address={Budmerice, Slovakia},
      date={April 2003},
  },
  pages={},
  date={},
}

\bib{osada}{article}{
  author={Osada, Robert},
  author={Funkhouser, Thomas},
  author={Chazelle, Bernard},
  author={Dobkin, David},
  title={Matching 3D models with shape distributions},
  conference={
      title={International Conference on Shape Modeling and Applications},
      address={},
      date={May 2001},
  },
  book={
      title={Shape Modeling and Applications},
      publisher={},
      series={},
  },
  pages={154--166},
  date={2001},
}

\bib{Patrik}{book}{
   author={Patrikalakis N.M.},
	 author={Maekawa T.},
   title={Shape Interrogation for Computer Aided Design and Manufacturing},
   publisher={Springer, Berlin},
   date={2002},
}

\bib{SanchezReyes}{article}{
  author={S\'anchez-Reyes J.},
  title={Detecting symmetries in polynomial B\'ezier curves},
  journal = {Journal of Computational and Applied
Mathematics},
  volume = {288},
  pages = {274--283},
  date = {2015},
}

\bib{Fatma}{article}{
  author={\c{S}eng\"uler-\c{C}ift\c{c}i, Fatma},
  title={Geometry of rational helices and its applications},
  eprint={http://arxiv.org/abs/1306.3689},
  date={2014}
}

\bib{SWPD}{book}{
   author={Sendra, J. Rafael},
   author={Winkler, Franz},
   author={P{\'e}rez-D{\'{\i}}az, Sonia},
   title={Rational algebraic curves},
   series={Algorithms and Computation in Mathematics},
   volume={22},
   publisher={Springer, Berlin},
   date={2008},
   pages={x+267},
   isbn={978-3-540-73724-7},
}

\bib{Sundar}{article}{
  author={Sundar, Hari},
  author={Silver, Deborah},
  author={Gagvani, Nikhil},
  author={Dickinson, Sven},
  title={Skeleton Based Shape Matching and Retrieval},
  conference={
      title={},
      address={},
      date={},
  },
  book={
      title={Proceedings of the Shape Modeling International 2003},
      series={SMI '03},
      publisher = {IEEE Computer Society},
      address = {Washington, DC, USA},
  },
  pages = {130--139},
  date={2003},
}

\bib{winkler}{book}{
   author={Winkler, Franz},
   title={Polynomial algorithms in computer algebra},
   series={Texts and Monographs in Symbolic Computation},
   publisher={Springer-Verlag, Vienna},
   date={1996},
   pages={viii+270},
   isbn={3-211-82759-5},
}
\end{biblist}
\end{bibdiv}

\end{document}